\title{A Framework for Blockchain Architecture Design}
\author{Partha S.~Dey \and Aditya S.~Gopalan}
\date{}
\begin{document}
\maketitle

\begin{abstract}
Emerging applications of blockchains, such as grocery supply chains, require frequent updates to the data structure.
This is in contrast with typical analyses of the Bitcoin blockchain, in which updates occur infrequently.
With more frequent updates, the spread of blocks among participants in the blockchain protocol becomes complicated; thus, the structure of the blockchain data structure itself can differ significantly from the structure without the presence of network delays. In addition, emerging blockchain applications such as internet-of-things or supply chain warrant different architectures of the blockchain data structure and so one needs a general understanding of how the data structure works rather than focusing on the specific architecture of Bitcoin.
In this paper, we develop a new model to study the dynamics of the blockchain data structure in the presence of i.i.d.~network delays.
Specifically, we consider an asymptotic design criterion called \emph{one-endedness} which should be satisfied by all blockchain architectures.
We develop techniques to show that the one-endedness property holds for some of the leading blockchain architectures.
\end{abstract}

\section{Introduction}\label{sec:introduction}

Recent interest in blockchains has come from a variety of applications, such as monetary currencies, internet of things, and supply chains.
In all of these applications, \emph{consensus} between the participating parties is the key principle.
It is natural that as blockchain technology is developed further, different architectures and structural properties may be required for different applications.
Nevertheless, a unifying principle of how blockchains achieve consensus would allow for the development of general frameworks for operating and design of blockchains.
In this paper, we study one such asymptotic principle called \emph{one-endedness} of a limiting (in time) blockchain and verify that one-endedness holds for the Bitcoin and Iota blockchains, which are two fundamentally different blockchain architectures.
Both can be analyzed using similar techniques that we develop in this paper.

The Bitcoin blockchain is the canonical blockchain for monetary currencies, and the Iota blockchain is a leading blockchain for internet-of-things.
Despite significant recent interest in applying blockchains to supply chains~(\cite{keskin2024blockchain, iyengar2023economics, iyengar2024blockchain, cui2024supply}), no supply-chain-specific blockchain architecture has been proposed. The techniques and unified framework developed in this paper to verify one-endedness provide an important guideline for the development of blockchain architectures for supply chains and other applications.

\subsection{Blockchain Background}
Blockchain protocols comprise of two parts: a peer-to-peer communication network and a directed acyclic graph called the \emph{blockchain data structure}.
We discuss these separately.

Nodes in the peer-to-peer network create blocks and subsequently communicate these blocks to all other nodes.
These blocks contain lists of application-specific information within their contents, but in this paper we will treat blocks as atomic units.
In the Bitcoin blockchain, the creation of blocks is very slow compared to their propagation - in queueing language, this corresponds to sending the arrival rate to zero.

However, in emerging applications of blockchains, such as grocery supply chains, frequent and granular information is more useful than longer lists of information which are updated more infrequently~(\cite{keskin2024blockchain}).
Thus, one also needs to have analyses of blockchain operation in more traditional regimes where the arrival rate is not close to zero.

Recent papers by~\cite{gopalan2020stability} and~\cite{gopalan2021data} investigate the performance of the peer-to-peer network as it is implemented in Bitcoin.
Viewing the peer-to-peer network as the server in a queue, they derive stability conditions for the block creation rate.
They also study the propagation of blocks in the peer-to-peer network.
Different blocks propagate in a highly non-independent way.
In this more general context, the traditional assumptions made in Bitcoin analyses that either (1) one block is fully propagated across the network before the next is created, or (2) block propagation times are uniformly bounded, both fail to hold.

The blockchain data structure is comprised of the blocks created by the nodes in the peer-to-peer network.
Specifically, the blockchain data structure is a graph where the blocks are vertices.
Each block contains (at least) one \emph{reference}, or directed edge, to one (or more) previous block(s).
It is easy to see that the blockchain data structure is a directed acyclic graph.
Throughout this paper, we treat directed edges as moving \emph{from older blocks to newer blocks}.
A blockchain's architecture is a pre-specified \emph{attachment rule} $f$ to determine how new blocks should attach to the existing blockchain.
Let $G_t := G_t(f)$ be the blockchain data structure comprising of the first $t+1$ blocks.
For mathematical convenience, we treat $G_0$ as a single root vertex with no edges, and we assume that this root vertex is also the root of $G_t$ for all $t > 0$.
Due to the complicated block propagation behavior exhibited in~\cite{gopalan2020stability, gopalan2021data}, it is generally \emph{not} true that $G_{t+1} = f(G_t)$.
This can be seen, for example, when one node creates a new block $b$, and another node creates a new block before hearing of block $b$ via peer-to-peer network communication.
This behavior is especially relevant in settings like grocery supply chains, where updates need to occur frequently.
Any unified model for blockchain analysis, design, or operation, must account for this challenge with an explicit characterization of network delay.

\subsection{Model Overview}
In this paper, we introduce a new model which isolates the dynamics of the blockchain data structure while also incorporating the effect of network delay.
Our model evolves in discrete time and may be taken by, \textit{e.g.}, discretizing a continuous-time block arrival process on the epochs of the arrivals.
Specifically, we will consider dynamics of the form 
\begin{equation}
    G_{t+1} = f(G_t, G_{t - \xi_t}),
    \label{eq:acm-recursion}
\end{equation}
where $(\xi_t)_t$ is a sequence of i.i.d. \emph{network delays}.
We refer to Equation~(\ref{eq:acm-recursion}) as an \emph{asynchronous recursion} to highlight the effect of the network delay.
When $\xi_t = 1$ for some time $t$, we have that $G_{t+1} = f(G_t)$.
We use the term \emph{synchronous} to specifically refer to the setting where $\pr(\xi_1 = 1) = 1$, and the term \emph{asynchronous} to specifically refer to the setting where $\pr(\xi_1 = 1) < 1$.

We will sometimes use $G_t(f)$ instead of $G_t$ to denote that the model is with respect to attachment function $f$ where it helps readability.
The details required to carefully specify the model, as well as the graph limits as $t \to \infty$, are technical and deferred to Section~\ref{sec:defs-main}.

\subsection{Consensus in Blockchains}
Consensus in blockchains is determined on a block-by-block basis.
In this paper, we say that a block is \emph{confirmed} if it can be determined from the blockchain data structure that consensus has occurred for that block.

Due to its prominence in the literature, we begin by discussing consensus and block confirmation in Bitcoin.
Bitcoin's attachment function $f_\mathrm{Nak}$ creates a tree: each arriving block attaches to a single leaf in the tree of maximal hop distance from the root, with any ties broken uniformly at random.
The implicit idea in the Bitcoin whitepaper, as well as most analyses of Bitcoin, is that there should only be a single confirmed block at any given hop distance $h$ from the root.
Since the arrival of new blocks in Bitcoin is very slow compared to block propagation, heuristics such as ``treat a block $b$ as confirmed if there exists a path of length at least $k$ from $b$ some leaf'' are often used in practice.
This heuristic is often used as the definition of confirmation (see, \textit{e.g.}, its use in~\cite{guo2022bitcoin}).

To this point, we make three important remarks which inspire the alternative approach taken in this paper.
First, the heuristic is an approximation of the idea that in the limit as $t \to \infty$, only one block at each hop distance $h$ from the root should have infinitely many children.
This is the block that should be confirmed.
Second, using $f_\mathrm{Nak}$ in the asynchronous setting of our model, it will frequently happen that a block achieves the heuristic (for any fixed $k$), but nevertheless only has finitely many children in the limiting graph.
Under the heuristic, taking the first and second points together leads to contradictory notions.
Third, this heuristic is specific to $f_\mathrm{Nak}$ and it is difficult to generalize it to any other attachment rule $f$.

These suggest that one should take an asymptotic (as $t \to \infty$) approach to determining which blocks are confirmed.
One such approach, which we adopt in this work, is that of~\cite{gopalan2020stability}.
The key to their approach is to take a subtly different view of the limiting heuristic for Bitcoin's block confirmation: any confirmed block should have a directed path to all but finitely many newer blocks in the blockchain.
Since Bitcoin's blockchain is a tree, these two ideas are equivalent for Bitcoin; they are generally not equivalent for an arbitrary attachment rule $f$.

Treating a confirmed block as one that has a directed path to all but finitely many newer blocks in the blockchain,~\cite{gopalan2020stability} use the idea of \emph{one-endedness} of the limiting blockchain graph, which is a technical condition implying the existence of infinitely many confirmed blocks.
They establish the one-endedness property for two simple blockchain attachment rules, including Bitcoin's $f_\mathrm{Nak}$.
We note that the model in this paper is different than that of~\cite{gopalan2020stability} and so the one-endedness property for $f_\mathrm{Nak}$ needs to be re-established.

We note that one-endedness is only a sufficient condition for the existence of infinitely many confirmed blocks; the necessary and sufficient condition in Corollary~\ref{cor:confirmed} is expressed in terms of one-endedness of a spanning tree and is cumbersome to work with.
Generally, a lack of one-endedness should be thought of as having only finitely many confirmed vertices as $t \to \infty$, which defeats the purpose of the blockchain data structure.

\subsubsection{Implications of One-Endedness in Blockchain Applications}
\label{sssec:one-ended-implications}
In this section only, blocks are not atomic and we discuss their internal contents.
In a blockchain, having a path from the root block to another block indicates that along that path, there is no conflicting information contained in any blocks.
However, two different blocks $b_1$ and $b_2$ could store conflicting information.
In this case, the protocol should confirm at most one of these.

The one-endedness property implies that there are no infinite paths with conflicting information.
In fact, the one-endedness property also implies that for any two infinite paths $p_1$ and $p_2$, there are no blocks in $p_1$ which conflict with any block in $p_2$.
In other words, all such conflicts are resolved within a finite time.
We note that the finiteness guarantee of such conflict resolution occurs despite the fact that determination of one-endedness requires the $t\to\infty$ limit.
When designing a new blockchain architecture for new applications such as supply chains, it is therefore a crucial step to determine that the limiting blockchain is indeed one-ended.

The one-endedness property is a more general property which is not a generalization of any particular behavior of $f_\mathrm{Nak}$.
Using our model, the benefit thereof is a unified framework for establishing successful consensus for a variety of blockchain architectures.

As mentioned before, the one-endedness property can be approximated with heuristics for the Bitcoin blockchain since it operates in the inefficient regime where the arrival rate is almost zero.
However, even for $f_\mathrm{Nak}$, it is difficult to apply those heuristics in more standard operating regimes where the arrival rate is larger.
As mentioned above, it is also difficult to generalize these heuristics to other attachment rules.

\subsection{Overview of Main Results}

Recall that by \emph{asynchronous}, we refer to the setting where $\pr(\xi_1 = 1) < 1$.
We use the term \emph{asynchronous limit} to refer to the limit $G_\infty(f) := \lim_{t \to \infty}G_t(f)$ in the asynchronous setting with attachment rule $f$.

Our main results concern the development of techniques to determine the one-endedness of an asynchronous limit.
We focus on Bitcoin's $f_{Nak}$ and the set of functions $f_1, f_2, \ldots, f_\infty$ which are related to the Iota blockchain. Specifically, the Iota blockchain is widely modeled using $f_2$~(\cite{king2021fluid, feng2023fluid}).
For finite $k$, the function $f_k$ is such that the new block attaches to a set of $k$ leaves chosen uniformly at random without replacement (and attaches to all leaves if there are less than $k$ of them).
The function $f_\infty$ attaches the new block to all leaves.

Our main results can be summarized as follows. For careful statements of all results, we refer the reader to Section~\ref{ssec:main}.

The first two results are about the Bitcoin architecture $f_\mathrm{Nak}$.
\begin{theorem}
    The asynchronous limit $G_\infty(f_\mathrm{Nak})$ exists a.s. and is one-ended a.s..
\end{theorem}
\begin{theorem}
    The fraction of confirmed blocks in $G_\infty(f_\mathrm{Nak})$  is given by $\frac{1}{\E \chi}$, where $\chi$ is a random variable with law $\pr(\chi \geq k) = \prod_{i=1}^{k}\pr(\xi_1 \geq i)$.
\end{theorem}

The remaining results are for the Iota architecture $f_2$ and some other related architectures.
\begin{theorem}
    The asynchronous limit  $G_\infty(f_1)$ exists a.s. and is not one-ended a.s..
\end{theorem}
We note that, intuitively, the function $f_1$ is a poor candidate for a blockchain architecture because due to network delay, it may result in two infinite paths which only intersect at finitely many vertices, which is what is described in this result.
Nevertheless, the architecture $f_1$ is a useful tool for proving the following:
\begin{theorem}
    The asynchronous limits  $G_\infty(f_k)$, $2 \leq k \leq \infty$, exist a.s. and are one-ended a.s..
\end{theorem}
\begin{theorem}
    The following distributional convergence holds on an appropriate space:
    \[
		\begin{tikzcd}
			G_t(f_k) \arrow{r}{k\to\infty} \arrow{d}{t\to\infty} &G_t(f_\infty) \arrow{d}{t\to\infty} \\
			G_\infty(f_k)\arrow{r}{k\to\infty} &G_\infty(f_\infty)
		\end{tikzcd}
	\]
\end{theorem}

\subsection{Related Work}
We discuss related work in two contexts: (1) delay modeling in blockchains, and (2) properties and dynamics of the blockchain data structure.

There is a large literature, primarily concerned with analyses of Bitcoin, wherein a uniform bound on the network propagation delay of different blocks is assumed (see, \textit{e.g.},~\cite{dembo2020everything} and the references therein).
The first challenge with using this type of model is that it does not assume any particular structure of the network delay, other than the uniform bound.
This model structure is therefore unsuitable for the study of the dynamics of the blockchain data structure itself, outside of limited situations such as $f_\mathrm{Nak}$.
The second is that, as implemented, the node-to-node propagation delay is unbounded in the implementation of Bitcoin; and furthermore, the propagation dynamics depend on the peer-to-peer network size~(\cite{gopalan2020stability, gopalan2021data}). 
Thus, no uniform upper bound can be reasonably attained.
Our model handles unbounded delays, and can be extended with time-varying delay measures to handle dynamic network sizes and structures.

The works of ~\cite{gopalan2020stability} and~\cite{gopalan2021data} model the delays incurred by the peer-to-peer network as they are implemented in Bitcoin.
However, the state-spaces involved in analyzing those models with more general attachment functions than the ones considered in~\cite{gopalan2020stability} are untenable for analysis, and thus a new model is required to isolate the dynamics of the blockchain data structure from those of the peer-to-peer network.
Nevertheless, the results in this paper can be extended to the models considered in~\cite{gopalan2020stability} and~\cite{gopalan2021data} and we discuss the appropriate steps in Section~\ref{sec:disc}.

Some recent works have focused on the analysis of the Iota blockchain protocol and more specifically, the $f_2$ attachment rule.
\cite{king2021fluid} studies the dynamics of the number of leaves in the Iota blockchain.
However, this analysis does not provide detailed information about the blockchain data structure itself and thus does not inform how consensus is achieved in the Iota blockchain.

The one-endedness condition for blockchains was introduced in~\cite{gopalan2020stability}.
While it has been identified as an important topic for blockchain design (\cite{king2022some}), it is a difficult condition to work with.
In this article, we develop some techniques to determine the one-endedness of a limiting blockchain.

\subsection{Organization of the Paper}\label{ssec:intro-organization}

The remainder of this paper is structured as follows.
In Section~\ref{sec:defs-main}, we state our main results and the requisite definitions which we use in this paper.
We also describe the notation in this paper.
In Section~\ref{sec:disc}, we discuss our results and some directions for future work.
Proofs and other technical results are in the appendix.

\section{Definitions and Main Results}\label{sec:defs-main}

For the rest of this paper, the term~\emph{graph} always refers to a directed acyclic graph (DAG).

\subsection{Assumptions}
We use $\xi$ to refer random variables distributed identically to $\xi_1$ for clarity of presentation.
We will assume the following throughout the rest of the article:
\begin{itemize}
	\item $\pr(\xi\ge 1)=1$ and $\E \xi^{1+\gd} < \infty$ for some $\gd>0$,
\end{itemize}

\subsection{Notation}\label{ssec:defs-notation}
For the rest of the article, we will follow the notations enumerated below for easy reference.

\begin{itemize}
	\item For any real numbers $x, y$, we denote: $x \wedge y = \min(x, y);\
		 x \vee y = \max(x, y)$ and $x_+ = x \vee 0;\
		 x_- = (-x) \vee 0$.
	\item For a graph $G = (V, E)$, we use the notation $u \to v$ if there is a directed path from the vertex $u$ to the vertex $v$ in $G$.
	 It is clear from the definition of the asynchronous composition model that for any vertex $u$, $u \not \to u$.

	\item $\cbs (\obs)$ denote the set of all rooted, connected DAGs with finitely (infinitely) many vertices.
    All graphs in $\obs$ are locally finite.
	\item $\mvxi := (\xi_t)_{t\ge 1}$ is the i.i.d. driving sequence of delays.
	 We use the notation $\mvxi_i^j := (\xi_i, \xi_{i+1}, \ldots$, $\xi_j)$ for $1 \le i \le j \le \infty$.
	\item $\cF_t := \sigma(\xi_1, \ldots, \xi_t, \theta_1, \ldots, \theta_t)$ is the $\sigma$-algebra generated by the trajectories up to time $t$.
    Here, $(\theta_t)_t$ are i.i.d. $\mathrm{Unif}(0, 1)$ random variables used to set the random behavior of an attachment function $f$ - \textit{e.g.} to break ties in $f_\mathrm{Nak}$.
	\item If needed, we will use the notation $G_t(f)$ instead of $G_t$ to emphasize that the asynchronous composition is with respect to the function $f$.

	\item We denote by $(\gt_k)_{k\ge 1}$ the sequence of regeneration times as given in Definition~\ref{def:regeneration-time}, with $\gt_1\ge 0$ being the first regeneration time after time $0$.
	\item We denote by $\gc_i := \gt_{i+1} - \gt_{i}$ and $\tilde{\gc}_i := \tilde{\gt}_{i+1} - \tilde{\gt}_{i}$ for $i\ge 1$.

	\item We will use the calligraphic letter $\cA_t$ to denote a set at time $t$, and the corresponding roman letter $A_t$ to denote the cardinality of that set.
	 We will use the corresponding notation $\hat{\cA}_k=\cA_{\gt_{k}}$ to denote the same set at the $k$-th regeneration time, along with the corresponding notation $\hat{A}_k$. We also use the notation $\tilde{\cA}_k = \cA_{\tilde{\gt}_k}$ to denote the same set at the first instant of the $k$-th regeneration interval, along with the corresponding notation $\hat{\tilde{A}}_k$.

	\item We introduce the following:
	 \begin{itemize}
		 \item $\cL(G)$ denotes the set of leaves in the graph $G$, and $L(G)$ its size.
		 \item $\cI_{t, s} := \cL(G_s) \cap \cL(G_t)^c \text{ for }t \ge s,$ is the set of leaves at time $s$ which are not leaves at time $t$.
		 $I_{t, s}$ is the size of $\cI_{t, s}$.
	 \end{itemize}

	\item We will use the shorthand $\cL_{t},L_{t}$ for $\cL(G_{t}), L(G_{t})$, respectively. Similarly, we will use $\hat{\cL}_{k}, \hat{L}_{k}$, $\tilde{\cL}_k$, $\tilde{L}_k$ for $\cL(G_{\gt_k}), L(G_{\gt_k}), \cL(G_{\tilde\gt_k}), L(G_{\tilde\gt_k})$, respectively.
\end{itemize}

\subsection{Infinite Graphs: Definitions, One-Endedness, and Blockchains}\label{ssec:defs-infinite-graphs}

As noted in the introduction, we take an asymptotic approach to block confirmation in this paper.
In this section we make precise the sense in which temporal limits exist.
We also define the notion of ends of an infinite graph.
Since we use an asymptotic definition of block confirmation, we note that the foremost criterion of interest is to guarantee the existence of infinitely many confirmed blocks.
To do this, we explicitly characterize the number of ends in terms of the existence of infinitely many confirmed blocks.

A graph $G = (V, E)$ is infinite if $V$ is infinite. An infinite graph $G$ is~\emph{locally finite} if all vertices $v \in V$ have finite degree.

\subsubsection{The Spaces $\cbs$ and $\obs$}
In this section, we introduce the space $\cbs$ as well as the space $\obs$, in which all graph limits in this paper exist.
Specifically, let $\cbs$ be the set of all rooted, finite, connected DAGs, and let $\obs$ be the set of all rooted, locally finite, connected DAGs.
We endow $\cbs$ with the metric $d_*$, defined as follows.
\begin{definition}[{\cite[Chapter 2]{aldous2007processes}}]
	The function
	$d_*(G_1, G_2) := \frac1{1+s}$,
	where $s$ is the supremum of all integers such that the $s$--balls w.r.t.~the hop distance centered at the roots of $G_1$ and $G_2$ agree, is a metric on $\cbs$.
\end{definition}
It is easily checked (see~\cite{aldous2007processes}) that $\obs$ is the closure of $\cbs$ under $d_*$ and that $(\obs, d_*)$ is a complete metric space.

\subsubsection{Ends in Infinite Graphs}
\label{sssec:ends}
In this section, we introduce the notion of \emph{ends} of an infinite graph.
We discuss the interpretation of these definitions separately in Section~\ref{ssec:defs-infinite-blockchain}.

A \emph{ray} is a semi-infinite directed path in an infinite graph $G\in \obs$.

\begin{definition}[{See~\cite{halin1964unendliche}}]\label{def:eqv}
	Two infinite rays $r_1$ and $r_2$ in $G$ are~\emph{equivalent} if there exists a third infinite ray $r_3$ such that $|r_1 \cap r_3| = |r_2 \cap r_3| = \infty$, where the intersection is taken over vertices.
\end{definition}

It is readily checked that being equivalent defines an equivalence relation on the set of rays in a fixed infinite graph $G \in \obs$.

\begin{definition}[{See~\cite{halin1964unendliche}}]
	The graph $G$ is~\emph{$n$-ended} if the equivalence relation in Definition~\ref{def:eqv} separates infinite rays of $G$ into $n$ equivalence classes; each class is called an~\emph{end}.
	If there is only a single equivalence class, $G$ is~\emph{one-ended}.
    If there are infinitely many equivalence classes, $G$ has infinitely many ends.
	If there are no infinite rays, $G$ has~\emph{$0$ ends}.
\end{definition}

We use the convention that any finite graph has $0$ ends. Moreover, due to K\"onig's Lemma, any locally finite infinite graph has at least one end.
From this definition, it is clear that the number of ends in an infinite graph cannot be inferred from the properties of any finite subgraph.


\subsubsection{Infinite Graphs and Blockchain}
\label{ssec:defs-infinite-blockchain}
In this section, we provide interpretations of the definitions in Section~\ref{sssec:ends} and an explicit characterization of one-endedness in terms of confirmed blocks.

First, note that (asymptotically), successful operation of a blockchain protocol requires that for any arriving vertex $v$, all rays from the root to $v$ are equivalent; this implies that there are no conflicting information contained in any block along any path from the root to $v$.
Thus, equivalency of these rays implies consistency of the information in the blockchain.
With this interpretation in mind, one should also require that \emph{all} infinite rays are equivalent; so that (asymptotically) all rays to any arriving vertex are conflict-free (we refer the reader to the discussion in Section~\ref{sssec:one-ended-implications}).

\begin{definition}
    A vertex $v$ in the (synchronous or asynchronous) limit of the function $f$ is~\emph{confirmed} if $v \to w$ for all but finitely many $w > v$.
\end{definition}

We state a lemma from~\cite{gopalan2020stability} which identifies crucial properties of limiting blockchain graphs.
In the interest of self-containedness, we include proof of this lemma.

\begin{lemma}[{\cite[Lemmas 3.4 and 3.5]{gopalan2020stability}}]
	If a graph $G \in \obs$ is one-ended, then it has infinitely many confirmed vertices.
	Conversely, if $G$ has infinitely many confirmed vertices, then there is a one-ended subgraph of $G$ which contains all of the confirmed vertices.
	\label{lem:blockchain-one-ended}
\end{lemma}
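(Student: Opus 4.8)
The plan is to reformulate both implications in terms of the \emph{future cone} $\Phi(v):=\{w : w\to v\}$, the set of vertices that reference $v$. Since every edge strictly decreases the mark, a directed path has strictly decreasing marks and is therefore finite; consequently a ray is an increasing-mark sequence $v_0,v_1,\dots$ with $v_j\to v_i$ for all $j>i$, and König's lemma together with local finiteness shows that a vertex $v$ lies on a ray precisely when $\Phi(v)$ is infinite. In this language $v$ is confirmed exactly when $\Phi(v)$ is cofinite, since its complement consists of the finitely many vertices of mark at most that of $v$ together with the later non-referencers. I would also record the standard consequence of one-endedness that I will lean on: for every finite $F$, the graph $G-F$ has a \emph{unique} infinite component, and because $G$ is connected and locally finite, removing $F=\{0,1,\dots,N\}$ leaves only finitely many vertices outside it, so the unique infinite component $C_N$ is cofinite.

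For the forward implication I would use Lemma~\ref{def:eqv2} in the following sharpened form: since $G$ is one-ended, any two ray-vertices $a,b$ have $\Phi(a)\cap\Phi(b)$ infinite, because the common referencer $v_3$ furnished by the lemma can be pushed to arbitrarily large mark by enlarging $S$. The target is a cofinal sequence of vertices with cofinite future cones. Concretely, for each $N$ I would take the cofinite component $C_N$ and consider its minimal-mark ray-vertex $a_N$; the aim is to show $a_N$ is confirmed and that $a_N\to\infty$, which would produce infinitely many confirmed vertices. The off-cone vertices (the finitely many elements outside $C_N$, plus any later vertices not yet referencing $a_N$) would be absorbed using the common-referencer property applied to $a_N$ and far ray-vertices.

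The main obstacle is precisely this confirmation step. Being a ray-vertex is \emph{not} sufficient for confirmation: one may have $\Phi(v)$ infinite while infinitely many vertices avoid $v$, and the common-referencer property does not by itself force $\Phi(v)$ to be cofinite, since a vertex $v_3$ referencing both $v$ and an off-cone vertex $u$ need not place $u$ into $\Phi(v)$. Thus the delicate point is to isolate the genuine \emph{merge} vertices — those through which all but finitely much of the graph is forced to reference — and to prove there are infinitely many of them. I would attack this by a diagonal argument over the shrinking cofinite components $C_N$, exploiting the DAG orientation (every vertex has an out-path to the root lying inside $C_N$ once $N$ is large) to show that the dominating vertex of the frontier of $C_N$ must be referenced by cofinitely many vertices, and hence is confirmed.

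For the converse, enumerate the confirmed vertices $c_1<c_2<\cdots$. Since each $c_j$ is confirmed, all but finitely many later vertices reference $c_j$; in particular all but finitely many later confirmed vertices do, so the transitive reduction of the reference relation restricted to $\{c_j\}$ yields a cofinal chain in which late confirmed vertices reference earlier ones. I would then define $H$ as the union of all confirmed vertices together with one directed path in $G$ realizing each reference $c_k\to c_j$ in that chain, choosing the paths greedily (for instance shortest) so that $H$ is connected and, by local finiteness, has finite degrees and is thus a locally finite infinite subgraph containing every confirmed vertex. Finally I would verify one-endedness of $H$ from Lemma~\ref{def:eqv2}: the confirmed vertices act as bottlenecks, so after deleting any finite $F\subseteq H$ every infinite component still contains ray-vertices that reference a common deep confirmed vertex, forcing a single infinite component and hence a single end, while all confirmed vertices are retained.
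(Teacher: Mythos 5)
Your forward direction contains a genuine gap, and you have in fact flagged it yourself: the ``confirmation step'' that you defer to an unspecified ``diagonal argument over the shrinking cofinite components'' is exactly the content of the lemma, and the plan you sketch for it (isolating ``merge vertices'' on the frontier of $C_N$) is not carried out and is not obviously carryable. You are right that the common-referencer property, applied to $v$ and a single off-cone vertex $u$, does not put $u$ into $\Phi(v)$. But the correct move is not to treat off-cone vertices one at a time; it is to observe that the whole off-cone set supports a ray. Concretely, let $v$ lie on a ray $p_1$ and let $A=\{w: w\not\to v\}$. If $w\in A$ then every out-neighbour $u$ of $w$ satisfies $u\neq v$ and $u\not\to v$ (otherwise $w\to u\to v$), so $A$ is closed under out-neighbours; consequently every vertex of $A$ reaches the root by a directed path staying inside $A$. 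If $A$ were infinite, local finiteness and K\"onig's lemma (applied to the breadth-first tree of $A$ with edges reversed, rooted at $0$) would produce an infinite ray $p_2$ contained in $A$. One-endedness makes $p_2$ equivalent to $p_1$, so some $y\in p_2$ admits $y\to x$ for a vertex $x\in p_1$ lying beyond $v$, whence $y\to x\to v$ and $y\in\Phi(v)$ --- contradicting $p_2\subseteq A$. Hence $A$ is finite and $v$ is confirmed; every vertex of $p_1$ is confirmed, giving infinitely many. This is what the paper's proof does when it quantifies over \emph{all} rays $p_2$ and then says the claim ``follows since $G$ is locally finite'': local finiteness is what turns an infinite set of non-referencers into an entire ray of non-referencers. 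Your reformulation via cofinite components $C_N$ never supplies this extraction step, and without it the existence of even one confirmed vertex is not established.

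Your converse is essentially sound and is, if anything, more careful than the paper's one-line appeal to Definition~\ref{def:eqv2}: the induced subgraph on the confirmed vertices need not be connected, so adding realizing paths, as you do, is the right repair. Two small points to tighten: make sure every confirmed vertex (not only those on your cofinal chain) is joined by a path into the chain, which is possible since each $c_j$ is referenced by all but finitely many later confirmed vertices; and when verifying one-endedness of $H$, check that the bottleneck paths you invoke stay inside $H$, which holds because rays of $H$ are built from the realizing paths and therefore meet the chain cofinally.
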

\begin{proof}{Proof}
	Suppose that $G$ is one-ended.
	Fix any ray $r_1$; we will show that each vertex contained in $r_1$ is confirmed.
	For any other infinite ray $r_2$, we have a ray $r_3$ which intersects both $r_1$ and $r_2$ infinitely often.
	This implies that for any vertex $v$ in $r_1$, all but finitely many vertices in $r_2$ have a path to $v$.
	This part of the result then follows since $G$ is locally finite.

	Next, suppose that $G$ has infinitely many confirmed vertices and denote by $\widehat{G}$ the subgraph of the confirmed vertices.
	Consider two infinite rays $r_1, r_2$ in $\widehat{G}$.
    We will show that $r_1$ and $r_2$ are equivalent.
    Fix a vertex $v_1 \in r_1$. The result follows if there is a path from $v_1$ to all but finitely many vertices in $r_2$; this latter condition is immediate from the definition of a confirmed vertex.
\end{proof}

We note that a \emph{spanning tree} for a graph $G$ is a subgraph $G' = (V, E')$, where $E' \subseteq E$, the root in $G'$ is the same as the root in $G$, and each (non-root) vertex in $G'$ has a unique path to the root.
We add the following easy corollary, which is a new result:

\begin{corollary}
	A graph $G \in \obs$ has infinitely many confirmed vertices iff it has a one-ended spanning tree.
    \label{cor:confirmed}
\end{corollary}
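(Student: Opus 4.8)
My plan is to prove the two implications separately; the reverse implication is immediate from Lemma~\ref{lem:blockchain-one-ended}, while the forward implication requires building a spanning tree out of the confirmed vertices. For the ``if'' direction, suppose $G$ has a one-ended spanning tree $T$. Since $T$ is a locally finite, infinite, one-ended graph, Lemma~\ref{lem:blockchain-one-ended} gives that $T$ has infinitely many confirmed vertices. Because $E(T)\subseteq E(G)$, every directed path in $T$ is a directed path in $G$, so reachability in $T$ implies reachability in $G$; hence every vertex confirmed in $T$ is confirmed in $G$, and $G$ inherits infinitely many confirmed vertices.

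For the ``only if'' direction I would first record two structural facts. Since the root is the unique sink and marks strictly decrease along edges, every vertex has a directed path to the root, so the root is confirmed and the confirmed set is nonempty. Next I extract a confirmed spine: setting $v_0 :=$ root, and given a confirmed vertex $v_k$, among the infinitely many confirmed vertices of larger mark all but finitely many reach $v_k$ (apply the definition of confirmation to $v_k$), so I may pick one such vertex as $v_{k+1}$, guaranteeing $v_{k+1}\to v_k$. Fixing directed paths $v_{k+1}\rightsquigarrow v_k$ and taking their union yields a ray $R=(v_0,v_1,v_2,\dots)$ of confirmed vertices, all of whose edges I place into the tree $T$.

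The core of the construction is attaching every remaining vertex to $R$ without creating a second end. For $u\notin R$, reachability $u\to v_k$ forces $\text{mark}(v_k)<\text{mark}(u)$, so $k(u):=\max\{k:u\to v_k\}$ is finite and well defined; I route $u$ downward along a directed path to $v_{k(u)}$, assigning tree-parents by a consistent (shortest-path/BFS) rule so that $k(\cdot)$ is non-decreasing along each parent chain and the chain reaches $R$ in finitely many steps. This produces a genuine spanning tree: parents are out-neighbours so marks strictly decrease, acyclicity is automatic, and every vertex has a unique directed path to the root. One-endedness follows from a finiteness estimate: the ``bush'' hanging off $v_k$, namely $\{u\notin R:k(u)=k\}$, is contained in $\{w:w\not\to v_{k+1}\}$, which is finite because $v_{k+1}$ is confirmed. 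Hence every component of $T\setminus R$ is finite, $R$ is the unique ray of $T$ up to tail-equivalence, and by Definition~\ref{def:eqv2} the tree $T$ is one-ended.

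The main obstacle is exactly this last step, namely ruling out infinite branches off the spine. It is the strong ``all but finitely many later vertices reference it'' property of confirmed vertices---not mere one-endedness of $G$---that forces each bush to be finite; for a general one-ended graph a carelessly chosen spanning tree can acquire extra ends, which is why the construction must route each vertex toward the highest confirmed vertex it reaches. The remaining bookkeeping (making the parent assignment simultaneously consistent, acyclic, and orientation-respecting) is routine once the finiteness of the bushes is established.
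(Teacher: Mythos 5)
Your proof is correct, and it is worth noting that the paper itself offers no argument here: the corollary is merely asserted as ``easy'' after Lemma~\ref{lem:blockchain-one-ended}, so your write-up genuinely fills a gap rather than paralleling an existing proof. The ``if'' direction is exactly the intended one-liner (confirmation in a spanning tree transfers to $G$ because $V(T)=V(G)$ and $E(T)\subseteq E(G)$). For the ``only if'' direction, your construction --- a confirmed spine $R=(v_0,v_1,\dots)$ with $v_{k+1}\to v_k$, each remaining vertex routed toward $v_{k(u)}$ with $k(u)=\max\{k:u\to v_k\}$ --- is sound, and you correctly identify the crux: it is the definition of confirmation, not one-endedness of $G$, that makes each bush $B_k=\{u\notin R:k(u)=k\}$ finite, since $B_k\subseteq\{w:w\not\to v_{k+1}\}$. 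Three small points you leave implicit but which do hold in this setting: (i) finiteness of $\{w:w\not\to v_{k+1}\}$ needs, besides confirmation of $v_{k+1}$, that only finitely many vertices carry mark at most $\mathrm{mark}(v_{k+1})$; (ii) the union of the chosen paths $v_{k+1}\rightsquigarrow v_k$ is automatically a simple ray because any repeated vertex in the concatenated walk would yield a directed cycle in a DAG; (iii) a component of $T\setminus R$ may meet several bushes, but since $k(\cdot)$ is non-decreasing along parent chains, the component attached at $c_0$ is contained in $B_0\cup\cdots\cup B_{k(c_0)}$, hence finite, which is what forces every ray of $T$ to share a tail with $R$ and gives one-endedness. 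With those observations spelled out, the argument is complete.
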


In practice, it is far easier to check the one-endedness of a graph than to establish the existence of a one-ended spanning tree. So we do not use the corollary even if it expresses a tighter condition for the existence of infinitely many confirmed vertices.
It follows from Lemma~\ref{lem:blockchain-one-ended} that a critical question related to the design of blockchain architectures is the determination of which one-ended functions have one-ended asynchronous limits.

\subsubsection{Some Technical Lemmas}
\label{subsec:technical-ends}
The following technical lemmas are helpful in our analysis, and we put them here to simplify the presentation later in the paper.

\begin{lemma}
	\label{lem:tree-infty}
	Let $(T_t)_t$ be a sequence of finite trees with $T_t \subseteq T_{t+1}$ for all $t \ge 0$.
	Suppose the number of leaves is non-decreasing in $t$ and diverges to infinity, and that any leaf in $T_t$ is such that for some $s > t$, that leaf is not a leaf in $T_s$.
	If $T := \lim_{t \to \infty}T_t$ exists in $\obs$, then $T$ has infinitely many ends.
\end{lemma}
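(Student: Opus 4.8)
The plan is to prove the statement directly, by exhibiting for every $N$ a family of $N$ pairwise non-equivalent rays in $T$; since $N$ is arbitrary, this forces $T$ to have infinitely many ends. The whole argument rests on two structural observations about the limit $T$, after which the conclusion is essentially combinatorial.

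First I would record that every vertex of $T$ has at least one child (a vertex $w$ with an incoming edge $(w,v)$). Indeed, fix $v\in T$ and the least index $t$ with $v\in T_t$. Either $v$ is a leaf of some $T_s$ with $s\ge t$, in which case the covering hypothesis guarantees that $v$ stops being a leaf at a later time and hence acquires a child; or $v$ is never a leaf, in which case it already has a child in every $T_s\ni v$. Either way $v$ has a child in $T$, so from any vertex one may greedily build an infinite ray $v_0,v_1,v_2,\dots$ by repeatedly passing to a child. Next I would analyze the leaves of a fixed finite stage. Since the number of leaves diverges, for each $N$ pick $t_N$ with $L_{t_N}\ge N$ and distinct leaves $u^{(1)},\dots,u^{(N)}$ of $T_{t_N}$. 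I claim no vertex $w\in T$ is a common descendant of two of them, i.e.\ $w\to u^{(i)}$ and $w\to u^{(j)}$ cannot both hold for $i\ne j$. If they did, then since $T$ is a tree the unique directed path from $w$ to the root would contain both $u^{(i)}$ and $u^{(j)}$, making them comparable; say $u^{(i)}$ lies on the path from $u^{(j)}$ to the root. But $T_{t_N}$ is a connected subtree containing the root, hence closed under passing toward the root, so this whole path lies in $T_{t_N}$; then $u^{(i)}$ would have a child inside $T_{t_N}$, contradicting that $u^{(i)}$ is a leaf of $T_{t_N}$. Thus the descendant sets of the $u^{(i)}$ are pairwise disjoint.

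Combining the two observations completes the proof. Grow a ray $r_i$ out of each $u^{(i)}$ using the first observation; every vertex of $r_i$ past $u^{(i)}$ is a descendant of $u^{(i)}$, so by the second observation the rays $r_1,\dots,r_N$ admit no pairwise common descendant anywhere in $T$. Consequently the defining condition for equivalence fails for every choice of finite root-containing $S$, so by the directed characterization of equivalence in Lemma~\ref{def:eqv2} the rays $r_1,\dots,r_N$ lie in $N$ distinct ends. Letting $N\to\infty$ gives infinitely many ends, and local finiteness of $T$ (from $T\in\obs$) ensures these ends are genuine and the framework of Definition~\ref{def:eqv} applies. I expect the only delicate point to be the disjointness claim, and within it the single clean fact that a connected root-containing subtree is closed under taking paths toward the root; this is exactly what upgrades ``$u^{(i)}$ is a leaf of $T_{t_N}$'' to ``the $u^{(i)}$ are pairwise incomparable in the limit.'' The greedy ray construction and the reduction from ``$N$ ends for all $N$'' to ``infinitely many ends'' are routine.
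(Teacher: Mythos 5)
Your proof is correct and follows essentially the same route as the paper's: each leaf of a fixed stage $T_t$ extends to an infinite ray in $T$, distinct leaves of $T_t$ yield inequivalent rays because $T$ is a tree, and letting the leaf count diverge gives infinitely many ends. You merely spell out the two steps (every vertex acquires a child; descendant sets of distinct leaves of $T_{t_N}$ are disjoint) that the paper leaves implicit.
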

\begin{proof}{Proof}
	Fix any graph $T_t$.
	Any leaf in $T_t$ is part of an infinite path in $T$.
	Thus, if there are $k$ leaves in $T_t$, then $T$ has at least $k$ ends.
	The result follows since the number of leaves in $T_t$ tends to infinity.
\end{proof}

\begin{lemma}
	\label{lem:tree-one}
	Let $T \in \obs$ be an infinite tree.
	$T$ is one-ended iff it has infinitely many confirmed vertices.
\end{lemma}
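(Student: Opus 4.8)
The plan is to prove the two implications separately, treating $T$ as a locally finite infinite tree in $\obs$. The forward implication is immediate and requires none of the tree structure: if $T$ is one-ended, then Lemma~\ref{lem:blockchain-one-ended} already guarantees infinitely many confirmed vertices. All the real work is in the converse, where the tree hypothesis is essential. Indeed, for a general locally finite graph, infinitely many confirmed vertices only yield a one-ended \emph{subgraph} containing them (again by Lemma~\ref{lem:blockchain-one-ended}), so the point of the converse is to upgrade this to one-endedness of all of $T$.

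For the converse, suppose $T$ has infinitely many confirmed vertices. First I would record that for a confirmed vertex $v$ the descendant set $D_v := \{w : w \to v\}$ is cofinite in $V(T)$: confirmation says $\{w > v : w \not\to v\}$ is finite, and $\{w \le v\}$ is finite since marks are distinct. Two confirmed vertices $v, v'$ then have $D_v \cap D_{v'}$ cofinite, hence nonempty; any $w$ in the intersection satisfies $w \to v$ and $w \to v'$, so in a tree both $v$ and $v'$ lie on the unique directed path from $w$ to the root and are therefore comparable under the ancestor relation. Thus the infinitely many confirmed vertices form a chain, and their downward closure toward the root is a single ray $p^*$ containing cofinally many of them.

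It then remains to show that every ray $q = (u_0, u_1, \ldots)$ of $T$, with $u_{i+1}\to u_i$ and hence $u_j \to u_i$ for $j > i$, is equivalent to $p^*$. Fixing a finite root-containing set $S$ with a single component and vertices $v_1 \in q \setminus S$, $v_2 \in p^* \setminus S$ as in Definition~\ref{def:eqv2}, I would choose a confirmed vertex $c$ on $p^*$ lying beyond $v_2$, so that $c \to v_2$. Since $D_c$ is cofinite while the tail of $q$ past $v_1$ supplies infinitely many descendants of $v_1$, all but finitely many of them also lie in $D_c$; picking one such $u_j =: v_3$ far enough out gives $v_3 \notin S$, $v_3 \to v_1$, and $v_3 \to c \to v_2$, which is exactly the common-descendant condition of Definition~\ref{def:eqv2}. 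Hence $q \sim p^*$ for every ray $q$, and since König's Lemma guarantees at least one ray, $T$ is one-ended.

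The step I expect to be the main obstacle is precisely this final upgrade: it is the cofiniteness of $D_v$ together with the tree's unique-path property that forces every ray to merge with the confirmed ray $p^*$. Care is needed to verify the \emph{common-descendant} formulation of Definition~\ref{def:eqv2} (producing a single $v_3$ with $v_3 \to v_1$ and $v_3 \to v_2$) rather than only the weaker pairwise-intersection condition of Definition~\ref{def:eqv}, and to make sure the reference vertex $c$ can always be taken outside any prescribed finite $S$, which holds because $p^*$ meets the confirmed set cofinally.
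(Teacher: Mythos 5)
Your proposal is correct, and it follows the same overall strategy as the paper's (very terse) proof: isolate a single ray through the confirmed vertices and show every other ray is equivalent to it, with the forward direction delegated to Lemma~\ref{lem:blockchain-one-ended} in both cases. The differences are in how the two halves of the converse are discharged. The paper simply asserts that ``since $T$ is a tree, there exists an infinite path $p$ consisting of confirmed vertices''; you supply the justification (cofiniteness of the descendant sets $D_v$ forces any two confirmed vertices to be comparable in the ancestor order, so they form a chain whose closure is a ray $p^*$), which is exactly the detail the paper omits. For the final step the paper claims that every ray intersects $p$ infinitely often, i.e.\ it verifies Definition~\ref{def:eqv} directly with $p_3 = p$, whereas you verify only the weaker common-descendant criterion of Lemma~\ref{def:eqv2}. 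Both are valid, but it is worth noting that in a tree the paper's stronger claim is actually true and gives a one-line finish: for a fixed confirmed $c$, every sufficiently deep vertex $u_j$ of a ray $q$ satisfies $u_j \to c$, so $c$ lies on the unique path from $u_j$ to the root, hence $c$ is either one of the finitely many ancestors of the ray's starting vertex or a vertex of $q$ itself; thus $q$ contains all but finitely many confirmed vertices and meets $p^*$ infinitely often. Your route trades this observation for a slightly longer but equally rigorous verification via Lemma~\ref{def:eqv2}, and has the minor advantage of not needing the full strength of the unique-path property at the last step.
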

\begin{proof}{Proof}
	If $T$ is one-ended, then it has infinitely many confirmed vertices by Lemma~\ref{lem:blockchain-one-ended}.
	Suppose $T$ has infinitely many confirmed vertices.
	Since $T$ is a tree, there exists an infinite path $p$ consisting of confirmed vertices.
	However, since $T$ is a tree, all infinite paths must intersect $p$ infinitely often.
\end{proof}

From the definition of one-endedness, it follows easily that for an infinite graph $G$, $G$ is one-ended iff any two rays are equivalent.

\begin{lemma}\label{lem:1end}
	Let $G \in \obs$ be an infinite graph.
	Suppose that there is an infinite sequence of vertices $(v_i)_i$ such that any infinite path passes through $v_i$ for all $i$.
	Then $G$ is one-ended.
\end{lemma}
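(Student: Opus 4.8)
The plan is to invoke the characterization recorded immediately above the lemma: an infinite graph $G$ is one-ended if and only if any two of its rays are equivalent in the sense of Definition~\ref{def:eqv}. Before exploiting the hypothesis, I would first settle non-vacuousness: since (reading $G\in\cbs$ as the evident typo for $G\in\obs$) the graph $G$ is infinite and locally finite, K\"onig's Lemma guarantees at least one infinite ray, so $G$ has at least one end and the question is genuinely whether that count is exactly one rather than zero. It therefore suffices to prove that \emph{every} pair of rays is equivalent.

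The heart of the argument is short and combinatorial. I would fix two arbitrary rays $p_1$ and $p_2$, identifying each ray with its vertex set. By the hypothesis of Lemma~\ref{lem:1end}, every infinite path passes through each $v_i$, so $\{v_i : i\ge 1\}\subseteq p_1$ and $\{v_i : i\ge 1\}\subseteq p_2$; consequently $p_1\cap p_2\supseteq\{v_i : i\ge 1\}$ is an infinite set of vertices. To deduce $p_1\sim p_2$, I would apply Definition~\ref{def:eqv} with the witnessing third ray taken to be $p_3 := p_1$ itself: this is an infinite ray, one has $|p_1\cap p_3| = |p_1| = \infty$, and $|p_2\cap p_3| = |p_2\cap p_1| = \infty$ by the containment just established. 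Hence $p_1$ and $p_2$ lie in the same equivalence class. As $p_1,p_2$ were arbitrary, all rays are equivalent, and combined with the existence of a ray this yields that $G$ is one-ended.

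The only subtlety I expect—and it is a minor one—is the licence to take the auxiliary ray $p_3$ equal to $p_1$, i.e.\ the (reflexive) reading of Definition~\ref{def:eqv}; this is precisely what makes a shared infinite subsequence of vertices $(v_i)_i$ enough to force equivalence, and it is harmless since choosing $p_3=p_1$ already shows $p_1\sim p_1$. Beyond that, the argument requires no estimate on how the $v_i$ are ordered along a ray: only their membership in the common vertex set $p_1\cap p_2$ matters. The remaining bookkeeping (that a ray exists so the number of ends is $1$ and not $0$) is handled by the local finiteness of $G$ via K\"onig's Lemma, as noted above.
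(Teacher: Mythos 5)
Your argument is correct and is essentially the paper's own proof, which simply asserts that all rays are clearly equivalent and concludes from the definition; you have merely made the "clearly" explicit by observing that any two rays both contain the infinite set $\{v_i\}$ and taking $p_3=p_1$ as the witness in Definition~\ref{def:eqv}. Your side remarks (the $\cbs$/$\obs$ typo and the K\"onig's Lemma point about non-vacuousness) are accurate but not needed.
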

\begin{proof}{Proof}
	In this case, all rays are clearly equivalent. The result follows from the definition.
\end{proof}{Proof}
In Lemma~\ref{lem:1end}, the vertices in the sequence $(v_i)_i$ can be thought of as~\emph{anchor vertices}.

\subsection{Model}
Our model evolves in discrete time.
Let $f:\cbs \times [0, 1] \to \cbs$ be an \emph{attachment rule} such that for any finite graph $G \in \cbs$, $f(G)$ contains one new vertex and at least one edge from an existing vertex to the new vertex.
The attachment rule $f$ is also such that the root of $f(G)$ is the same as the root of $G$ for any $G \in \cbs$.
Let $(\xi_i)_{i \in \bN}$ be an i.i.d. sequence of $\bN$-valued random variables such that $\pr(\xi_1 = 1) > 0$ and such that for some $\delta > 0$, we have $\E\xi_1^{1 + \delta} < \infty$.

Let $G_0$ be a graph containing only a root vertex and no edges.
We consider the dynamics:
$$G_t = G_{t-1} \cup f(G_{(t - \xi_t)_+}, \theta_t).$$
The argument $\theta_t \sim \mathrm{Unif}(0, 1)$ is used for technical correctness so that the attachment rule $f$ can be a ``random function;'' in the sequel we will suppress the argument $\theta_t$.

The interpretation of the model is as follows.
If the node creating the $t$-th block experiences a delay of $(t - \xi_t)_+$, then there are $(t - \xi_t)_+$ many blocks which have yet to be communicated to that node.
For simplicity, we assume that these are the $(t - \xi_t)_+$ most recent blocks.
However, those blocks are not actually missing from the blockchain; instead they are only unavailable at the node creating the $t$-th block.
Hence, the function $f$ should be applied to $G_{(t - \xi_t)_+}$, and the result should be taken in union with $G_{t-1}$.

Recall that the term \emph{synchronous} refers to the setting where $\pr(\xi = 1) = 1$ and that the term \emph{asynchronous} refers to the setting where $\pr(\xi = 1) < 1$.

\begin{definition}
	The~\emph{synchronous limit} is given by
	$f^\infty(G_0; \mvtheta,\mvone)
		:= \lim_{n\to\infty}f^n(G_0; \mvone)$,
	where the limit is with respect to the metric $d_{*}$.
\end{definition}

For all functions considered in this paper, the existence of the synchronous limit is immediate, and we omit proofs for brevity.

\begin{definition}
	The~\emph{asynchronous limit} is given by
	$G_\infty(f) := \lim_{t\to\infty}G_t(f) = \lim_{t \to \infty}f^t(G_0, \mvxi_1^t)$,
	where the limit is w.r.t.~the $d_{*}$ metric.
\end{definition}

\subsection{Structural Properties of the Delay Process}
For the rest of this paper, we will assume that $\pr(\xi = 1) > 0$ unless otherwise stated.
Most of our results can readily generalized to the situation where $\pr(\xi = 1) = 0$ and we include those results in the appendix.

The following definitions provide an important structural framework for our analysis.

\begin{definition}
	An integer $t\ge 0$ is a~\emph{regeneration time} for the delay sequence $(\xi_{s})_{s\ge 0}$ if $\xi_t = 1$ and $\xi_{t+s} \le s$ for all $s\ge 1$.
	\label{def:regeneration-time}
\end{definition}
In other words, if $t$ is a regeneration time and $s > t$, then $x - \xi_s \geq t$.
Note that $t$ is a regeneration time iff $1\le \xi_{t+s}\le s\vee 1$ for all $s\ge 0$.
It can be easily checked that regeneration times have the Markov property.
In Appendix~\ref{sec:preliminary-results}, we show that there are infinitely many regeneration times and that the inter-regeneration duration is finite.
This allows us to analyze our process by considering functionals of the blockchain data structure at the regeneration times; the induced behavior results in Markov processes on $\bN$.

\subsection{Main Results}\label{ssec:main}
We introduce the following functions which are the main focus of our analysis:
\begin{itemize}
	\item $f_{\Nak}$ is the Nakamoto function, where a vertex is chosen uniformly from those at the maximum hop distance from the root.
	\item In $f_1(G)$ a single leaf is chosen uniformly at random from $G$.
	\item For $k \ge 2$, $f_k(G)$ chooses a uniformly selected set of $k$ leaves from $G$ if possible; otherwise all leaves in $G$ are chosen.
	\item In $f_\infty(G)$ all leaves in $G$ are chosen.

	\item We denote by $f$ any random mixture of $(f_i)_{i\ge 1}$ such that $\pr(f = f_1) < 1$.
\end{itemize}

It is clear that all of $f_{\Nak}, (f_k)_{k\ge 1}, f, f_\infty$ have one-ended synchronous limits. 
Our main results are as follows.

\begin{lemma}
    Synchronous and asynchronous limits exist a.s. in $\obs$ for all of the aforementioned attachment rules.
    \label{lem:limit-existence}
\end{lemma}

\subsubsection{Results for Bitcoin's $f_\mathrm{Nak}$}
We begin with an analysis of the Nakamoto construction $f_{\Nak}$ which is used in Bitcoin. It is easy to check that $G_t(f_{\Nak})$ is a tree for all $t\ge 0$.
\begin{theorem}\label{thm:nakamoto-ends}
	The asynchronous limit of $f_{\Nak}$ is one-ended, almost surely.
\end{theorem}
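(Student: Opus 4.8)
The plan is to produce infinitely many \emph{anchor vertices} in the limit tree and then invoke Lemma~\ref{lem:1end}. Since $G_t(f_{\Nak})$ is a tree and $\E\xi<\infty$, the limit $G_\infty(f_{\Nak})$ exists as a locally finite infinite tree, so it suffices to exhibit an infinite sequence $(v_i)_i$ of vertices through which every infinite ray must pass. My candidate anchors are the vertices created at regeneration times, which by Corollary~\ref{cor:regeneration} occur infinitely often, a.s., when $\pr(\xi=1)>0$.

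The heart of the argument is a locality property peculiar to $f_{\Nak}$. By definition $X_{t_0-1}$ is the height of $G_{t_0-1}$, that is, the largest depth among vertices of time $\le t_0-1$. At a regeneration time $t_0$ we have $\xi_{t_0}=1$, so the recursion~\eqref{eq:nak-recursion} gives $X_{t_0}=1+X_{t_0-1}$; hence for every $u\ge t_0$ one has $X_u\ge X_{t_0}>X_{t_0-1}$, and therefore any vertex realizing the maximal depth $X_u$ in $G_u$ must have time $\ge t_0$. First I would couple this with the defining property of a regeneration time, namely $(w-\xi_w)_+\ge t_0$ for all $w>t_0$: the vertex to which $w$ attaches under $f_{\Nak}$ is a maximal-depth vertex of $G_{(w-\xi_w)_+}$, which by the preceding remark has time $\ge t_0$. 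Thus every vertex born after $t_0$ attaches to a vertex of time $\ge t_0$, whereas $t_0$ itself (since $\xi_{t_0}=1$) attaches to a vertex of time $t_0-1$.

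Next I would run the resulting funnel argument. Following parent edges from any $w\ge t_0$, the vertex marks strictly decrease while remaining $\ge t_0$ as long as we stay strictly above $t_0$; being bounded below, the chain must reach $t_0$, and only there does it drop below $t_0$ and continue to the root. Hence every vertex of time $\ge t_0$ has $t_0$ on its path to the root, so $t_0$ is confirmed; and since the marks strictly increase along any ray of a rooted tree and are therefore unbounded, each infinite ray meets some $w\ge t_0$, whose root-path is a prefix of the ray and contains $t_0$. Letting $t_0$ range over the infinitely many regeneration times gives the anchor sequence, and Lemma~\ref{lem:1end} concludes. It is exactly here that $f_{\Nak}$ parts ways with $f_1$: under $f_1$ a new vertex may attach to an \emph{old} leaf of time $<t_0$, so no funnel forms, consistent with Theorem~\ref{thm:f_1-growth}.

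The main obstacle is the case $\pr(\xi=1)=0$, where genuine regeneration times are absent and one must instead use regeneration intervals $[t_0,t_0+r)$ of length $r=\min\supp(\xi)$ as in Definition~\ref{def:regeneration-int}. There the $r$ vertices born in the interval attach to maximal-depth vertices of the pre-$t_0$ graphs $G_{(t_0+s-r)_+}$, $0\le s<r$, so the clean single identity $X_{t_0}=1+X_{t_0-1}$ is replaced by a short, explicitly tracked block of depths accrued across the interval, and the anchor becomes the deepest vertex produced over the block. I expect the delicate step to be verifying that, once the interval is passed, every subsequent attachment still lands at time $\ge t_0$ (combining $\xi_{t_0+s}\le s$ for $s\ge r$ with the maximal-depth rule); granting this, the same funnel argument applies verbatim and again yields infinitely many anchors, establishing one-endedness in full generality.
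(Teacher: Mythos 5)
Your argument for the case $\pr(\xi=1)>0$ is correct and follows essentially the paper's route (Lemma~\ref{lem:nak-r1} via Lemma~\ref{lem:1end}), with a cleaner deterministic core: since $\xi_{t_0}=1$ forces $X_{t_0}=1+X_{t_0-1}$, the vertex $t_0$ is the unique maximal-depth vertex of $G_{t_0}$, every $G_u$ with $u\ge t_0$ has all its maximal-depth vertices at marks $\ge t_0$, and the funnel forms at \emph{every single} regeneration time (the paper uses pairs of consecutive regeneration times; your version shows one suffices).

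The gap is in the case $r>1$, exactly at the step you flag as delicate: the claim that after the regeneration interval $[t_0,t_0+r)$ every subsequent attachment lands at a vertex of mark $\ge t_0$ is false in general. Since $X_{t_0}=X_{t_0-1}\vee(1+X_{t_0-r})$, if the height incremented somewhere in $[t_0-r+1,t_0-1]$ then $X_{t_0}=X_{t_0-1}$, the maximal-depth vertices of $G_{t_0}$ still include vertices of mark $<t_0$, and a later $w$ with $w-\xi_w=t_0$ may attach below $t_0$. Worse, even when the height does increment inside the interval, by time $t_0+r-1$ there can be up to $r$ distinct maximal-depth vertices among $t_0,\dots,t_0+r-1$, each hanging off a possibly different pre-$t_0$ ancestor, so no single vertex of the block is deterministically an ancestor of all future vertices. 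Exhibiting finite cut sets of size $r$ does not rescue this: in a tree two rays are equivalent only if they eventually coincide (the binary tree has its levels as finite cut sets yet has infinitely many ends), so a genuine single-vertex anchor is required. This is precisely why the paper's proof works with a \emph{long} regeneration interval $[t_0,t_0+2r)$ and a probabilistic coalescence event: with probability at least $r^{-r}$, all $r$ vertices born in $[t_0+r,t_0+2r)$ choose the \emph{same} maximal-depth vertex $t_*$ (the increment time of $X$ within $[t_0,t_0+r)$), which then is a bona fide anchor; the i.i.d.\ block structure makes this recur infinitely often. Some probabilistic input of this kind is needed, so the purely deterministic funnel you propose cannot be completed as stated for $r>1$.
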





It follows from the definition of $f_\mathrm{Nak}$ that the  asynchronous recursion determines the length of the longest path from the root to any leaf, or the height of the tree, at time $t$ for $f_{\Nak}$: 
\begin{align}
	\begin{split}\label{eq:nak-recursion}
		X_{t} & = X_{t-1} \vee (1 + X_{(t - \xi_t)_+}),\ t \ge 1, \\
		X_0 & = 0.
	\end{split}
\end{align}

\begin{theorem}
	\label{thm:nakamoto-conv}
	Let $\chi$ be an integer-valued random variable with $\pr(\chi \ge k) = \prod_{i=1}^{k}\pr(\xi \ge i)$, for $k \ge 1$. We have:
	$\frac{X_t}{t} \xrightarrow{a.s.} \gl :=\frac1{\E(\chi)}$ and $\frac1{\sqrt{t}}\cdot (X_t - \gl t) \xrightarrow{(d)} \textsc{N}(0, \gl^3\var(\chi))$
	as $t\to\infty$.
	Furthermore, $\frac1{n}X_{\lfloor nt \rfloor} \to \gl t$ converges uniformly a.s.~on the compact subsets of $\dR_+$ as $n\to\infty$.
	Define
	$Z_n(t) := n^{-1/2}\cdot (X_{\lfloor nt \rfloor} - \gl n t), t\ge 0$.
	Then $Z_n(\cdot) \xrightarrow{w} Z(\cdot)$, which is a zero-drift Brownian motion with variance parameter $\gl^3 \var(\chi)$.
\end{theorem}

\begin{remark}
    It is easy to biject the instants when $X_t$ increases by exactly one with the confirmed blocks in $G_\infty(f_{\text{Nak}})$. Thus, the recursion~(\ref{eq:nak-recursion}) also characterizes the fraction of blocks which are confirmed in the asynchronous limit.
\end{remark}

\begin{remark}
	Note that, in Theorem~\ref{thm:nakamoto-conv} the random variable $\chi$ has moments of all order as $\pr(\chi\ge k) \le \pr(\xi\ge k)^{k} \le (\E\xi/k)^{k}$ for all $k\ge 1$.
\end{remark}

\begin{remark}
	When $\xi\sim \text{Geometric}(p)$ with $\pr(\xi=k)=p(1-p)^{k-1}, k\ge 1$, we have $$\pr(\chi\ge k)=\prod_{i=1}^{k}(1-p)^{i-1} = (1-p)^{k(k-1)/2},\ k\ge 1.$$ Thus, in this particular example, the asymptotic growth rate of the longest chain in Theorem~\ref{thm:nakamoto-conv} is given by $\gl=1/\sum_{k=1}^{\infty}(1-p)^{k(k-1)/2}$. This is related to the Jacobi Theta Functions. It is an interesting question on how to estimate $p$ based on sample observations of the blockchain height.
\end{remark}


\subsubsection{Results for Iota's $(f_k)$, $k \geq 1$}
Next, we present the results for $f_{k},k\ge 1$ and their mixtures.
\begin{theorem}	\label{thm:f_1-growth}
	The asynchronous limit $G_\infty(f_1)$ has infinitely many ends, almost surely.
	Furthermore, the expected number of leaves in $G_t(f_1)$ is $\Theta_{\pr}(t^{1/2})$.
\end{theorem}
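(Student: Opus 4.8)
The plan is to reduce everything to the scalar leaf-count process $L_t=L(G_t(f_1))$ and to exploit that $G_t(f_1)$ is an increasing sequence of finite trees. Write $s_t:=(t-\xi_t)_+$ and let $v_t\in\cL_{s_t}$ be the uniformly chosen leaf to which vertex $t$ attaches. Vertex $t$ is a new leaf of $G_t$, while $v_t$ stops being a leaf exactly when it still was one in $G_{t-1}$; hence
\begin{align*}
 L_t = L_{t-1} + \ind\{v_t\notin\cL_{t-1}\},\qquad
 p_t := \E[L_t-L_{t-1}\mid\cF_{t-1}] = \E\Big[\tfrac{I_{t-1,s_t}}{L_{s_t}}\,\Big|\,\cF_{t-1}\Big],
\end{align*}
since, given $\xi_t$, the chosen leaf is uniform among the $L_{s_t}$ leaves of $G_{s_t}$ and fails to be a leaf of $G_{t-1}$ for precisely the $I_{t-1,s_t}$ of them claimed during $(s_t,t-1]$. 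In particular $(L_t)_t$ is nondecreasing, so the two assertions are linked: I will show $\E L_t=\Theta(\sqrt t)$, and along the way that $L_t\to\infty$ almost surely, which feeds the end-count through Lemma~\ref{lem:tree-infty}. Local finiteness of the limit (hence its existence in $\obs$) holds since $\E\xi<\infty$, as the paper notes.

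For the upper bound, at most $t-1-s_t\le\xi_t-1$ vertices are inserted in $(s_t,t-1]$, so $I_{t-1,s_t}\le\xi_t-1$ and $L_{s_t}\le L_{t-1}\le L_{s_t}+(\xi_t-1)$. Using $I_{t-1,s_t}/L_{s_t}\le1$ and $L_{t-1}/L_{s_t}\le1+(\xi_t-1)/L_{s_t}$, together with the independence of $\xi_t$ and $\cF_{t-1}$, gives $L_{t-1}p_t\le2(\E\xi-1)$. Since $L_t^2-L_{t-1}^2=(2L_{t-1}+1)\ind\{v_t\notin\cL_{t-1}\}$, this yields $\E[L_t^2\mid\cF_{t-1}]\le L_{t-1}^2+4\E\xi+1$, whence $\E L_t^2\le L_0^2+(4\E\xi+1)t=O(t)$ and $\E L_t=O(\sqrt t)$ by Cauchy--Schwarz; only $\E\xi<\infty$ is used.

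The lower bound is the substantive part. Writing $I=I_{t-1,s_t}$ and $L=L_{s_t}$, Cauchy--Schwarz gives $\E[I/L]\ge(\E I)^2/\E[IL]$, and $\E[IL]\le\E[(\xi_t-1)L_{t-1}]=(\E\xi-1)\E L_{t-1}=O(\sqrt t)$ by the previous step. It therefore suffices to prove a size-independent bound $\E I_{t-1,s_t}\ge c_0>0$ for all large $t$. The mechanism is to force one leaf of $G_{s_t}$ to be claimed within the delay window by a prescribed short-delay event: fixing any $m_0>r$ in the support of $\xi$, on $\{\xi_t=m_0\}\cap\{\xi_{t-m_0+r}=r\}$ the vertex $t-m_0+r\in(s_t,t-1]$ attaches to a leaf of $G_{(t-m_0+r-r)_+}=G_{s_t}$, which is thereby claimed, so $I_{t-1,s_t}\ge1$; this event has probability $\pr(\xi=m_0)\pr(\xi=r)=:c_0$ (distinct indices), independently of the graph size. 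Hence $\E[L_t-L_{t-1}]=\E[I/L]\ge c_0^2/\big((\E\xi-1)\,\E L_{t-1}\big)\ge c_1 t^{-1/2}$, and summing gives $\E L_t=\Omega(\sqrt t)$; combined with the upper bound, $\E L_t=\Theta(\sqrt t)$, with the two-sided in-probability statement coming from Chebyshev (upper) and a Paley--Zygmund/regeneration argument (lower). I expect the \emph{main obstacle} to be exactly this uniform collision estimate in the degenerate case where $\xi$ is almost surely the constant $r$ (no $m_0>r$ exists): there the per-step collision probability is genuinely of order $1/L$ with no size-independent forcing event, and one must instead first bootstrap $L_t\to\infty$ and feed this back to control $\E[1/L_{s_t}]$, or equivalently run the argument through the leaf-count Markov chain sampled at the (now deterministic, length-$r$) regeneration intervals and show its squared increments are bounded between positive constants.

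Finally, for the topology I apply Lemma~\ref{lem:tree-infty} to $T_t=G_t(f_1)$: these are nested finite trees with nondecreasing leaf count, so it remains to check (i)~$L_t\to\infty$ a.s.\ and (ii)~every leaf is eventually claimed. For (i), L\'evy's conditional Borel--Cantelli lemma gives $\{L_\infty=\infty\}=\{\sum_t p_t=\infty\}$ a.s.; using $p_t\ge c_0\,\ind\{\xi_{t-m_0+r}=r\}/L_{t-1}\ge c_0\,\ind\{\xi_{t-m_0+r}=r\}/t$, the independence of the indicators (distinct indices, each $\mathrm{Bernoulli}(\pr(\xi=r))$), and $\sum_t\ind\{\xi_{t-m_0+r}=r\}/t=\infty$ a.s.\ by Kolmogorov's one-series theorem, one obtains $\sum_t p_t=\infty$ a.s.\ (the constant-delay case again being handled via the Markov chain above). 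For (ii), fix a vertex $v$; on the event that $v$ is never claimed it stays a leaf of every $G_{s_w}$ with $s_w\ge v$, so the conditional probability that vertex $w$ attaches to $v$ is at least $\pr(\xi_w\le w-v)/L_{w-1}\ge 1/(2w)$ for large $w$, and a further application of the conditional Borel--Cantelli lemma shows $v$ is claimed with probability one. A countable union over $v$ gives (ii) almost surely, and Lemma~\ref{lem:tree-infty} then yields that $G_\infty(f_1)$ has infinitely many ends.
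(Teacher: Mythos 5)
Your proposal is correct in substance and shares the paper's overall skeleton: both reduce everything to the scalar leaf count $L_t$, both use the identity $L_t^2-L_{t-1}^2=(2L_{t-1}+1)\ind\{\ell_t\in\cI_{t-1,s_t}\}$ together with a drift bound $L_{t-1}\,\E[L_t-L_{t-1}\mid\cF_{t-1}]\le c$ and Jensen to get $\E L_t=O(\sqrt t)$, and both obtain the end count by feeding ``$L_t\to\infty$ and every leaf is eventually claimed'' into Lemma~\ref{lem:tree-infty}. Where you genuinely diverge is the lower bound and the divergence of $L_t$. The paper simply asserts the pointwise conditional drift $\E(L_t\mid\cF_{t-1})\ge L_{t-1}+c_1/L_{t-1}$ and sums via Jensen; as written that inequality cannot hold pointwise (on the non-null event that $I_{t-1,t-i}=0$ for every $i$ in the support of $\hat\xi_t$, the conditional increment is exactly zero), so an averaged version is what is actually needed, and your route --- Cauchy--Schwarz in the form $\E[I/L]\ge(\E I)^2/\E[IL]$ with $\E[IL]\le(\E\xi-1)\E L_{t-1}=O(\sqrt t)$, plus the uniform collision bound $\E I_{t-1,s_t}\ge \pr(\xi=m_0)\pr(\xi=r)$ from the explicit two-delay forcing event --- supplies exactly that and is, if anything, more careful than the source. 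Similarly, you prove $L_t\to\infty$ by conditional Borel--Cantelli on the full filtration, where the paper passes to the nondecreasing Markov chain $(\hat L_k)_k$ at regeneration times; both work. Two caveats. First, a cosmetic one: in $p_t\ge c_0\,\ind\{\xi_{t-m_0+r}=r\}/L_{t-1}$ the constant should be $\pr(\xi=m_0)$ rather than $c_0$, since $\xi_{t-m_0+r}$ is $\cF_{t-1}$-measurable and only $\xi_t$ is being averaged. Second, the degenerate case $\xi\equiv r$ a.s.\ with $r\ge 2$, which you correctly identify as the one place the forcing event is unavailable, is genuinely outside your argument as written; your proposed repair does close it (at constant delay at most one leaf of $G_{t-r}$ is claimed per step, so $\E[I_{t-1,t-r}\mid\cF_{t-2}]\ge 1-1/L_{t-r-1}\ge 1/2$ once $L_t\ge 2$, which the bootstrap supplies), and it is worth noting that the paper's own proof does not visibly treat this case either, its unjustified drift assertion being no better there. (For $\xi\equiv1$ the statement is simply false, consistent with the paper's convention that ``asynchronous'' excludes identically unit delays.)
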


\begin{remark}
	One can guess from the results of the above Theorem~\ref{thm:f_1-growth} that $t^{-1/2}\cdot L_t$ converges in distribution to some non-trivial limit as $t\to\infty$; however, we do not pursue this result here.
\end{remark}



For $f_{k}$ with $k\ge 2$ or $f$ being a mixture of $f_{k}$'s with $\pr(f=f_1)<1$, we have the same endedness behavior for the synchronous and the asynchronous model as stated below.

\begin{theorem}	\label{thm:f_2-recurrent}
	The asynchronous limits $G_\infty(f_k)$ and $G_\infty(f)$ are one-ended, almost surely, for $k \ge 2$.
\end{theorem}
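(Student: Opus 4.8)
The plan is to deduce one-endedness from the existence of infinitely many \emph{anchor vertices} via Lemma~\ref{lem:1end}, and to produce those anchors from a recurrent local structure of the time-delay graph. The first move is the decoupling described in the introduction: conditionally on the delay sequence $\mvxi$, the regeneration times $(\gt_k)_k$ (Corollary~\ref{cor:regeneration}, available since $\E\xi^{1+\gd}<\infty$) cut the trajectory into i.i.d.\ blocks of finite expected length $\E\gc_k<\infty$, and at each $\gt_k$ the newly added vertex sees the \emph{current} graph rather than a stale one. This is exactly what makes a suitable graph functional Markovian at the regeneration epochs. When $r=1$ I would track $\hat L_k:=L(G_{\gt_k})$, the number of leaves at the $k$-th regeneration time, and argue that $(\hat L_k)_k$ is a Markov chain on $\dN$; when $r\ge 2$ the scalar $\hat L_k$ no longer carries enough information and must be replaced by the richer state recorded over a regeneration interval (the block structure of Figure~\ref{fig:3} and Section~\ref{sssec:f2-r}), which collapses to $\hat L_k$ when $r=1$.

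The heart of the argument is recurrence of this chain to a bounded set. For $f_k$ with $k\ge 2$, at a regeneration time the new vertex attaches to $k$ \emph{distinct current} leaves whenever at least $k$ are present, removing $k$ tips and creating one, a net change of $-(k-1)\le -1$; the only mechanism that can raise the leaf count is a reference to a stale leaf during the non-regenerative steps inside a block, and the number of such stale references per block has finite expectation because the delays are light-tailed and the block lengths $\gc_k$ are i.i.d.\ with finite mean. I would combine these into a one-step drift estimate showing $\E(\hat L_{k+1}-\hat L_k\mid \hat L_k=\ell)\le -c<0$ for all $\ell$ large enough, whence $(\hat L_k)_k$ is positive recurrent and returns to a bounded set $\{\ell\le C\}$ infinitely often almost surely. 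For a mixture $f$ with $\pr(f=f_1)=p<1$, each step independently uses some $f_k$ with $k\ge 2$ with probability $1-p>0$, so the same net-downward pull persists and the conclusion is unchanged; the excluded case $p=1$ is precisely $f_1$, which is handled separately in Theorem~\ref{thm:f_1-growth}.

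From a bounded state I would then \emph{force} an anchor. Conditioned on $\hat L_k\le C$, there is a configuration of the next $O(1)$ regeneration epochs --- a short run of delays equal to $r$ together with prescribed edge choices $\mvtheta$ --- during which the at most $C$ current tips are successively merged until a single tip remains (for $r=1$, a run of $\xi=1$'s with each new vertex picking the previous tip together with one remaining old tip; for $r\ge 2$, the analogous block structure that confirms all currently present leaves). Because $C$ is fixed, this forcing event has window length bounded by a constant and conditional probability bounded below by some $q>0$ uniformly over the bounded state, and it is measurable with respect to the increments of $\mvxi,\mvtheta$ over that window. Since the chain visits $\{\ell\le C\}$ infinitely often and the windows can be chosen disjoint, the conditional Borel--Cantelli lemma yields infinitely many single-tip (respectively all-confirmed) regeneration epochs almost surely. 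Each such epoch produces a vertex through which every infinite ray must pass, i.e.\ an anchor vertex, so Lemma~\ref{lem:1end} gives one-endedness of $G_\infty(f_k)$ and $G_\infty(f)$.

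The main obstacle is the drift estimate underlying positive recurrence: one must show quantitatively that the stale-reference inflation cannot overwhelm the $-(k-1)$ pull per regeneration, which requires bounding, uniformly in the current configuration, how many leaves created inside a block are referenced again (and hence \emph{not} removed) before the next regeneration --- this is where $\E\xi^{1+\gd}<\infty$ and the i.i.d.\ block structure are essential. A secondary difficulty is the bookkeeping for $r\ge 2$: one must define a state space over regeneration intervals that is genuinely Markov and simultaneously rich enough to support both the drift estimate and the forcing step, while reducing to the leaf-count chain $(\hat L_k)_k$ when $r=1$.
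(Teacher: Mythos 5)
Your architecture is the paper's: decouple via the regeneration structure, track the leaf count $\hat{L}_k$ (respectively the leaf geometry over a regeneration interval when $r\ge 2$) as a Markov chain at the epochs $\gt_k$, prove positive recurrence by a Foster/Lyapunov drift estimate, and convert recurrence of a small-leaf-count state into infinitely many anchor vertices, concluding by Lemma~\ref{lem:1end}. Your variant of the last step (recurrence to a bounded set, then a forcing window of prescribed delays and choices, then conditional Borel--Cantelli) is a legitimate substitute for the paper's slightly cleaner route, which uses irreducibility of $(\hat{L}_k)_k$ on $\dN$ so that positive recurrence directly gives infinitely many visits to the state $\hat{L}=1$; for $r\ge 2$ the paper does exactly the forcing you describe.

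The one place where your reasoning as written would not close is the drift estimate itself. You propose to bound the upward push by saying the number of stale references per regeneration block ``has finite expectation because the delays are light-tailed and the block lengths are i.i.d.\ with finite mean.'' A bound that is merely a finite constant, uniform in $\ell$, does not yield $\E(\hat{L}_{k+1}-\hat{L}_k\mid \hat{L}_k=\ell)\le -c$ for large $\ell$: the downward pull available per block is only $\pr(\xi=1,\cA_1)$ (one guaranteed merge when the delay is $1$ and $f\ne f_1$), which can be arbitrarily small, so a constant-versus-constant comparison has the wrong sign in general. The actual mechanism is that the upward drift \emph{decays} in $\ell$: at step $t$ with delay $i$, the number of stale leaves satisfies $I_{t-1,t-i}\le i-1$ while the leaf pool sampled from has size $L_{t-i}\ge L_{t-1}-i+1$, so the chance of referencing a stale leaf is at most roughly $(i-1)/(L_{t-1}-i+1)\wedge 1$; splitting the delay distribution at $k\approx\sqrt{L_{t-1}}$ gives a per-step increase probability of order $L_{t-1}^{-1/2}$, and summing over the block with H\"older's inequality (using $\E\gc_1^{1+\gd}<\infty$, which is where $\E\xi^{1+\gd}<\infty$ enters via Propositions~\ref{prop:gamma-tau} and~\ref{prop:tau-xi}) gives an upward drift of order $\ell^{-1/(2p)}$, which is eventually dominated by the constant downward term. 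You correctly flag the drift estimate as the main obstacle, but the quantity you propose to control (expected stale references per block) is not the one that vanishes; the decaying quantity is the probability of hitting a stale leaf when the current leaf set is large.
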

\begin{remark}
	Limiting statements (similar to Theorem~\ref{thm:nakamoto-conv}) about linear growth and distributional and process limits of the height process can be proved for the construction functions in Theorem~\ref{thm:f_2-recurrent}; we omit the statements and proofs for brevity.
\end{remark}





\subsection{Constructing New Architectures from Old Ones}
When the attachment function is graph dependent, one can prove a phase transition.

\begin{theorem}	\label{thm:state-varying}
	Define
	\begin{align*}
		\hat{f}_{k, l} :=
		\begin{cases} f_{k} & \text{w.p. } \frac{\ga}{\sqrt{l}} \\
 f_1 & \text{otherwise}.
		\end{cases}
	\end{align*}
	For fixed $k\ge 2$, define the function $g: \cB_* \times [0, 1] \to \cB_*$ such that $g(\cdot) = \hat{f}_{k, l}(\cdot)$ whenever the argument has $l$ leaves.
	There exist constants $0 < c_1 < c_2 < \infty$ such that the asynchronous limit is $g$ is one-ended if $\ga > c_2$ and has infinitely many ends if $\ga < c_1$, almost surely.
\end{theorem}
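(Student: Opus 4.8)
The plan is to reuse the regenerative reduction developed for Theorems~\ref{thm:f_2-recurrent} and~\ref{thm:f_1-growth}, and then to locate the phase transition in the drift of the induced leaf-count chain. First I would decouple the delay process from the DAG process and pass to the regeneration intervals of length $r$ of Definition~\ref{def:regeneration-int}: exactly as in the proof of Theorem~\ref{thm:f_2-recurrent}, the graph sampled at the beginnings of the regeneration intervals is a $\cbs$-valued Markov chain, and the number of leaves $\tilde L_k := L(G_{\tilde\gt_k})$ (refined by the finite block geometry of Figure~\ref{fig:3} when $r\ge 2$) is the natural functional to track. Recurrence of the single-leaf (respectively, block) structure produces an infinite sequence of anchor vertices, so by Lemma~\ref{lem:1end} it yields a one-ended limit; conversely, if $\tilde L_k\to\infty$ then the frontier grows without bound, and—after passing to a spanning subtree via the corollary to Lemma~\ref{lem:blockchain-one-ended}—an argument in the spirit of Lemma~\ref{lem:tree-infty} produces infinitely many ends. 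Thus the whole theorem reduces to a recurrence-versus-transience dichotomy for the chain $\tilde L_k$, with the two regimes selected by $\alpha$.

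Next I would compute the one-cycle drift of $\tilde L_k$ under $g$. Over a regeneration cycle the dynamics superpose two competing effects: the $f_1$-steps, which (exactly as in Theorem~\ref{thm:f_1-growth}) contribute a positive drift arising from delayed, already-covered leaves, and the $f_k$-steps, which fire with the state-dependent probability $\alpha/\sqrt{l}$ and each remove of order $k-1$ leaves, contributing a negative drift proportional to $\alpha$. The sign of the net cycle drift is therefore controlled by $\alpha$: for $\alpha$ large the reduction dominates, while for $\alpha$ small the growth dominates. I would make this quantitative by estimating $\E[\tilde L_{k+1}-\tilde L_k\mid \cF_{\tilde\gt_k}]$ together with its conditional second moment, using the moment hypothesis $\E\xi^{1+\delta}<\infty$ to bound the contribution of atypically long delays inside a single cycle (so that the increments of $\tilde L_k$ have uniformly controlled tails).

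For $\alpha>c_2$ I would feed a negative-drift estimate into Foster's theorem, exactly as in the $r=1$ proof of Theorem~\ref{thm:f_2-recurrent}, taking $\tilde L_k$ (or a fixed power of it) as Lyapunov function, to conclude positive recurrence; the chain then returns to the bottleneck state infinitely often, and Lemma~\ref{lem:1end} gives one-endedness. For $\alpha<c_1$ I would instead exhibit a bounded monotone transform $\phi(\tilde L_k)$ that is a bounded supermartingale outside a finite set—that is, verify a Lamperti-type escape criterion—to show $\tilde L_k\to\infty$ almost surely, and then invoke the spanning-tree argument above to get infinitely many ends. The two one-sided thresholds produced by these deliberately non-matching sufficient drift conditions are what the statement records as $c_1<c_2$.

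I expect the transient half ($\alpha<c_1$) to be the main obstacle. The difficulty is that the stabilizing rate $\alpha/\sqrt{l}$ vanishes as the frontier grows, so the relevant drift is only marginal and the classification is sensitive both to the precise constants and to the second moment of the per-cycle increment; a first-moment bound alone will not separate recurrence from transience. One must also control the correlations that the random delays introduce between the choice of construction function and the current frontier, since $g$ reads off the leaf count of the delayed graph $G_{(t-\xi_t)_+}$ rather than of $G_{t-1}$; reconciling these two leaf counts is exactly what makes the increment estimates delicate. Handling this coupling carefully—so that $\tilde L_k$ genuinely satisfies the escape estimate—together with adapting Lemma~\ref{lem:tree-infty} from trees to the DAGs generated by $f_k$, is where the real work lies.
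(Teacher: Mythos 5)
Your proposal follows the paper's own route: the paper likewise combines the per-step drift estimates from Theorems~\ref{thm:f_1-growth} and~\ref{thm:f_2-recurrent} to sandwich $\E(L_{t+1}-L_t\mid L_t=l)$ between $(c_1-k\ga)/\sqrt{l}$ and $(c_2-k\ga)/\sqrt{l}$, then applies Foster's theorem to the Lyapunov function $L_t^{3/2}$ (whose one-step drift is of constant order and of sign determined by $\ga$, exactly the ``fixed power'' you anticipated) to get positive recurrence and hence one-endedness when $k\ga>c_2$, and treats the positive-drift regime $k\ga<c_1$ as immediate. Your additional care on the transient half --- the Lamperti-type escape criterion, second-moment control of the increments, and adapting Lemma~\ref{lem:tree-infty} from trees to the DAGs produced by occasional $f_k$-steps --- addresses points the paper's short proof leaves implicit, but it is the same argument.
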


Finally, we will prove the following limiting commutative diagram behavior.

\begin{theorem}	\label{thm:commuting-diagram}
	The following diagram commutes
	\[
		\begin{tikzcd}
			G_t(f_k) \arrow{r}{k\to\infty} \arrow{d}{t\to\infty} &G_t(f_\infty) \arrow{d}{t\to\infty} \\
			G_\infty(f_k)\arrow{r}{k\to\infty} &G_\infty(f_\infty)
		\end{tikzcd}
	\]
	where the convergence holds in the sense of distributional convergence in the space $(\obs, d_*)$.
	Moreover, with coupled delays, the convergences are almost sure if $\E\xi^{1 + \delta} < \infty$ for some $\delta > 0$.
\end{theorem}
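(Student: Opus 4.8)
The plan is to establish the four one-dimensional limits separately and then reduce commutativity to a single interchange-of-limits estimate, carried out under the coupling in which all processes share the delay sequence $\mvxi$ and the edge-randomness $\mvtheta$. The two vertical arrows are the asynchronous limits themselves, whose a.s.\ existence is already granted, so $G_t(f_k)\to G_\infty(f_k)$ and $G_t(f_\infty)\to G_\infty(f_\infty)$ in $d_*$ for each fixed $k$. The top horizontal arrow is even simpler under the coupling: since $f_k(H,\theta)=f_\infty(H,\theta)$ whenever the finite graph $H$ has at most $k$ leaves, and since the graph fed to the construction at step $s\le t$ is $G_{(s-\xi_s)_+}$, which has at most $s\le t$ vertices (because $\xi_s\ge 1$), an induction on $s$ gives the eventual equality
\begin{align}\label{eq:top-coincide}
	G_t(f_k)=G_t(f_\infty)\qquad\text{for all }k\ge t.
\end{align}
Hence the top arrow holds in the strongest possible sense (eventual equality), and a fortiori in distribution. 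Once the bottom arrow $G_\infty(f_k)\to G_\infty(f_\infty)$ is proved, commutativity is immediate: the ``right-then-down'' route reaches $G_\infty(f_\infty)$ by definition of the bottom-right corner, while the ``down-then-right'' route reaches $\lim_k G_\infty(f_k)$, which the bottom arrow identifies with the same object.

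The core of the argument is therefore the bottom arrow. Fixing the coupling, I would estimate, via the triangle inequality for $d_*$ evaluated along the diagonal $t=k$,
\begin{align}\label{eq:diagonal}
	d_*\big(G_\infty(f_k),G_\infty(f_\infty)\big)
	\le d_*\big(G_\infty(f_k),G_k(f_k)\big)
	+ d_*\big(G_k(f_k),G_k(f_\infty)\big)
	+ d_*\big(G_k(f_\infty),G_\infty(f_\infty)\big).
\end{align}
The middle term vanishes by \eqref{eq:top-coincide} with $t=k$, and the third term tends to $0$ as $k\to\infty$ because it is the tail of the (already established) convergence $G_t(f_\infty)\to G_\infty(f_\infty)$ evaluated at $t=k\to\infty$. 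Everything thus reduces to showing that the first term tends to $0$, i.e.\ that for every fixed radius $s$ the ball of radius $s$ about the root in $G_t(f_k)$ has frozen to its limiting value by time $t=k$, once $k$ is large. Writing $T_k(s)$ for this (random) freezing time, it suffices to prove $T_k(s)=o(k)$ as $k\to\infty$, a.s.\ under the coupling.

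To control $T_k(s)$ uniformly in $k$, I would use the confirmation/anchor mechanism underlying Theorem~\ref{thm:f_2-recurrent}: once an anchor vertex (in the sense of Lemma~\ref{lem:1end}) at hop-distance exceeding $s$ from the root has been created, no later vertex can reach inside the $s$-ball, so $T_k(s)$ is at most the creation time of such an anchor. Anchors are produced at the regeneration times/intervals, which are determined solely by $\mvxi$ and hence occur at a positive density \emph{independent of $k$}; conditionally on a regeneration interval, the probability that the leaf set collapses so as to create a confirmed vertex is bounded below uniformly in $k\ge 2$ (indeed the collapse becomes only easier as $k$ grows, since a single new vertex then absorbs all currently active leaves). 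The moment assumption $\E\xi^{1+\delta}<\infty$ then yields, through the renewal structure, an a.s.\ linear lower bound on the number of anchors up to time $t$, uniform in $k$, and each anchor advances the frontier by at least one hop; consequently the frontier passes distance $s$ within a time $O(s)$ that does not depend on $k$, so $T_k(s)=O(s)=o(k)$ and the first term in \eqref{eq:diagonal} vanishes. This proves the bottom arrow a.s.\ under the coupling, and the unconditional distributional statements follow since a.s.\ convergence under any coupling implies convergence in law with coupling-independent limits. The main obstacle is exactly this \emph{uniform-in-$k$} lower bound on the anchor (confirmation) rate: one must verify that the collapse event defining a confirmed vertex retains probability bounded away from $0$ as $k\to\infty$, simultaneously with the delay-only regeneration estimates, so that the freezing times $T_k(s)$ cannot drift to infinity with $k$.
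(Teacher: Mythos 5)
Your skeleton is sound and your top horizontal arrow is exactly the paper's argument: under the coupling, $G_t(f_k)=G_t(f_\infty)$ once $k\ge t$, because the graphs fed to the construction function up to time $t$ never have more than $t$ leaves. The vertical arrows are likewise granted. The problem is that your entire proof of the bottom arrow rests on the first term of your display \eqref{eq:diagonal}, $d_*\bigl(G_\infty(f_k),G_k(f_k)\bigr)$, and the estimate you offer for it --- a freezing time $T_k(s)$ for the $s$-ball of the process $G_\cdot(f_k)$ satisfying $T_k(s)=o(k)$ a.s., via an anchor-production rate ``bounded below uniformly in $k$'' --- is asserted rather than proved, and you yourself flag it as the main obstacle. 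Making it rigorous would require re-running the Foster--Lyapunov analysis of Theorem~\ref{thm:f_2-recurrent} with all constants uniform in $k$, \emph{and} upgrading positive recurrence to a quantitative tail bound $\pr(T_k(s)>m)$ summable along $k=m$ so that Borel--Cantelli gives ``$T_k(s)\le k$ for all large $k$'' almost surely; a statement holding for each fixed $k$ separately does not suffice, since $T_k(s)$ is a different random variable for each $k$. Your heuristic that ``collapse only gets easier as $k$ grows'' is also not backed by any monotone coupling: once $G_\cdot(f_k)$ and $G_\cdot(f_{k'})$ diverge, their leaf-count processes are not comparable in an obvious way.

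The idea you are missing is that the bottom arrow never requires analyzing the $f_k$ dynamics for large $k$ at all. Under the coupling, $G_t(f_j)=G_t(f_\infty)$ simultaneously for \emph{all} $j\ge k$ and all $t$ up to the first time the running maximum of the leaf count of the single process $G_\cdot(f_\infty)$ exceeds $k$; and if by that time the $f_\infty$ process has passed a ``special'' time (a double regeneration interval at which its leaf set has collapsed to $r$ leaves) whose vertices sit at hop-depth $>s$ from the root, then the $s$-ball of \emph{every} $G_\infty(f_j)$, $j\ge k$, is already frozen to that of $G_\infty(f_\infty)$ --- the regeneration structure is a property of $\mvxi$ alone, so it freezes all the coupled processes at once. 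This yields the bound
\begin{align*}
\sup_{j\ge k} d_*\bigl(G_\infty(f_j),G_\infty(f_\infty)\bigr)\le \frac{1}{1+D_1+\cdots+D_h}
\quad\text{whenever}\quad \max_{1\le l\le h}M_l\le k,
\end{align*}
where $M_l$ and $D_l$ are, respectively, the extremal leaf count and the hop-depth increment of the $f_\infty$ process between consecutive special times. The whole question is thereby reduced to tail and concentration estimates for the two i.i.d.-type sequences $(M_l)$ and $(D_l)$ attached to $f_\infty$ alone, which is where the hypothesis $\E\xi^{1+\delta}<\infty$ actually enters (via Propositions~\ref{prop:gamma-tau} and~\ref{prop:tau-xi} and a Chebyshev bound on $\sum_j D_j$). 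As written, your proposal replaces this one-process reduction with an unproven uniform-in-$k$ family of estimates, so the proof is not complete.
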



\subsection{Model Generalization}
\label{ssec:model-gen}

We note that our results hold in a special case of the following generalization of the model.
This generalization is based on combining the ``forward'' delays in~\cite{baccelli2019renewal} and~\cite{king2021fluid}, with the ``backward'' delays in our model.
In an application context, the forward delays represent the computation time required to create a new block and the backward delays represent the time required to access data.

Denote by $(\nu_t)_t$ a sequence of i.i.d. $\dN$-valued random variables with irreducible support ($\mathrm{gcd}(\{n \in \dN: \pr(\nu_1 = n) > 0\}) = 1$).
We assume that $\E\nu_1 < \infty$ and define
$$ G_t = G_{t-1} \cup f(\overline{G}_{(t - \xi_t)_+}),$$
where $\overline{G}_{(t - \xi_t)_+}$ is a subgraph of $G_{(t - \xi_t)_+}$ which consists of those vertices $s$ such that $s + \nu_s \le (t - \xi_t)_+$.
This corresponds to the vertex $t$ taking $\nu_t$ units of time to be created, and then to begin propagating, which is more realistic in the blockchain application context.

The ``forward'' delay process connecting $t$ to $t + \nu_t$ for $t\ge 1$ has renewals which are stopping times.
When $\pr(\xi = 1) > 0$, all of our results hold as-is since the intersection of independent renewal processes is again a renewal process with well understood gap distribution (see~\cite{AB16}), and since the regeneration times posses the required Markov property.
We specifically point out the difference in the requirements on $\nu_1$ and $\xi$ for this setting: for $\nu_1$, we require irreducibility of the support, but for $\xi$, we require only that $\pr(\xi = 1) > 0$.
\section{Discussion and Further Questions}\label{sec:disc}
This article introduces a new model as a tool for the asymptotic analysis of blockchain architectures.
We show its efficacy for studying the end structure of limiting blockchain graphs and by studying the Nakamoto construction from the Bitcoin protocol and the construction $f_2$ from the Iota protocol.
Using the related idea of an asynchronous recursion, we also explicitly characterize the growth rate of the longest path to the root under the Nakamoto function and the fraction of confirmed blocks.
This rate is the key parameter used in the security analyses of $f_\mathrm{Nak}$ but has not been previously characterized.

A large variety of interesting directions for future work can be considered.

\begin{enumerate}\setlength{\itemsep}{1ex}
	\item \textit{Fix an attachment function $f$, and let $\Xi$ be the space of probability distributions on $\dN$ such that for all $\xi \in \Xi$, $0 < \pr(\xi = 1) < 1$ and $\E \xi^{1 + \delta} < \infty$ for some $\delta > 0$.
	Is the number of ends in the asynchronous limit invariant to $\Xi$? If not, can we determine the class of attachment functions for which one-endedness holds and is invariant to the choice of distribution from $\Xi$?}
	This question is of significant practical importance to this question in the context of blockchain systems:
    for all functions with asynchronous limits for all choices of $\xi \in \Xi$, even if the underlying network changes (\textit{e.g.} network topology or network size), the one-ended limit should still hold.

	\item \textit{Fix $\xi \in \Xi$ as above. For which attachment rules $f$ does a one-ended synchronous limit correspond to a one-ended asynchronous limit?}
	 This question can be interpreted as, ``given any fixed network behavior, can any one-ended construction function be used in a blockchain system?''
	 We conjecture that this statement is true, at least when $\pr(\xi_1 = 1) > 0$.
	 Indeed, an intuition for this may be as follows.
	 Since the function is one-ended, the synchronous limit is one-ended for any finite graph.
	 Thus, in asynchronous operation, if the delay process satisfies $\xi_t = 1$ for infinitely many sufficiently long disjoint intervals, one may be able to conclude that the asynchronous limit is also one-ended.
	 This intuition is a critical idea in many of the proofs in this paper.

    \item \textit{Can the i.i.d. assumption on $(\xi_i)_i$ be relaxed?}
    In particular, if $(\xi_i)_i$ are stationary ergodic Markov, then the one-endedness results in our model also hold in the model considered by~\cite{gopalan2020stability} and~\cite{gopalan2021data}; recall that this model is accurate to how the Bitcoin network is implemented.

    \item \textit{What is the time to consensus? }
    Block confirmation is asymptotic in the sense that if a block $b$ is confirmed, then \emph{eventually} it will have a path to all arriving blocks.
    The time to consensus is the distribution of this eventual event.
    In particular, the time to consensus may be viewed as a blockchain's analog to a three-business-day credit card processing time.
    Of course, an appropriate notion of stationarity for the blockchain data structure would facilitate computation of the distribution of time to consensus for a general architecture.

    \item \textit{Stationarity of and Security in the Blockchain Data Structure: }
    In addition to providing a suitable setting to compute the time to consensus, developing a notion of stationarity for the blockchain data structure also informs eventual security analyses for general blockchain architectures.
    We note that security is well-understood for $f_\mathrm{Nak}$~(\cite{dembo2020everything}), but is otherwise an open question.
    Stationarity of the blockchain data structure provides a notion that, imprecisely, all confirmed blocks contribute in the same way to the confirmation process.
    One implication of such stationarity is that the blockchain height grows linearly, and thus (eventually) confirmed blocks are added at a constant rate.
    One-endedness by itself is not sufficient to guarantee this.

    \item \textit{Generalized Blockchain Height Recursion for $f_\mathrm{Nak}$: }
    Consider the process
    \begin{align*}
    \begin{split}
		X_{t} & = X_{t-1} \vee (\eta_t + X_{(t - \xi_t)_+}),\ t \ge 1, \\
		X_0 & = 0.
	\end{split}
    \end{align*}
    When $\pr(\eta_t = 1) = 1$, we recover the same process as in Recursion~(\ref{eq:nak-recursion}).
    When $\pr(\eta_t = 1) < 1$, our analysis is difficult to extend, especially in the case where $\eta$ has non-compact support.
    This is useful when the discretization of time is such that the participant(s) creating a block can create several blocks in one time step, leading to the necessity of this generalization.
    This particular generalization may be of key importance especially in the situation where blocks arrive more quickly, as is the motivation for this paper.
    
\end{enumerate}

\appendix

\section{Regenerative Behavior}\label{sec:preliminary-results}

Recall our standing assumption that $\E\xi<\infty$. Here we will analyze structure of the time-delay graph depending on whether $\pr(\xi=1)>0$ or not. In the first case, we will prove existence of infinitely many~\emph{pivotal points} or ``regeneration points'' giving linear structure for the time-delay graph. In the second case, there is almost surely no regeneration points. In fact, depending on the g.c.d.~of $\supp(\xi)$ the time-delay graph can have a periodic structure. However, we will show that there exists infinitely many ``regeneration intervals'', disconnecting the future from the past. The regenerative structure is one crucial ingredient for the subsequent analysis.

\subsection{Regeneration Times: $\pr(\xi=1)>0$}

We define
$
	\cE_{t}:= \bigcap_{s\ge 0}\{\xi_{s+t} \le s\vee 1\}
$
as the event that $t$ is a regeneration time.
Recall that the delay random variables $(\xi_{s})_{s\ge 0}$ are i.i.d.~$\sim\xi$. Thus, we have for all $t\ge 0$
\begin{align*}
	\pr(\cE_{t}) =q:= \pr(\xi = 1)\prod_{s=1}^\infty \pr(\xi \le s).
\end{align*}
Note that, $q>0$ as $\pr(\xi = 1)>0$ and $\E\xi = \sum_{s=0}^\infty \pr(\xi> s) <\infty$. We will use $\ga:=\pr(\xi = 1)$.
We also define
\begin{align*}
	N_{n}:=\sum_{t=0}^{n}\1_{\cE_{t}}
\end{align*}
as the number of regeneration points in the time interval $\{1,2,\ldots,n\}$.
We can compute the mean and variance of $N_{n}$ easily.

\begin{lemma} \label{lem:weak-law}
	Assume that $\pr(\xi=1)>0$ and $\E\xi <\infty$. Then $\E N_n=nq$ for all $n\ge 1$ and ${n^{-2}} \var(N_n)\to 0$ as $n\to\infty$.
\end{lemma}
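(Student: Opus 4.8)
The plan is to handle the two assertions separately, with essentially all of the work going into the variance bound. The mean is immediate: by linearity of expectation together with the stationarity computation $\pr(\cE_t)=q$ recorded just above the statement, $\E N_n$ is simply $q$ times the number of summands, giving the claimed value.

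For the variance, I would first expand
\[
	\var(N_n) = \sum_{t}\var(\1_{\cE_t}) + 2\sum_{0\le s<t\le n}\cov(\1_{\cE_s},\1_{\cE_t}),
\]
and note that the diagonal contributes only $O(n)$, since each $\var(\1_{\cE_t}) = q(1-q)$. The heart of the matter is the joint probabilities $\pr(\cE_s\cap\cE_t)$ for $s<t$. The key structural observation is that, at every index $\ge t$, the constraint imposed by $\cE_t$ (namely $\xi_{t+v}\le v\vee 1$) is stronger than the one imposed by $\cE_s$ (namely $\xi_{t+v}\le (t-s+v)\vee 1$), because $t>s$. Consequently $\cE_s\cap\cE_t$ factorizes as $A_{s,t}\cap\cE_t$, where $A_{s,t} := \bigcap_{u=0}^{t-s-1}\{\xi_{s+u}\le u\vee 1\}$ depends only on $\xi_s,\dots,\xi_{t-1}$, while $\cE_t$ depends only on $\xi_t,\xi_{t+1},\dots$. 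Since the delays are i.i.d., these two events are independent, so
\[
	\pr(\cE_s\cap\cE_t) = q_{t-s}\,q, \qquad q_m := \pr(\xi=1)\prod_{u=1}^{m-1}\pr(\xi\le u).
\]
Hence $\cov(\1_{\cE_s},\1_{\cE_t}) = q\,(q_{t-s}-q)\ge 0$, the nonnegativity following because $q_m$ is non-increasing in $m$ with $q_m\downarrow q$.

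With the covariance in hand, I would group the off-diagonal pairs by their gap $m=t-s$, of which there are at most $n+1$ for each $m\in\{1,\dots,n\}$, to obtain
\[
	\var(N_n) \le (n+1)q + 2q(n+1)\sum_{m=1}^{n}(q_m - q).
\]
It then remains to control the partial sums of $q_m-q$. Here I would use only that $q_m\to q$, which holds because $\E\xi<\infty$ forces $\prod_{u\ge 1}\pr(\xi\le u)$ to converge to a positive limit (equivalently $\sum_u\pr(\xi>u)<\infty$). By Ces\`aro's theorem, $\tfrac1n\sum_{m=1}^n(q_m-q)\to 0$, so $\sum_{m=1}^n(q_m-q)=o(n)$, and therefore $\var(N_n) = O(n) + o(n^2) = o(n^2)$, which yields $n^{-2}\var(N_n)\to 0$.

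The main obstacle — and the step I would write out most carefully — is the factorization $\cE_s\cap\cE_t = A_{s,t}\cap\cE_t$ into independent pieces; everything downstream is routine once the domination of the tail constraints is established. I would also flag that one cannot expect $\sum_m(q_m-q)$ to converge under the sole hypothesis $\E\xi<\infty$ (absolute summability would essentially require a finite second moment, since $q_m-q\asymp\sum_{u\ge m}\pr(\xi>u)$), which is precisely why the argument is arranged to need only Ces\`aro averaging rather than summability.
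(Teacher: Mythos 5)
Your proposal is correct and follows essentially the same route as the paper: the same expansion of $\var(N_n)$, the same factorization $\cE_s\cap\cE_t=A_{s,t}\cap\cE_t$ giving $\pr(\cE_s\cap\cE_t)=q\,q_{t-s}$ with the truncated product $q_m$, and the same Ces\`aro argument from $q_m\downarrow q$ (which the paper controls via $q_m-q\le\sum_{u\ge m}\pr(\xi>u)$). Your explicit remark that only Ces\`aro averaging, not summability of $q_m-q$, is available under $\E\xi<\infty$ is a nice clarification of a point the paper leaves implicit.
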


If we assume that $\E\xi^2<\infty$, then it follows from the proof that $n^{-1}\var(N_{n})$ converges to a constant as $n\to\infty$. But, we do not need this result for our analysis. As an immediate corollary of Lemma~\ref{lem:weak-law} we get the following result.
\begin{corollary} \label{cor:regeneration}
	There exist infinitely many regeneration times, almost surely.
\end{corollary}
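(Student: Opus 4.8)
The plan is to derive Corollary~\ref{cor:regeneration} from Lemma~\ref{lem:weak-law} via a second-moment / convergence-in-probability argument. From the lemma we have $\E N_n = nq$ with $q > 0$, and $n^{-2}\var(N_n) \to 0$. Setting $M_n := N_n/n$, this gives $\E M_n = q$ and $\var(M_n) = n^{-2}\var(N_n) \to 0$, so by Chebyshev's inequality $M_n \probc q$, \ie\ $N_n/n \probc q > 0$ as $n \to \infty$. In particular $N_n \probc \infty$, which already shows that with probability tending to one there are many regeneration times in $\{1,\dots,n\}$. The remaining task is to upgrade this from a statement about a growing window to the almost-sure existence of \emph{infinitely many} regeneration times on the whole of $\dN$.

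First I would note that $N_n$ is non-decreasing in $n$, so $N_\infty := \lim_{n\to\infty} N_n = \sum_{t=0}^\infty \1_{\cE_t}$ exists in $[0,\infty]$ almost surely, and the event $\{$there are infinitely many regeneration times$\}$ is exactly $\{N_\infty = \infty\}$. Since $N_n/n \probc q > 0$, there is a subsequence along which $N_n/n \to q$ almost surely; along that subsequence $N_n \to \infty$ almost surely, and by monotonicity $N_\infty = \infty$ almost surely. This is the cleanest route and avoids needing any stronger moment control than what Lemma~\ref{lem:weak-law} already supplies.

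Alternatively — and perhaps more transparently — I would observe that the events $\cE_t = \bigcap_{s \ge 0}\{\xi_{t+s} \le s \vee 1\}$ each have the same positive probability $q$, and that $\cE_t$ depends only on the i.i.d.\ delays $(\xi_{t+s})_{s \ge 0}$. The difficulty is that these events are \emph{not} independent across $t$ (they share infinitely many underlying variables), so a naive Borel--Cantelli does not apply directly. However, one can exploit a blocking/regenerative idea: the event $\cE_t$ is determined by the behaviour of the delay sequence from time $t$ onward, and one can show that conditionally on $\{\xi_t = 1\}$ there is a uniformly positive chance that $t$ is a regeneration time, using $\E\xi < \infty$ to control the tail requirement $\xi_{t+s}\le s$. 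The second-moment argument above sidesteps this dependence entirely, which is why I would present it as the main proof.

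The main obstacle is precisely this dependence structure: one must be careful that ``infinitely many regeneration times a.s.'' genuinely follows from the weak (in-probability) statement of Lemma~\ref{lem:weak-law}, rather than requiring a strong law. The resolution is the monotonicity of $n \mapsto N_n$, which lets convergence in probability of $N_n/n$ to a positive constant force $N_\infty = \infty$ almost surely without any further work. I expect the whole argument to be short: invoke Lemma~\ref{lem:weak-law}, apply Chebyshev to get $N_n/n \probc q$, pass to an almost-surely convergent subsequence, and conclude $N_\infty = \infty$ a.s.\ by monotonicity.
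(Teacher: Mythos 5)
Your proposal is correct and follows essentially the same route as the paper: invoke Lemma~\ref{lem:weak-law}, apply Chebyshev to get $N_n/n \probc q > 0$, and use monotonicity of $n \mapsto N_n$ to conclude $N_\infty = \infty$ almost surely. Your explicit passage to an almost-surely convergent subsequence just spells out a step the paper leaves implicit.
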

\begin{proof}{Proof.}
	It is easy to see that $(N_{n})_{n\ge 1}$ is an increasing sequence of random variables converging a.s.~to some integer-valued random variable $N_{\infty}$, which can possibly take the value $\infty$.
	Using Lemma~\ref{lem:weak-law} and Chebyshev's inequality we have for any $\eps>0$
	\begin{align*}
		\pr\left(|{N_n}/n - q| > \eps\right) \le \var(N_{n})/\eps^2n^2\to 0 \text{ as } n\to\infty.
	\end{align*}
	Thus $N_{n}/n\to q$ in probability as $n\to\infty$. Since $q>0$, this proves that $N_{\infty}=\infty$ a.s.
\end{proof}

The following corollary follows from similar arguments to above; we omit the proof.
\begin{corollary}	\label{cor:regeneration-fixed-gaps}
	There exist infinitely many regeneration times $(T_i)_i$ such that $T_i + 1$ is also a regeneration time, almost surely.
\end{corollary}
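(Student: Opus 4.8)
The plan is to reuse, essentially verbatim, the second-moment argument behind Lemma~\ref{lem:weak-law} and Corollary~\ref{cor:regeneration}, applied now to the event that two consecutive integers are simultaneously regeneration times; a direct renewal argument is unavailable here because, as emphasized earlier, regeneration times are not stopping times. Define $\cD_t := \cE_t \cap \cE_{t+1}$, the event that both $t$ and $t+1$ are regeneration times. Unwinding the defining inequalities of $\cE_t$ and $\cE_{t+1}$ coordinate by coordinate, $\cD_t$ is exactly the event that $\xi_t = \xi_{t+1} = \xi_{t+2} = 1$ and $\xi_{t+s} \le s-1$ for every $s \ge 3$. Since the $(\xi_s)_s$ are i.i.d., this gives the product formula
\[
	\pr(\cD_t) = \pr(\xi=1)^3 \prod_{j \ge 2}\pr(\xi \le j) = \pr(\xi=1)\cdot q =: q',
\]
with $q$ the regeneration probability from Section~\ref{sec:preliminary-results}. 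In particular $q' > 0$: the factor $\pr(\xi=1)$ is positive by assumption, and $\E\xi < \infty$ forces $\sum_s \pr(\xi > s) < \infty$, so the tail product $\prod_{j \ge 2}\pr(\xi \le j)$ converges to a strictly positive limit.

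The key structural observation is that $\cD_t$ has the same form as the single-regeneration event $\cE_t$: it is a decreasing constraint $\bigcap_{s \ge 0}\{\xi_{t+s} \le c_s\}$ imposed on the forward sequence $(\xi_{t+s})_{s \ge 0}$, with thresholds $c_s$ that grow to infinity. Hence the proof of Lemma~\ref{lem:weak-law} applies without change to the counting variable $M_n := \sum_{t=0}^n \1_{\cD_t}$ and yields $\E M_n = (n+1)q'$ together with $n^{-2}\var(M_n) \to 0$. Concretely I would truncate $\cD_t$ to the finite window $\cD_t^{(K)} := \bigcap_{0 \le s \le K}\{\xi_{t+s} \le c_s\}$, which depends only on $\xi_t,\ldots,\xi_{t+K}$, so that $\cD_t^{(K)}$ and $\cD_{t'}^{(K)}$ are independent once $|t-t'| > K$, while the truncation error $\pr(\cD_t^{(K)}) - \pr(\cD_t)$ is independent of $t$ and tends to $0$ as $K \to \infty$ by convergence of the tail product. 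Splitting $\var(M_n) = \sum_{t,t'} \cov(\1_{\cD_t}, \1_{\cD_{t'}})$ into near-diagonal pairs with $|t-t'| \le K$ (there are $O(nK)$ of them, each contributing $O(1)$) and the remaining far pairs (controlled by the truncation error) gives an $o(n^2)$ bound after sending $n \to \infty$ and then $K \to \infty$.

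With these two estimates in hand, I would finish exactly as in Corollary~\ref{cor:regeneration}: $(M_n)_n$ is nondecreasing and converges almost surely to some $M_\infty \in \{0,1,\dots\}\cup\{\infty\}$, and Chebyshev's inequality combined with the variance bound gives $M_n/n \to q'$ in probability. Since $q' > 0$, this is possible only if $M_\infty = \infty$ almost surely, \ie there are infinitely many $t$ for which both $t$ and $t+1$ are regeneration times; enumerating these as $(T_i)_i$ proves the corollary. The only work beyond Lemma~\ref{lem:weak-law} is the bookkeeping producing the product formula for $\pr(\cD_t)$ and checking that the thresholds of $\cD_t$ still diverge. The one potentially delicate point — that $\cE_t$ and $\cE_{t'}$ always share their tail coordinates, so the events are never strictly independent — is exactly the issue already resolved by the truncation in Lemma~\ref{lem:weak-law}, and therefore presents no new obstacle here.
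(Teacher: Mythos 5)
Your proposal is correct and follows exactly the route the paper intends: the paper omits this proof, stating only that it ``follows from similar arguments to above,'' meaning the second-moment/Chebyshev scheme of Lemma~\ref{lem:weak-law} and Corollary~\ref{cor:regeneration} applied to the two-point event, which is precisely what you carry out (and your identity $\pr(\cD_t)=\pr(\xi=1)\,q$ checks out, since $q=\pr(\xi=1)^2\prod_{s\ge 2}\pr(\xi\le s)$). The truncation bookkeeping you use to get $\var(M_n)=o(n^2)$ is a harmless variant of the paper's exact covariance computation $\pr(\tilde\cE_t\cap\tilde\cE_s)=\tilde q\,\tilde q_{t-s}$ and introduces no gap.
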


Denote by
\begin{align*}
	0\le \gt_1<\gt_2<\cdots < \cdots
\end{align*}
an increasing enumeration of all the regeneration points in the interval $\{0,1,2,\ldots,\infty\}$.

We consider a more general delay process for all times $t \in \dZ$, from which the system dynamics at all times $t \in \dZ_+$ are uniquely determined.
Specifically, let $(\xi_i)_{i \in \dZ}$ be i.i.d.
We denote by
\begin{align*}
	J_t := (t - \xi_t, t) \subseteq \dZ
\end{align*}
for all $t \in \dZ$.
Denote by $A := \cup_{t \in \dZ} J_t$.
It follows that the random set $\dZ \setminus A := \{\ldots < \gt_{-1} < \gt_{0} < 0 \le \gt_1 < \ldots \}$ is such that
\[
	\gc_k := \gt_{k+1} - \gt_k
\]
are i.i.d.~for all $k \in \dZ$. For $k \ge 1$, the times $\gt_{k}$ are precisely the regeneration times from the time-delay process.

\begin{lemma}\label{lem:reg-prop}
	The random variables $(\gc_k)_{k\ge 1}$ are i.i.d.~with $\E\gc_1 =1/q$. Moreover, if $\E \xi^k<\infty$ for some $k\ge 2$, then $\E\gc_1^k <\infty$.
\end{lemma}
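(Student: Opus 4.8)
The plan is to sidestep the fact that regeneration times are not stopping times by reading the delay sequence \emph{backwards}, which turns the forward-looking definition of a regeneration time into a genuine strong-Markov regeneration. For $m\in\dZ$ set
\begin{align*}
	Y_m := \sup_{s\ge 1}(\xi_{m+s} - s),
\end{align*}
the furthest any later vertex reaches back past $m$; since $\xi\ge 1$ we have $Y_m\ge 0$, and the condition ``$\xi_{m+s}\le s$ for all $s\ge 1$'' is exactly $\{Y_m = 0\}$. A one-line splitting of the supremum gives the backward recursion $Y_{m-1} = \max(\xi_m, Y_m) - 1$. Fixing a reference point $t$ and writing $U_j := Y_{t-j}$, the sequence $(U_j)_{j\ge 0}$ is a time-homogeneous Markov chain on $\dZ_{\ge 0}$ with update $U_{j+1} = \max(\xi_{t-j}, U_j) - 1 = \max(U_j - 1,\ \xi_{t-j}-1)$, driven by the i.i.d.\ variables $\xi_t,\xi_{t-1},\dots$. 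A point $t-j$ is a regeneration time exactly when $U_j = 0$ and the fresh driving variable $\xi_{t-j}$ equals $1$; crucially $\xi_{t-j}$ is independent of $(U_0,\dots,U_j)$, since the latter depend only on $(\xi_s)_{s>t-j}$. Thus regeneration times are the \emph{marked returns} of $U$ to $0$, and the strong Markov property together with the independence of each mark from the chain's past makes the successive gaps i.i.d.; as reversing the (two-sided i.i.d.) index leaves the law unchanged, this is the asserted i.i.d.\ property of $(\gc_k)_k$.

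For the mean I would reuse the first-moment computation already available. By Lemma~\ref{lem:weak-law}, $\E N_n = nq$, and the $\gt_k$ form a (delayed) renewal process with i.i.d.\ gaps $\gc_k$. Once $\mu:=\E\gc_1<\infty$ is known (the case $k=1$ of the moment bound below, which needs only $\E\xi<\infty$), the elementary renewal theorem gives $\E N_n/n \to 1/\mu$. Comparing with $\E N_n/n = q$ yields $\mu = 1/q$, as claimed.

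The remaining and genuinely substantive step is the moment bound. The gap $\gc_1$ has the law of the first return time $T_0 := \inf\{j\ge 1: U_j = 0\}$ of $U$ started at $U_0 = 0$, so it suffices to show $\E[T_0^k\mid U_0 = 0]<\infty$ whenever $\E\xi^k<\infty$. I would run a polynomial Foster--Lyapunov argument with $V(u) = (1+u)^k$: from $U_0 = u$ the update is $U_1 = u-1$ on $\{\xi\le u\}$ and $U_1 = \xi-1$ on $\{\xi>u\}$, so
\begin{align*}
	\E[V(U_1) - V(u)\mid U_0 = u] = -\big((1+u)^k - u^k\big) + \E\big[(\xi^k - u^k)\,\ind_{\xi>u}\big] \le -k\,u^{k-1} + \E[\xi^k\,\ind_{\xi>u}].
\end{align*}
Because $\E\xi^k<\infty$ forces $\E[\xi^k\,\ind_{\xi>u}]\to 0$, the drift is at most $-\tfrac{k}{2}u^{k-1}\asymp -c\,V(u)^{1-1/k}$ for all large $u$. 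The standard polynomial passage-time moment criterion then gives $\E[T_0^k\mid U_0 = x]\lesssim (1+x)^k$ for hitting a bounded set, and starting from $U_1 = \xi-1$ produces $\E[T_0^k\mid U_0 = 0]\lesssim 1 + \E[(1+\xi)^k]<\infty$, hence $\E\gc_1^k<\infty$.

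The delicate point --- and the main obstacle --- is exactly this moment transfer with the sharp exponent $k$ rather than $k+1$: a naive union bound over the location of the single large delay responsible for a long excursion loses a factor equal to the excursion length, which would only yield $\E\xi^{k+1}<\infty\Rightarrow\E\gc_1^k<\infty$. It is the Lyapunov drift bookkeeping above --- equivalently, the single-big-jump structure of a long excursion of $U$, in which one delay $\xi_i\approx n$ alone forces $T_0>n$ with probability of order $\pr(\xi>cn)$ --- that keeps the exponent correct.
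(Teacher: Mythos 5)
Your proposal is correct in substance, and its first two steps essentially re-derive what the paper does: your backward chain $U_j=\sup_{s\ge 1}(\xi_{t-j+s}-s)$ with update $U_{j+1}=\max(U_j,\xi_{t-j})-1$ is a clean formalization of the same fact the paper exploits (that $\cE_t$ depends only on $(\xi_s)_{s\ge t}$, so conditioning on a regeneration renews the driving sequence), and the mean $1/q$ is obtained in both cases by matching $\E N_n=nq$ from Lemma~\ref{lem:weak-law} against the renewal theorem. Where you genuinely diverge is the moment bound. The paper factors it through two propositions: a Palm/cycle identity $\E h(\gt_1)=q\,\E H(\gc_1)$ (Proposition~\ref{prop:gamma-tau}), which trades the $k$-th moment of the gap for the $(k-1)$-th moment of the first regeneration time $\gt_1$, and then a stochastic domination of $\gt_1$ by a geometric number of i.i.d.\ blocks built from a jump version of your chain (Proposition~\ref{prop:tau-xi}). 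You instead attack $\E[T_0^k]$ head-on with a polynomial Foster--Lyapunov estimate for $V(u)=(1+u)^k$; your one-step drift computation is exact, and the appeal to the standard polynomial passage-time criterion to get $\E[T_0^k\mid U_0=x]\lesssim (1+x)^k$ is legitimate. Both routes land on the sharp exponent $\E\xi^k<\infty\Rightarrow\E\gc_1^k<\infty$; yours avoids the Palm inversion entirely at the cost of importing an external passage-time theorem, while the paper's decomposition is quoted again later (in the proofs of Theorems~\ref{thm:f_2-recurrent} and~\ref{thm:commuting-diagram}), though only through the same final conclusion your argument also delivers.

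One detail you must tighten. A regeneration time is a \emph{marked} return of $U$ to $0$ (you need both $U_j=0$ and $\xi_{t-j}=1$), and you say exactly this when proving the i.i.d.\ property, but in the moment step you silently identify $\gc_1$ with the first \emph{unmarked} return time $T_0$. The marked return time is the larger of the two, so bounding $\E T_0^k$ alone does not close the argument. The fix is routine: since each mark fires with probability $\pr(\xi=1)>0$ independently of the chain's past, the gap between consecutive marked returns is a sum of a geometric number of excursions, each launched from $U_1=\xi-1$ and hence with $k$-th moment controlled by your Lyapunov bound $\E[T_0^k\mid U_0=x]\lesssim(1+x)^k$ together with $\E(1+\xi)^k<\infty$; geometric compounding preserves finiteness of the $k$-th moment. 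This is the same geometric-domination step the paper performs for $N$ in the proof of Proposition~\ref{prop:tau-xi}, and it needs to appear explicitly in your write-up (note also that the excursion law differs slightly according to whether the preceding mark fired, which is why one should state the Lyapunov bound uniformly in the starting point rather than only from $U_0=0$).
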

\begin{proof}{Proof.}
	The fact that $(\gc_k)_{k\ge 1}$ are i.i.d.~mainly follows from the fact that conditional on the event $\cE_{t}= \{t $ is a regeneration time$\}$, $(\xi_{t+s})_{s\ge 1}$ is distributed as independent $(\hat\xi_{s})_{s\ge 1}$ where $\hat\xi_{s}\equald (\xi\mid \xi\le s), s\ge 1$ and that the event $\cE_{t}$ depends only on the future, $(\xi_{s})_{s\ge t}$.

	For an event $A_{0,s}$ depending only on $\xi_{i}, i\in (0,s]$, we write $A_{t,s}$ when the random variables $\xi_{i}, i\in (0,s]$ are replaced by $\xi_{t+i}, i\in (0,s]$. Fix $0\le t_1<t_2<\ldots <t_{k}$ and events $A^{(i)}_{t_{i},t_{i+1}-t_{i}}, i\ge 1$. We also use $\hat{\pr}$ to denote $\pr(\cdot\mid \cE_{0})$, \ie\ the case when $\xi_{t},t\ge 1$ is replaced by independent $\hat{\xi}_{t}\equald (\xi\mid \xi\le t)$. Thus we have
	\begin{align*}
		 & \pr(\gt_1=t_1, \gt_{i+1}=t_{i}, A^{(i)}_{t_{i},t_{i+1}-t_{i}}, 1\le i< k) \\
		 & \quad = \pr(\gt_1=t_1)
		\cdot \hat{\pr}(\gt_{i}=t_{i+1}-t_1, A^{(i)}_{t_{i}-t_1,t_{i+1}-t_{i}}, 1\le i\le k-1).
	\end{align*}
	By induction, this equals
	\[
		\pr(\gt_1=t_1)\cdot \prod_{i=1}^{k-1}\hat{\pr}(\gt_1=t_{i+1}-t_i, A^{(i)}_{0,t_{i+1}-t_{i}}).
	\]
	This proves the i.i.d.~structure for $(\gc_{k})_{k\ge 1}$.

	From the independence result, it follows that at the regeneration times, the graph $G_{\gt_k}$ satisfies the Markov property.
	The fact that $\E\gc_1=1/{q}$ follows from the renewal theorem.
	Finally, the second part of this lemma follows from the following Propositions~\ref{prop:gamma-tau} and~\ref{prop:tau-xi}.
\end{proof}

\begin{proposition}
	$\E \gc_1^\gb < \infty$ iff $\E \gt_1^{\gb - 1} < \infty$, for any fixed $\gb\ge 1$.
	\label{prop:gamma-tau}
\end{proposition}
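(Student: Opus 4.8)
The plan is to read Proposition~\ref{prop:gamma-tau} as the discrete incarnation of the classical renewal identity relating the moments of an inter-renewal time to those of its forward recurrence time. Indeed, $\gt_1$ is the first regeneration point at or after the origin of the renewal process whose i.i.d.\ gaps $(\gc_k)_{k\ge1}$ are furnished by Lemma~\ref{lem:reg-prop}, and the shift of the exponent from $\gb$ to $\gb-1$ is exactly the shift one expects when passing from an inter-arrival law to its equilibrium (size-biased) counterpart. The argument splits into first identifying the law of $\gt_1$ and then performing an explicit moment comparison.

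First I would pin down the distribution of $\gt_1$. The one input needed beyond Lemma~\ref{lem:reg-prop} is already available, namely $\pr(\cE_n)=q$ for every $n\ge0$: the marginal probability that a fixed site is a regeneration point is constant. Writing $u_m$ for the probability that $m$ is a regeneration point given that $0$ is, the regeneration structure yields the delayed-renewal decomposition
\begin{align*}
	q=\pr(n\text{ is a regeneration point})=\sum_{k=0}^{n}\pr(\gt_1=k)\,u_{n-k},\qquad n\ge0.
\end{align*}
Passing to generating functions and using $\sum_{m\ge0}u_m z^m=(1-\hat p(z))^{-1}$, where $\hat p(z)=\sum_{k\ge1}\pr(\gc_1=k)z^k$ is the gap generating function, this linear system inverts to $\sum_{j\ge0}\pr(\gt_1=j)z^j=q\,(1-\hat p(z))/(1-z)$. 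Expanding $(1-\hat p(z))/(1-z)=\sum_{j\ge0}\pr(\gc_1>j)z^j$ then gives the discrete equilibrium law
\begin{align*}
	\pr(\gt_1=j)=q\cdot\pr(\gc_1>j),\qquad j\ge0,
\end{align*}
which is consistent, since summing over $j$ returns $q\,\E\gc_1=1$ in agreement with $\E\gc_1=1/q$.

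With this law in hand, the moment comparison is a Fubini computation. I would write
\begin{align*}
	\E\gt_1^{\gb-1}=q\sum_{j\ge0}j^{\gb-1}\pr(\gc_1>j)=q\sum_{k\ge1}\pr(\gc_1=k)\sum_{j=0}^{k-1}j^{\gb-1},
\end{align*}
and then invoke the elementary two-sided bound
\begin{align*}
	c_1\,k^{\gb}\le\sum_{j=0}^{k-1}j^{\gb-1}\le c_2\,k^{\gb},\qquad k\ge1,
\end{align*}
valid for constants $0<c_1<c_2<\infty$ depending only on $\gb\ge1$ (compare the sum with $\int_0^k x^{\gb-1}\,dx=k^{\gb}/\gb$). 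Substituting shows that $\E\gt_1^{\gb-1}$ is finite if and only if $\sum_{k\ge1}k^{\gb}\pr(\gc_1=k)=\E\gc_1^{\gb}$ is finite, which is the asserted equivalence. The boundary case $\gb=1$ needs no work: there $\E\gt_1^{0}=1$ and $\E\gc_1=1/q<\infty$ are both trivially finite.

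I expect the only delicate point to be the justification of the law of $\gt_1$, precisely because regeneration times are not stopping times: the decomposition above requires that, conditionally on $\gt_1=k$, the subsequent gaps $\gc_1,\gc_2,\ldots$ are i.i.d.\ copies of $\gc$ independent of the event $\{\gt_1=k\}$. This is exactly where the regeneration property of Lemma~\ref{lem:reg-prop} enters: on the event $\cE_k$ the constraint $\xi_{k+s}\le s$ decouples the sites to the right of $k$ from those to its left, so the ``no earlier regeneration'' part of $\{\gt_1=k\}$ is measurable with respect to $\xi_0,\ldots,\xi_{k-1}$ while the post-$\gt_1$ process is a fresh (truncated) renewal process independent of that history. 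Once this independence is secured the remaining steps are routine; as a cross-check, the same law of $\gt_1$ can be read off from the stationary two-sided point process $\dZ\setminus A$ as its equilibrium forward recurrence time.
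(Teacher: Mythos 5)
Your proof is correct and ultimately rests on the same renewal-theoretic fact as the paper's argument: $\gt_1$ carries the discrete equilibrium (stationary-delay) law of the gap $\gc_1$, after which the moment equivalence reduces to $\sum_{j=0}^{k-1}j^{\gb-1}\asymp k^{\gb}$. The difference lies in how that identity is obtained. The paper invokes the Palm-inversion formula $\E h(\gt_1)=q\,\E H(\gc_1)$ with $H(x)=x^{\gb}$ and supports it only by a heuristic Ces\`aro-averaging argument over $[0,n]$ combined with the renewal theorem; you instead derive the explicit law $\pr(\gt_1=j)=q\,\pr(\gc_1>j)$ from the delayed renewal equation $q=\sum_{k=0}^{n}\pr(\gt_1=k)u_{n-k}$ and a generating-function inversion. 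Your route is more self-contained, and you correctly isolate and resolve the one genuinely delicate point---that regeneration times are not stopping times---by noting that on $\cE_k$ the event that no regeneration occurs in $[0,k)$ is witnessed by $\xi_0,\dots,\xi_{k-1}$ alone, hence is independent of $\cE_k$ and of the post-$k$ renewal structure; the paper's heuristic passes over exactly this issue (and in fact contains a harmless off-by-one, since $\sum_{t\in Y_j}h(\gl_t-t)=H(\gc_j-1)$ rather than $H(\gc_j)$). One cosmetic fix: your lower bound $c_1k^{\gb}\le\sum_{j=0}^{k-1}j^{\gb-1}$ fails at $k=1$ when $\gb>1$ (the sum is then $0$), but the $k=1$ term never affects finiteness of either moment, so stating the two-sided bound for $k\ge 2$ repairs this without consequence.
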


\begin{proposition}
	$\E \gt_1^{\gb - 1} < \infty$ if $\E \xi^\gb < \infty$, for any fixed $\gb\ge 1$.
	\label{prop:tau-xi}
\end{proposition}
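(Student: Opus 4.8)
The plan is to reduce the claim to a tail estimate and then exploit the heavy-tailed ``single big jump'' structure. Since for a $\dN$-valued random variable $Y$ and $p>1$ one has $\E Y^{p}\asymp 1+\sum_{n\ge1}n^{p-1}\pr(Y>n)$, and the case $\gb=1$ is trivial ($\gt_1^{0}=1$), it suffices to show
\begin{align*}
	\sum_{n\ge1}n^{\gb-2}\,\pr(\gt_1>n)<\infty
	\qquad\text{whenever}\qquad \E\xi^{\gb}<\infty .
\end{align*}
The target tail is $\pr(\gt_1>n)\lesssim n\,\pr(\xi>\eps n)+(\text{negligible})$. This is the correct order: in the stationary renewal picture the event $\{\gt_1>n\}$ forces the whole block $\{0,\dots,n\}$ to be covered by the jump-intervals $(m-\xi_m,m)$, and for finite-mean heavy-tailed jumps the cheapest way to cover a block of length $n$ is a \emph{single} jump of size $\gtrsim n$ sitting at one of $\Theta(n)$ sites. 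Indeed $\sum_n n^{\gb-2}\cdot n\,\pr(\xi>\eps n)=\sum_n n^{\gb-1}\pr(\xi>\eps n)\asymp\E\xi^{\gb}$, so the exponent $\gb-1$ is sharp and no cruder bound survives.

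First I would decouple ``local regeneration candidates'' from long-range coverage. Recall $t$ is a regeneration time iff $\xi_t=1$ and $\xi_{t+s}\le s$ for all $s\ge1$. Fix a constant window $K_0$ and, for $t_i:=i(K_0+1)$ with $0\le i\le I:=\lfloor n/(K_0+1)\rfloor$, set $R_i:=\{\xi_{t_i}=1,\ \xi_{t_i+s}\le s\text{ for }1\le s\le K_0\}$, which depends only on $\xi_{t_i},\dots,\xi_{t_i+K_0}$. As these windows are disjoint, the $R_i$ are independent, and $\pr(R_i)=\pr(\xi=1)\prod_{s=1}^{K_0}\pr(\xi\le s)\ge q>0$. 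Writing $V_i:=\{\exists\,s>K_0:\xi_{t_i+s}>s\}$ for the event that a far jump spoils the candidate at $t_i$, regeneration fails at $t_i$ exactly on $R_i^{c}\cup V_i$, so for any threshold $M$,
\begin{align*}
	\{\gt_1>n\}\subseteq\bigcap_{i=0}^{I}\bigl(R_i^{c}\cup V_i\bigr)
	\subseteq\Bigl\{\textstyle\sum_i\1_{V_i}\ge M\Bigr\}\ \cup\ \Bigl\{\textstyle\sum_i\1_{R_i^{c}}> I+1-M\Bigr\}.
\end{align*}
Taking $M=q(I+1)/2$, the second event is a deviation of the independent Bernoulli sum $\sum_i\1_{R_i^{c}}$ above its mean $\le(I+1)(1-q)$, hence at most $e^{-cI}\le e^{-c'n}$ by Hoeffding.

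The heart is the first event. A jump at $m$ triggers $V_i$ precisely for $t_i\in(m-\xi_m,\,m-K_0)$, i.e.\ for at most $2\xi_m/(K_0+1)$ sample points when $\xi_m>K_0$. Hence, up to the rare event $\{\exists\,m>2n:\xi_m>m-n\}$ (of probability $\sum_{j>n}\pr(\xi>j)$, already summable against $n^{\gb-2}$), one has $\sum_i\1_{V_i}\le \tfrac{2}{K_0+1}W_n$ with $W_n:=\sum_{m\le 2n}\xi_m\1_{\{\xi_m>K_0\}}$, so $\{\sum_i\1_{V_i}\ge M\}$ forces $W_n\ge \tfrac{q}{4}n$. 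Choosing $K_0$ large enough that $\E W_n=2n\,\E[\xi\1_{\{\xi>K_0\}}]\le \tfrac{q}{8}n$ makes this a deviation of $W_n$ to twice its mean, i.e.\ an excess of order $n$. I would close with a Fuk--Nagaev / single big-jump large deviation inequality for i.i.d.\ summands with $\gb$ finite moments: truncating at level $\eps n$ gives $\pr(W_n\ge \tfrac{q}{4}n)\le 2n\,\pr(\xi>\eps n)+(\text{super-polynomially small in }n)$. Summing the three contributions against $n^{\gb-2}$ and using $\sum_n n^{\gb-1}\pr(\xi>\eps n)\asymp\E\xi^{\gb}<\infty$ finishes the proof.

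\textbf{Main obstacle.} Because $\xi$ has only $\gb$ finite moments, the coverage-excess event $\{W_n\ge cn\}$ is \emph{not} in the Gaussian concentration regime: a naive Bernstein/Bennett bound with a single truncation at scale $n$ degrades to a constant, since one truncated summand already furnishes a constant fraction of the deviation. The genuinely heavy-tailed estimate — that an order-$n$ excess is carried by one big jump and costs $n\,\pr(\xi>\eps n)$ — is the crux, and it is exactly what yields the sharp exponent $\gb-1$. A secondary but real subtlety, smoothed over above by $R_i$, is that a regeneration time needs \emph{two} consecutive uncovered lattice points (the conditions $\xi_t=1$ and $\xi_{t+s}\le s$); isolating this in the window-measurable event $R_i$ is what simultaneously preserves independence across $i$ and the uniform lower bound $\pr(R_i)\ge q$.
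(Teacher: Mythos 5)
Your argument is correct, but it takes a genuinely different route from the paper's. The paper works backwards from time $\ell-1$: it builds a Markov chain of overshoot lengths $V_{n+1}=1+\max_{0\le i<V_n}(\xi_{t_n+i}-i\vee1)$, identifies $\gt_1+1$ in distribution with the total length $S=\sum_{n\le N}V_n$ swept before this chain hits $1$, dominates $N$ by a Geometric$(q)$ variable and each block length by an i.i.d.\ copy of $V$ satisfying $\pr(V>k)\le 2\E[\xi\ind(\xi>k)]$, and concludes $\E(\gt_1+1)^{\gb-1}\le\E\widehat V_1^{\gb-1}\,\E\widehat N^{\gb-1}<\infty$ because $\E V^{\gb-1}\lesssim\E\xi^{\gb}$. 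You instead prove the tail bound $\pr(\gt_1>n)\lesssim e^{-cn}+n\,\pr(\xi>\eps n)+n^{-A}$ directly, splitting each candidate regeneration point into a window-measurable local event (independence plus Hoeffding) and a long-range coverage count (truncation plus Bennett, i.e.\ Fuk--Nagaev). Both routes lose exactly one moment for the same underlying reason: in the paper it is the size-biased tail $\E[\xi\ind(\xi>k)]$, in yours the single-big-jump cost $n\,\pr(\xi>\eps n)$, and summing either against $n^{\gb-2}$ reproduces $\E\xi^{\gb}$. The paper's route is shorter and entirely elementary; yours is heavier but yields an explicit, sharp tail estimate for $\gt_1$ rather than bare moment finiteness. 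Two small points to tighten: first, your ``super-polynomially small'' truncated-sum term is really only polynomially small, of order $(\eps n)^{-cq(\gb-1)/\eps}$ from Bennett with truncation level $\eps n$ and variance proxy $n(\eps n)^{(2-\gb)_+}$ --- this still suffices once $\eps$ is chosen small relative to $q$, but the exponent should be exhibited; second, define $q:=\pr(\xi=1)\prod_{s\ge1}\pr(\xi\le s)$ from the outset, so that the lower bound $\pr(R_i)\ge q$ holds uniformly in $K_0$ and there is no circularity when you subsequently enlarge $K_0$ to force $2\E[\xi\ind(\xi>K_0)]\le q/8$.
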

Proofs of Propositions~\ref{prop:gamma-tau} and~\ref{prop:tau-xi} are given in Sections~\ref{sec:propgt} and~\ref{sec:proptx}, respectively.

\subsection{Regeneration Intervals: $\pr(\xi = 1) = 0$}
In this case we consider the more general setting where $\pr(\xi = 1) \ge 0$; for which it suffices to assume that $\pr(\xi = 1) = 0.$
This case can be interpreted as allowing the minimum delay to be greater than $1$.
Similar results hold for regeneration intervals as for regeneration times, which we state next; for brevity we omit the proofs.

Throughout, let $r := \min_{n \in \bN}\{n: \pr(\xi = n) > 0\}$.

Define
\begin{align*}
	\tilde{\cE_t} := \bigcap_{s \in [0,r)}\{\xi_{t+s} = r\}\bigcap_{s\ge r}\{\xi_{t+s} \le s\} = \bigcap_{s\ge 0}\{\xi_{t+s} \le s\vee r\}
\end{align*}
to be the event that the interval $[t, t + r)$ is a regeneration interval.
Since the $(\xi_i)_i$ are i.i.d. we have
\begin{align*}
	\pr(\tilde{\cE}_t) := \prod_{s \ge 0}\pr(\xi \le s\vee r).
\end{align*}
Similar to above, we define
\begin{align*}
	\tilde{q} := \prod_{s \ge 0}\pr(\xi \le s\vee r) > 0,
\end{align*}
since $\E \xi < \infty$. We define
\begin{align*}
	\tilde{N}_n := \sum_{t = 0}^{n}\ind_{\tilde{\cE}_t}
\end{align*}
as the number of regeneration windows in the first $n + 1$ segments.

\begin{lemma}
	$\E \tilde{N}_n=n\tilde{q}$ for all $n\ge 1$ and ${n^{-2}} \var(\tilde{N}_n)\to 0$ as $n\to\infty$.
	\label{lem:weak-law-r}
\end{lemma}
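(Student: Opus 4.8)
The plan is to mirror exactly the proof of Lemma~\ref{lem:weak-law}, since the regeneration-interval setting is designed to be the structural analogue of the regeneration-time setting with $r=1$ replaced by general $r$. The two assertions are a first-moment computation and a variance estimate, so I would handle them separately.

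For the mean, I would note that $\E\tilde N_n = \sum_{t=0}^{n}\pr(\tilde\cE_t)$ by linearity of expectation, and since the $(\xi_i)_i$ are i.i.d.\ the event $\tilde\cE_t$ has the same probability $\tilde q$ for every $t$. Hence $\E\tilde N_n = (n+1)\tilde q$; but one should double-check the indexing convention, because the sum runs over $t\in\{0,1,\dots,n\}$, which is $n+1$ terms, whereas the statement asserts $\E\tilde N_n = n\tilde q$. This suggests the intended convention counts regeneration intervals among $n$ starting positions (e.g.\ $t\in\{1,\dots,n\}$, matching the $N_n$ count in Lemma~\ref{lem:weak-law}), and I would adopt whatever indexing makes the two lemmas consistent; the computation itself is immediate once the range is fixed. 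The positivity $\tilde q>0$ is already supplied in the excerpt from $\E\xi<\infty$ (the tail product converges because $\sum_{s\ge r}\pr(\xi>s)<\infty$).

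For the variance, the key point is that the events $\tilde\cE_t$ and $\tilde\cE_{t'}$ become independent once $|t-t'|$ is large, but with a long-range dependence controlled by the tail of $\xi$. I would write $\var(\tilde N_n) = \sum_{s,t}\cov(\ind_{\tilde\cE_s},\ind_{\tilde\cE_t})$ and bound the covariances. The event $\tilde\cE_t = \bigcap_{s\ge 0}\{\xi_{t+s}\le s\vee r\}$ depends on the infinite future of the delay sequence, so $\tilde\cE_s$ and $\tilde\cE_t$ for $s<t$ share the coordinates $\xi_{t},\xi_{t+1},\dots$; the dependence is weak when $t-s$ is large because the constraints imposed by $\tilde\cE_s$ on coordinate $\xi_{t+j}$ are of the form $\xi_{t+j}\le (t+j-s)\vee r$, which become vacuous-in-effect as $t-s\to\infty$. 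The clean way to exploit this is to truncate: for a cutoff $m$, approximate $\tilde\cE_t$ by the finite-range event $\tilde\cE_t^{(m)} := \bigcap_{0\le s\le m}\{\xi_{t+s}\le s\vee r\}$, whose indicators are $(m+1)$-dependent and thus contribute $O(mn)$ to the variance, and control the truncation error $\pr(\tilde\cE_t^{(m)})-\pr(\tilde\cE_t) = \tilde q\bigl(\prod_{s>m}\pr(\xi\le s)\bigr)^{-1} - \tilde q$ uniformly in $t$ using $\E\xi<\infty$. Choosing $m=m(n)\to\infty$ slowly (e.g.\ $m=o(n)$) then gives $n^{-2}\var(\tilde N_n)\to 0$.

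The routine parts are the first-moment identity and the $(m+1)$-dependence bound $O(mn)=o(n^2)$; the one place requiring genuine care is quantifying the truncation error so that it too is $o(n^2)$ after summing over the $O(n^2)$ pairs. Concretely, the main obstacle is showing $\sum_{s<t}\lvert\cov(\ind_{\tilde\cE_s},\ind_{\tilde\cE_t})\rvert = o(n^2)$, for which I would split each covariance into a short-range part ($t-s\le m$), handled by crude bounds, and a long-range part ($t-s>m$), where I replace $\tilde\cE_s$ by its $m$-truncation to decouple it from $\xi_t,\xi_{t+1},\dots$ and absorb the replacement error into the tail $\prod_{s>m}\pr(\xi\le s)\to 1$. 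Since $\E\xi^{1+\delta}<\infty$ is assumed throughout, the tail decays fast enough to push the error below $o(n^2)$; in fact, as the authors remark for Lemma~\ref{lem:weak-law}, under $\E\xi^2<\infty$ one even gets $n^{-1}\var(\tilde N_n)$ converging, but the weaker $n^{-2}\var(\tilde N_n)\to 0$ needed here follows from $\E\xi<\infty$ alone by the truncation argument above.
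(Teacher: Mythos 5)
Your proof is correct, and it rests on the same underlying fact as the paper's: the constraints that $\tilde\cE_s$ imposes on coordinates far to its right become vacuous, quantified by the truncated products $\tilde q_m=\prod_{s=0}^{m-1}\pr(\xi\le s\vee r)$ decreasing to $\tilde q$ at a rate controlled by $\sum_{s>m}\pr(\xi>s)$, which is summable since $\E\xi<\infty$. The bookkeeping differs, though. You split the covariance sum at a cutoff $m=m(n)$, bound the $O(mn)$ short-range pairs crudely, and decouple the long-range pairs by replacing $\tilde\cE_s$ with its finite-range truncation $\tilde\cE_s^{(m)}$, absorbing the replacement error $\pr(\tilde\cE_s^{(m)})-\pr(\tilde\cE_s)$ into the tail product. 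The paper instead observes the exact identity $\tilde\cE_s\cap\tilde\cE_t=\tilde\cE_t\cap\bigcap_{i=0}^{t-s-1}\{\xi_{s+i}\le i\vee r\}$ for $s<t$, whence $\pr(\tilde\cE_s\cap\tilde\cE_t)=\tilde q\,\tilde q_{t-s}$ exactly and $\var(\tilde N_n)=n\tilde q(1-\tilde q)+2\tilde q\sum_{t=1}^n(n-t)(\tilde q_t-\tilde q)$; the conclusion is then a one-line Ces\`aro average of $\tilde q_t-\tilde q\to0$, with no cutoff to choose. Your version is a touch more robust (it is the standard mixing-type argument and would survive if the exact identity failed), while the paper's exact computation is shorter and, as noted in the remark after Lemma~\ref{lem:weak-law}, immediately yields the sharper $n^{-1}\var(\tilde N_n)\to\text{const}$ under $\E\xi^2<\infty$. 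Your observation that the statement's $n\tilde q$ should be $(n+1)\tilde q$ given the summation range $t\in\{0,\dots,n\}$ is accurate; the same harmless off-by-one appears in Lemma~\ref{lem:weak-law}, and it affects nothing downstream.
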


In what follows we refer to regeneration windows by the first time in those windows; this is without loss of generality by the construction of the segmented time.
\begin{corollary}
	There exist infinitely many regeneration windows, almost surely.
\end{corollary}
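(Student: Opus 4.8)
The plan is to follow verbatim the argument already used for Corollary~\ref{cor:regeneration}, with the roles of $N_n$ and $q$ now played by $\tilde{N}_n$ and $\tilde{q}$. First I would observe that $(\tilde{N}_n)_{n\ge 1}$, being a partial sum of the indicators $\1_{\tilde{\cE}_t}$, is non-decreasing in $n$ and hence converges almost surely to an extended-integer-valued random variable $\tilde{N}_\infty \in \{0,1,2,\ldots\}\cup\{\infty\}$. The goal is to show that $\tilde{N}_\infty = \infty$ almost surely, which is precisely the assertion that infinitely many regeneration windows occur.

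Next I would invoke Lemma~\ref{lem:weak-law-r}, which supplies $\E\tilde{N}_n = n\tilde{q}$ together with $n^{-2}\var(\tilde{N}_n)\to 0$, and apply Chebyshev's inequality exactly as before: for any $\eps > 0$,
\[
	\pr\!\left(\left|\tilde{N}_n/n - \tilde{q}\right| > \eps\right) \le \frac{\var(\tilde{N}_n)}{\eps^2 n^2} \longrightarrow 0 \quad\text{as } n\to\infty,
\]
so that $\tilde{N}_n/n \to \tilde{q}$ in probability. Since $\tilde{q} = \prod_{s\ge 0}\pr(\xi\le s\vee r) > 0$, the probabilistic limit is strictly positive, which forces $\tilde{N}_n\to\infty$ and hence $\tilde{N}_\infty = \infty$ almost surely, completing the argument.

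I do not expect a genuine obstacle here, since all of the real content has been front-loaded into Lemma~\ref{lem:weak-law-r}, whose proof (structurally identical to that of Lemma~\ref{lem:weak-law}) is where the i.i.d.\ structure of $\mvxi$ and the finite-mean hypothesis actually do the work. The only point meriting a moment's care is the positivity $\tilde{q}>0$: this is exactly where the standing assumption $\E\xi<\infty$ enters, because $\sum_{s\ge 0}\pr(\xi > s) = \E\xi < \infty$ guarantees that the tail factors $\pr(\xi\le s\vee r)$ tend to $1$ fast enough for the infinite product to be nonzero. Indeed, the condition $r=\min\{n:\pr(\xi=n)>0\}$ ensures $\pr(\xi\le s\vee r)>0$ for every $s$, so no individual factor vanishes, and convergence of $\sum_{s}\pr(\xi>s)$ yields a strictly positive product.
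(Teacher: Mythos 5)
Your proposal is correct and is exactly the argument the paper intends: the authors omit the proof precisely because it repeats the Chebyshev/monotone-convergence argument of Corollary~\ref{cor:regeneration} with $\tilde{N}_n$ and $\tilde{q}$ in place of $N_n$ and $q$, relying on Lemma~\ref{lem:weak-law-r} and the positivity $\tilde{q}>0$ guaranteed by $\E\xi<\infty$. Nothing is missing.
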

We denote by $(\tilde{\gt}_k)_{k\ge 1}$ the sequence of times such that $[\tilde{\gt}_k, \tilde{\gt}_k + r)$ are regeneration intervals.
\begin{corollary}
	There exists infinitely many times $(T_i)_i$ such that $[T_i, T_i + r)$ and $[T_i + r, T_i + 2r)$ are both regeneration intervals, almost surely.
\end{corollary}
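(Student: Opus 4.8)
The plan is to mirror the second–moment argument used for Corollary~\ref{cor:regeneration}, applied now to the event that two \emph{adjacent} regeneration intervals occur; this is exactly the regeneration-interval analogue of Corollary~\ref{cor:regeneration-fixed-gaps}, which is the special case $r=1$. For $t \ge 0$ define
\[
	\cD_t := \tilde{\cE}_t \cap \tilde{\cE}_{t+r},
\]
the event that both $[t,t+r)$ and $[t+r,t+2r)$ are regeneration intervals, and set $M_n := \sum_{t=0}^{n}\ind_{\cD_t}$. The desired times $(T_i)_i$ are then the increasing enumeration of $\{t \ge 0 : \ind_{\cD_t} = 1\}$, so it suffices to show $M_n \to \infty$ almost surely.

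First I would show $\pr(\cD_t)$ is a positive constant. Writing $\tilde{\cE}_t = \bigcap_{s\ge 0}\{\xi_{t+s}\le s\vee r\}$, the conditions with $s \in [0,r)$ force $\xi_{t+s}=r$ (since $\xi \ge r$ a.s.\ by the definition of $r$), while those with $s\ge r$ read $\xi_{t+s}\le s$. A direct check shows $\tilde{\cE}_{t+r}$ already implies $\xi_{t+s}\le s$ for every $s\ge r$: its constraint on index $t+s$ is $\xi_{t+s}\le (s-r)\vee r$, and $(s-r)\vee r \le s$ whenever $s\ge r$. Hence
\[
	\cD_t = \Big(\bigcap_{s\in[0,r)}\{\xi_{t+s}=r\}\Big)\cap \tilde{\cE}_{t+r},
\]
where the two events on the right depend on the disjoint index sets $\{t,\dots,t+r-1\}$ and $\{t+r,t+r+1,\dots\}$. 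By independence of the $(\xi_i)_i$ this gives
\[
	\pr(\cD_t) = \pr(\xi=r)^r\,\tilde q =: \tilde q' > 0,
\]
which is positive and independent of $t$. Consequently $\E M_n = (n+1)\tilde q'$ grows linearly.

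The remaining and main step is the variance bound $n^{-2}\var(M_n)\to 0$, which does not follow from independence: for $s<t$ the events $\cD_s$ and $\cD_t$ are measurable with respect to overlapping (indeed nested) collections of future delays, so one must control the decay of $\cov(\ind_{\cD_s},\ind_{\cD_t})$ in $t-s$. Since $\cD_t$ is a future-measurable event of exactly the same tail-decay type as $\tilde{\cE}_t$ — it pins down a finite initial block of delays and then imposes the linearly growing bounds $\xi_{t+s}\le s$ — the covariance estimate behind Lemma~\ref{lem:weak-law-r} applies verbatim, using only $\E\xi<\infty$; this is the technical heart of the argument and the step I expect to be the only real obstacle. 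Granting it, Chebyshev's inequality yields, for any $\eps>0$,
\[
	\pr\!\left(\left|M_n/n - \tilde q'\right| > \eps\right) \le \frac{\var(M_n)}{\eps^2 n^2}\to 0,
\]
so $M_n/n \to \tilde q'$ in probability. As $M_n$ is non-decreasing in $n$ and $\tilde q'>0$, its almost sure limit $M_\infty$ must be $\infty$, giving infinitely many $t$ with $\cD_t$ and hence the claimed sequence $(T_i)_i$.
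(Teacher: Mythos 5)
Your proof is correct and follows essentially the route the paper intends: it omits the proof of this corollary precisely because it is the second-moment/Chebyshev argument of Lemma~\ref{lem:weak-law-r} and Corollary~\ref{cor:regeneration} applied to the adjacent-interval event, which is exactly what you do. Your factorization $\cD_t = \bigl(\bigcap_{s\in[0,r)}\{\xi_{t+s}=r\}\bigr)\cap\tilde{\cE}_{t+r}$ and the observation that the covariance estimate carries over verbatim are both sound, so nothing further is needed.
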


\begin{lemma}
	\label{lem:reg-prop-r}
	The random variables $(\tilde{\gc}_i)_{i\ge 1}$ are i.i.d. with $\E\tilde{\gc}_1 = 1/{\tilde{q}}$.
	Moreover, if $\E \xi^k < \infty$ for some $k > 1$, then $\E \tilde{\gc}_1^k < \infty$.
\end{lemma}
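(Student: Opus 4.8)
The plan is to follow the template of the proof of Lemma~\ref{lem:reg-prop}, replacing the single regeneration time by the length-$r$ regeneration interval; setting $r=1$ recovers that earlier argument verbatim, so the only genuine work is to check that the interval structure preserves the product/renewal decomposition. First I would record the conditional law of the delays given a regeneration interval. The event $\tilde{\cE}_t$ factorizes across coordinates: it pins $\xi_{t+s}=r$ for $0\le s<r$ and imposes $\xi_{t+s}\le s$ for $s\ge r$. Since the $(\xi_i)_i$ are i.i.d.\ and $r=\min\supp(\xi)$ forces $\xi\ge r$ almost surely, conditioning on $\tilde{\cE}_t$ leaves $(\xi_{t+s})_{s\ge r}$ independent with $\xi_{t+s}\equald(\xi\mid \xi\le s)$, and independent of everything strictly before time $t$. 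In particular $\tilde{\cE}_t$ is measurable with respect to the forward sequence $(\xi_{t+s})_{s\ge 0}$, which is precisely the feature that lets the regeneration intervals behave like renewals even though they are not stopping times.

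Second, I would run the same induction as in Lemma~\ref{lem:reg-prop}. Fixing $0\le t_1<\cdots<t_k$ together with forward-measurable events on the intervening blocks, the coordinatewise product structure of $\tilde{\cE}_t$ and the conditional-independence statement above let me peel off one gap at a time: conditional on $[t_i,t_i+r)$ being a regeneration interval, the joint law of the next interval start and of the delays in between is an independent shifted copy of the law of $\tilde{\gt}_1$ and its associated block. This yields both that the gaps $(\tilde{\gc}_i)_{i\ge1}$ are i.i.d.\ and that $G_{\tilde{\gt}_k}$ enjoys the Markov property at the interval starts. The identity $\E\tilde{\gc}_1=1/\tilde{q}$ then follows from the renewal theorem applied to $\tilde{N}_n$, exactly as $\E\gc_1=1/q$ followed from $N_n$; indeed Lemma~\ref{lem:weak-law-r} already gives $\tilde{N}_n/n\to\tilde{q}$ in probability.

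Finally, for the moment bound I would prove interval analogues of Propositions~\ref{prop:gamma-tau} and~\ref{prop:tau-xi}: that $\E\tilde{\gc}_1^{\gb}<\infty$ iff $\E\tilde{\gt}_1^{\gb-1}<\infty$, and that $\E\tilde{\gt}_1^{\gb-1}<\infty$ whenever $\E\xi^{\gb}<\infty$, for any fixed $\gb\ge1$. The first is a renewal-theoretic equivalence between the inter-renewal moment and the first-renewal moment; the deterministic length-$r$ prefix shifts the renewal epochs by a bounded amount and so does not affect this equivalence. The second bounds the first regeneration-interval start by controlling how long one waits for the forward constraints $\{\xi_{t+s}\le s\vee r\}_{s\ge0}$ to hold simultaneously, and the replacement of $s$ by $s\vee r$ alters only finitely many coordinates, hence does not change the tail estimate obtained for $\gt_1$. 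Combining the two gives $\E\tilde{\gc}_1^{k}<\infty$ whenever $\E\xi^{k}<\infty$ for $k>1$.

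The main obstacle I anticipate is the one already flagged in the text: regeneration intervals are not stopping times, so I cannot invoke a strong Markov property and must instead justify the renewal decomposition through the explicit forward-measurability and coordinatewise factorization of $\tilde{\cE}_t$. Once that bookkeeping is in place, the length-$r$ block introduces no essential difficulty beyond notation, which is why the statement is asserted to hold by the same arguments as in the $\pr(\xi=1)>0$ case.
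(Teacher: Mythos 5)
Your proposal is correct and follows essentially the same route as the paper, which simply asserts that Lemma~\ref{lem:reg-prop-r} follows from the argument of Lemma~\ref{lem:reg-prop} (via a pre-chunking of time into length-$r$ blocks) together with the interval analogues of Propositions~\ref{prop:gamma-tau} and~\ref{prop:tau-xi}. You have merely spelled out the forward-measurability, the gap-peeling induction, and the moment transfer in more detail than the paper does, and all of those details check out.
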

Lemma~\ref{lem:reg-prop-r} follows from an identical argument to Lemma~\ref{lem:reg-prop}, which can be seen as follows.
Indeed, suppose that time is ``pre-chunked'' into intervals $[rk + a, r(k+1) + a)$, where $k \in \dZ$ and $a$ is fixed.
Indeed, on these chunks, the previous argument holds to identify regeneration windows; the result follows since $\mvxi$ is i.i.d. and $a$ is arbitrary.

\subsection{Proof of Lemma~\ref{lem:limit-existence}}
The proof of Lemma~\ref{lem:limit-existence} is now immediate as upon a regeneration time, the degree of any vertex which is not a leaf is henceforth fixed over time.

\section{Proofs for Regenerative Behavior}
\subsection{Proof of Lemmas~\ref{lem:weak-law} and~\ref{lem:weak-law-r}}\label{sec:lemwl}
Here we consider the general case $r\ge 1$, \ie\ $\pr(\xi=r)>0$ and $\pr(\xi<r)=0$.
Recall that, $\tilde{q}=\prod_{s=0}^{\infty} \pr(\xi \le s\vee r)$.
We have
\begin{align*}
	\var(\tilde{N}_{n}) = \sum_{t=1}^{n}\pr(\tilde{\cE}_t) + 2 \sum_{1\le s<t\le n}\pr(\tilde{\cE}_{s}\tilde{\cE}_{t}) - n^2\tilde{q}^2.
\end{align*}
We denote by $\tilde{q}_t$ the truncated product
			$\tilde{q}_t := \prod_{s=0}^{t-1} \pr(\xi \le s\vee r)$ for $t\ge 0$ which decreases to $\tilde{q}$ as $t \to \infty$.
In particular, for $t\ge r$, we have
\begin{align*}
	0\le \tilde{q}_{t}-\tilde{q} \le \tilde{q}_{t}\left(1- \prod_{s=t+1}^\infty (1-\pr(\xi > s)) \right) \le \sum_{s=t+1}^\infty \pr(\xi > s).
\end{align*}
Clearly, the events $(\tilde{\cE}_t)_{t>0}$ are identically distributed.
Moreover for $t > s$, we have
\begin{equation}
	\label{eq:EtEs}
	\tilde{\cE}_t\cap\tilde{\cE}_s= \tilde{\cE}_t \bigcap_{i=0}^{t - s-1} \{\xi_{s+i} \le i\},
\end{equation}
so that $\pr(\tilde{\cE}_t\cap\tilde{\cE}_s) =\tilde{q}\tilde{q}_{t-s}$.
In particular, we have
\begin{align}
	\label{eq:varNn}
	&\var(\tilde{N}_n)\notag\\ & = n\tilde{q} + 2\tilde{q} \sum_{t=1}^{n}(n-t)\tilde{q}_t- n^2\tilde{q}^2\notag\\
	&= n\tilde{q}(1-\tilde{q}) + 2\tilde{q} \sum_{t=1}^{n}(n-t)(\tilde{q}_t-\tilde{q}).
\end{align}
Finally we get
\begin{align*}
	\frac1{n^2} \var(\tilde{N}_n)\le \frac1n\tilde{q}(1-\tilde{q}) + \frac{2\tilde{q}}{n} \sum_{t=1}^n (\tilde{q}_t-\tilde{q}) \to 0 
\end{align*}
as $n\to\infty$. This completes the proof.

\subsection{Proof of Proposition~\ref{prop:gamma-tau}}\label{sec:propgt}
Let $h : \dZ \to \dR$ be a function such that $h(0) = 0$ and denote by
\begin{align*}
	H(x) := \sum_{i = 0}^{x}h(i),\quad x\in \dN.
\end{align*}
We can re-express $h(x) = DH(x) = H(x+1) - H(x)$, where $D$ is the discrete derivative operator. It follows from a standard result of Palm theory~\cite[Chapter 6]{RMbook} that
\begin{align*}
	\E(h(\gt_1)) = \pr(\cE_0) \E H(\gc_1)= q\E H(\gc_1),
\end{align*}
but we include a heuristic proof below for completeness. Our result then follows by picking $H(x) = x^\gb$ for $\gb > 1$.

For $t\ge 0$, define $\gl_t = \min\{\gt_i : \gt_i \ge t, i\ge 1\}$ as the next generation time after time $t$. Clearly, $\gl_{t}-t\equald \gl_{0}=\gt_1$

Fix some large integer $n$. Suppose that there are $K_{n}:=\max\{k\ge 0\mid \gt_{k}\le n\}$ many regeneration times in the interval $[0, n]$.
By stationarity, we see that:
\[
	n\E(h(\gl_0)) = \sum_{t=0}^{n-1}\E(h(\gl_t-t)).
\]
Denote by $Y_j := (\gt_{j-1}, \gt_{j}]$ the $j$-th interval in between two consecutive regeneration times after time $0$ for $j\ge 1$. For $t \in Y_j$, we have $\gl_{t}=\gt_{j}$. In particular, we have $\sum_{t \in Y_j}h(\gl_t-t) = H(\gc_j) - H(0)=H(\gc_{j})$. Thus
\[
	n\E(h(\gl_0)) = \E K_{n}\cdot \E H(\gc_1) +O(1).
\]
Finally, the result follows by scaling by $\frac1{n}$ and passing to the limit using the renewal theorem as $\E K_{n}/n\to q$.

\subsection{Proof of Proposition~\ref{prop:tau-xi}}\label{sec:proptx}
Let $F$ denote the distribution function of $\xi$. Here, we consider the case when we have $F(1)=\pr(\xi=1)>0$.
Define the positive random variable
\begin{align*}
	V := 1+\max_{i\ge 0}(\xi_i - i\vee 1)\ge 1
\end{align*}
where $\xi_{i},i\ge 0$ are i.i.d.~$\sim F$.
For any $k\ge 1$ we have
$
	\pr(V \le k) = F(k)\prod_{i=k}^{\infty} F(i).
$
It is clear that
\[
	\pr(V = 1) = q = F(1)\prod_{i=1}^{\infty} F(i).
\]
We also have that for $k\ge 0$
\begin{align}\label{eq:vhattail}
	\begin{split}
		&\pr(V > k) = 1 - F(k)\prod_{i=k}^{\infty}F(i)\\
		& \le 1 - F(k)+F(k)\sum_{i=k}^{\infty} (1 - F(i))\\
        &\le 2\E(\xi - k)_+
		\le 2\E(\xi \ind(\xi > k)).
	\end{split}
\end{align}

To analyze the distributional properties of $\gt_1$, first we note that, $\pr(\gt_1=0)=q$ and for any $\ell\ge 1$
\begin{align}\label{eq:taupr}
	\pr(\gt_1\ge \ell) = \pr( \text{there is no regeneration time in }[0,l)).
\end{align}
Fix $t_{0}:=\ell-1$ and consider the event that there is no regeneration time in $[0,l]$. Define $V_{0}:=\infty$. Here we have no restriction about the time delay graph in the interval $[t_{0}+1,t_{0}+V_{0})$. Define
\[
	V_1:= 1+\max_{0\le i < V_0}(\xi_{t_0+i} - i\vee 1) \equald V.
\]
If $t_{0}$ is not a regeneration time, we have $V_1>1$. Note that $V_1$ involves all the delay r.v.s in the interval $I_{0}:=[t_{0},t_{0}+V_{0})$. Moreover, in the time delay graph, there can be no regeneration time in the interval $[t_1+1,t_{0})$ where $t_1:=t_{0}-V_1$. We define
\[
	V_2:= 1+\max_{0\le i < V_1}(\xi_{t_1+i} - i\vee 1).
\]
If $t_1$ is not a regeneration time, we have $V_2>1$. Again, $V_2$ involves all the delay r.v.s in the interval $I_1:=[t_1,t_0)$.

Continuing this process, with $t_{n}:=t_{n-1}-V_{n}, V_{n+1}:= 1+\max_{0\le i < V_{n}}(\xi_{t_{n}+i} - i\vee 1)$ involving edges from the interval $I_{n}:=[t_{n},t_{n-1}), n\ge 1$ and so on, we see that $(V_{i})_{i\ge 0}$ is a Markov chain with $V_{0}=\infty$ and given $V_n=k\ge 1$, we have
\[
	V_{n+1}\equald 1+\max_{0\le i< k}(\xi_i - i\vee 1).
\]
The first regeneration time can be obtained when $V_{n}$ hits $1$. This process is described pictorially in Figure~\ref{fig:domination}.

\begin{figure}
	\centering
	\includegraphics[width=.8\textwidth,page=12]{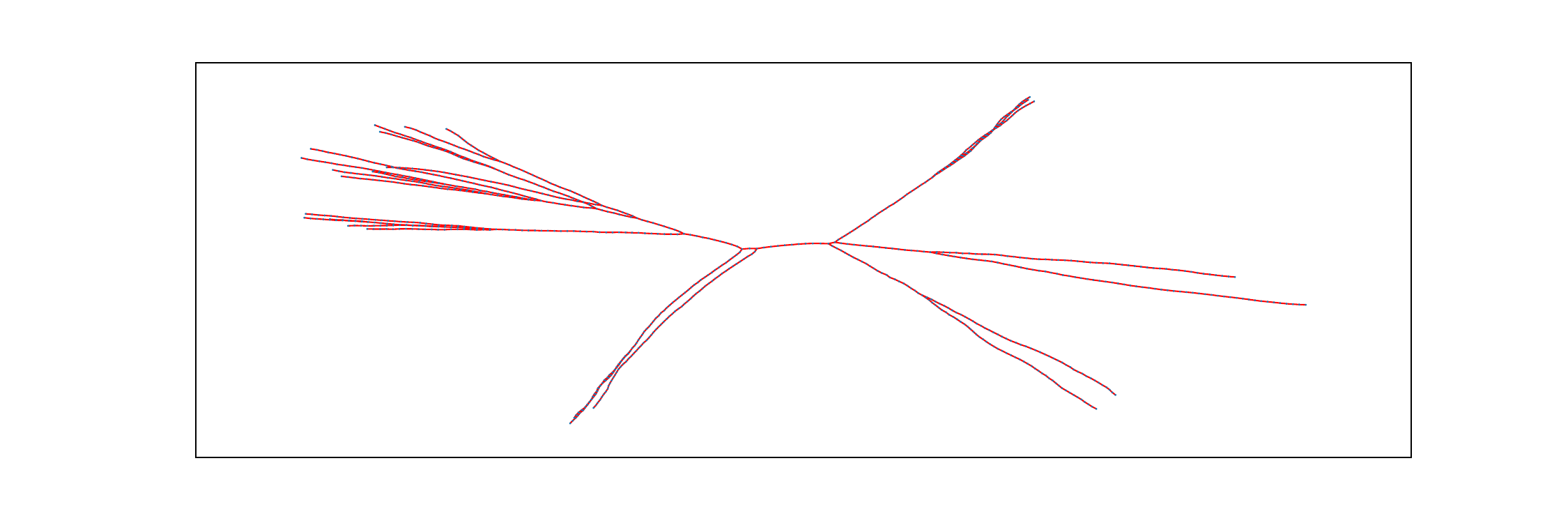}
	\caption{Stochastic domination in the proof of Proposition~\ref{prop:tau-xi} with $\ell=15,V_{i}=t_{i-1}-t_{i}, i\ge 1$.}
	\label{fig:domination}
\end{figure}

Define
\[
	N:=\inf\{n\ge 0\mid V_{n}=1\}\ge 1,
\]
the hitting time to $1$ for the Markov chain. Define the random variable
\[
	S:=\sum_{n=1}^{N} V_{n} =\sum_{n=1}^{\infty} V_{n}\ind_{n\le N} \ge 1.
\]
Thus, we have the following relation
\begin{align}\label{eq:regl}
	\{ \text{There is no regeneration point in }[0,l) \}=\{S> \ell\}.
\end{align}
In particular, using equations~\eqref{eq:taupr} and~\eqref{eq:regl}, we have $\gt_1\equald S-1$.
Thus, to control the moments of $\gt_1$ we need to get an upper bound for the moments of $S$. Note that,
\[
	\pr(V_2=1\mid V_1=k) = \prod_{i=0}^{k-1}\pr(\xi\le i\vee1) \ge q \text{ for all }k\in[1,\infty].
\]
Thus $N$ is stochastically dominated by a Geometric$(q)$ random variable.
It is also easy to see that $S$ is stochastically dominated by $\sum_{i=0}^{\widehat{N}} \widehat{V}_{i}$, where $(\widehat{V}_i)_{i\ge 1}$ are i.i.d.~random variables such that $\widehat{V}_1 \sim V \mid V > 1$ and are independent of $\widehat{N}\sim\text{Geometric}(q)$.

In particular, for $\gb>1$, we get that
\begin{align*}
	\E(\gt_1+1)^{\gb-1}=\E S^{\gb-1} \le \E\widehat{V}_1^{\gb-1} \cdot \E \widehat{N}^{\gb-1}<\infty
\end{align*}
if $\E\widehat{V}_1^{\gb-1}<\infty$ or $\E V^{\gb-1}<\infty$.

Assume that $\E\xi^{\gb}<\infty$ for some $\gb>1$.
We have using equation~\eqref{eq:vhattail} that
\begin{align*}
	&\E V^{\gb-1}
	  = \sum_{k\ge 0} ((k+1)^{\gb-1}-k^{\gb-1})\pr(V > k) \\
	 & \le 2\gb \sum_{k \ge 0} (k+1)^{\gb-2}\E(\xi \ind(\xi > k))\\& \le 2\gb\E(\xi+1)^{\gb}<\infty
\end{align*}
where the first inequality follows from equation~\eqref{eq:vhattail}. This completes the proof.

\section{Proofs of Main Results}
In this section we present the proofs of our main results.
The proof of Theorem~\ref{thm:f_2-recurrent} depends on the value of $r$; and we separate that proof into the two cases where $r = 1$ and $r > 1$.

\subsection{Proof of Theorem~\ref{thm:nakamoto-conv}}
Denote $\pi_0 = 0$, and for $k \ge 1$, we inductively define $\pi_k = \min_{n \in \dN}\{n>\pi_{k-1}: X_n \ge k\}$.
From the i.i.d.~assumption on the $(\xi_i)_i$, it is easy to see that the random variables $\chi_k:=\pi_{k+1} - \pi_k,k\ge 0$ are i.i.d..
Note that the sequence $(\chi_k)_k$ denotes the lengths of intervals where $X_t$ stays constant.
It follows from the strong law of large numbers that
\begin{align*}
	\frac{X_t}{t} \xrightarrow{a.s.} \frac1{\E(\chi_1)}.
\end{align*}

Finally, we compute the distribution of $\chi_1$ as follows.
Note that the first time of increment for $X_t$ after time $0$ is the first time when $\xi_{i} < i$.
Using the independence of $(\xi_i)_i$, we get that $\pr(\chi_1 \ge k) = \prod_{i=1}^{k}\pr(\xi_i \ge i)$; the result follows.

Observe that this proof works even when $r > 1$ because for $i \le r$, we have $\pr(\xi_i \ge i) = 1$.
The second convergence is an immediate consequence of the renewal central limit theorem.
The third convergence follows from the functional strong law of large numbers.
Finally, the last convergence follows from Donsker's theorem for renewal processes.

\subsection{Proof of Theorem~\ref{thm:nakamoto-ends}}
Here we present the proof for the general case $r\ge 1$.
Observe that if there are two regeneration intervals beginning at the instants $t_0$ and $t_0 + r$, then there are also regeneration windows beginning at all of the instants in $(t_0, t_0 + r)$. We call such a regeneration interval $[t_{0}, t_{0}+2r)$ a long regeneration interval beginning at $t_{0}$.

Fix a long regeneration interval beginning at $t_0$. Note that, almost surely, there is an infinite sequence of such regeneration intervals with finite expected inter-duration.
We consider the instant $\rho$ of the last increment of the height process $X_t$ before time $t_{0}$.
If $\rho \le t_{0} - r$, then there is an increment of $X_t$ at time $t_{*}=t_{0}$. In this case, with probability $r^{-r}$, each leaf vertex in the regeneration interval beginning at $t_{0} + r$ connects to the leaf added at time $t_{*}$, as the height process will stay constant in the interval $[t_{0},t_{0}+r)$.
From the definition of a regeneration interval, the leaf added at time $t_{*}$ will be confirmed in the asynchronous limit.
If $\rho= t_{0} - i$ for some $i=1,2,\ldots,r-1$, then there is an increment of $X_t$ at time $t_{*}=t_{0}+r-i$. The same argument can be used to show that $\pr($a long regeneration window contains a confirmed vertex$)\ge r^{-r}>0$.

It follows that the asynchronous limit exists and has infinitely many confirmed vertices, almost surely.
Since the asynchronous limit is a tree, it immediately follows that it is one-ended.

This proof reduces to a simpler argument when $r = 1$.
Indeed, recall the previous argument that if two consecutive instants are regeneration times, then the vertex added at the first instant is confirmed in the limit as $t \to \infty$.

\subsection{Proof of Theorem~\ref{thm:f_1-growth}}

First we present the proof for the $r=1$ case. The proof for the general $r$ case is essentially the same.

It is easy to see that $L_{t}=L(G_{t})$, the number of leaves at time $t$, is a non-decreasing function of $t$ with $0\le L_{t+1}-L_{t}\le 1$ a.s.~for all $t$. Here we will show that the expected number of leaves grows as $\Theta(t^{1/2})$. Since, $(\hat{L}_{k})_{k\ge 1}$ is a Markov chain, it follows that $L_{t}\to\infty$ almost surely and thus $G^{\infty}(f_{1})$ has infinitely many ends by Lemma~\ref{lem:tree-infty}.

Without loss of generality we can assume that $\cE_{0}$ holds, \ie\ $0$ is a regeneration time. Otherwise, we can shift the time to the first regeneration time $\gt_1$, which is a tight random variable.
In particular, conditional on the event that $0$ is a regeneration time, the delays $(\xi_t)_{t\ge 1}$ are independent and satisfy $(\xi_t\mid \cE_{0}) \sim (\xi\mid \xi \le t)$ for all $t\ge 1$. We use $\hat{\xi}_{t}$ for $\xi_{t}$ conditioned on $\cE_{0}$.

For the function $f_1$, we have:
\begin{align*}
	V(G_t) = V(G_{t-1}) \cup \{t\}\text{ and }E(G_t) = E(G_{t-1}) \cup \{(t, \ell_t)\},
\end{align*}
where the vertices $(\ell_t)_{t\ge 1}$ are independent and satisfy $\ell_t \sim \text{Unif}(\cL(G_{t- \hat\xi_t}))$.
We denote by
\begin{align*}
	\cI_{t, s} := \cL(G_s) \cap \cL(G_t)^c \text{ for }t \ge s,
\end{align*}
the set of leaves in $G_s$ which are not leaves in $G_t$ anymore.
We denote by $I_{t, s} := |\cI_{t, s}|$ the number of such leaves in $G_s$.
Observe that
\begin{align*}
	L_t - L_{t-1} = \ind\{\ell_t \in \cI_{t-1, t - \hat\xi_t}\}.
\end{align*}
This follows since for the function $f_1$, at most a single new leaf can be added in any time step, and the number of leaves cannot decrease at any time step.

Let $\cF_t$ be the $\sigma$-algebra generated by the delays $\hat{\xi}_1, \hat{\xi}_2, \ldots, \hat{\xi}_t$ and the leaf choices $\ell_1,\ell_2,\ldots, \ell_{t}$. We can express the conditional probability as
\begin{align}
	\pr\left(\ell_t \in \cI_{t-1, t - \hat\xi_t} \ \bigr|\ \cF_{t-1}\right)
	= \sum_{i=1}^{t}\pr(\hat\xi_t = i)\cdot \frac{I_{t-1, t-i}}{L_{t-i}}.
	\label{eq:f_1-growth-cond-prob}
\end{align}
We begin with the upper bound.
As noted above, the function $f_1$ implies that
\begin{align*}
	I_{t-1, t - i}
	 & \le (i-1) \wedge L_{t-i},\text{ and that } \\
	L_{t-1} & \le L_{t-i} - I_{t-1, t- i} + (i - 1 - I_{t-1, t- i}).
\end{align*}
Re-arranging, we get
\begin{align*}
	L_{t-i} \ge L_{t-1} + 2I_{t-1, t- i} -(i - 1).
\end{align*}
We now use the identity $\frac{ab}{b + 2a - (i-1)} \le \frac{b(i-1)}{b + (i-1)} \le i-1$ for $0\le a \le i-1 < b$ to bound equation~\eqref{eq:f_1-growth-cond-prob}.
In particular, we have
\begin{align*}
	&\pr\left(\ell_t \in \cI_{t-1, t - \hat\xi_t} \ \bigr|\ \cF_{t-1}\right)\\
	 & = \sum_{i=1}^{t}\pr(\hat\xi_t = i)\cdot \frac{I_{t-1, t- i}}{L_{t-i}} \\
	 & \le \frac1{L_{t-1}}\sum_{i=1}^{L_{t-1}}(i-1)\pr(\hat\xi_t = i) + \pr\left(\hat\xi_t-1 \ge L_{t-1} \mid \cF_{t-1}\right) \\
	 & \le \frac2{L_{t-1}}\E(\hat\xi_t - 1)_{+}.
\end{align*}
Here, the first inequality follows by breaking the interval $[1, t]$ into sub-intervals $[1, L_{t-1}]$ and $[L_{t-1} + 1, t]$; and by bounding the term ${I_{t-1, t- i}}/{L_{t-i}}$ by $(i-1)/L_{t-1}$ in the first subinterval and by $1$ in the second.
Recall from the model that $\hat{\xi}_t$ is independent of $\cF_{t-1}$, so that $\E((\hat\xi_t - 1)_{+} \mid \cF_{t-1}) = \E(\hat\xi_t - 1)_{+}$.
By taking $c := \sup_t 2\E(\hat\xi_t - 1)_{+}$, it follows that
\begin{align*}
	\pr\left(\ell_t \in \cI_{t-1, t-\hat{\xi}_t} \mid \cF_t\right) \le \frac{c}{L_{t-1}} \text{ for all }t.
\end{align*}
In particular, we have
\[
	\E(L_t \mid \cF_{t-1}) \le L_{t-1} + \frac{c}{L_{t-1}}.
\]

We conclude by examining the difference of the second moments $L_t^2 - L_{t-1}^2$.
We have that $L_t^2 - L_{t-1}^2 = (2L_{t-1} + 1)\ind\{\ell_t \in \cI_{t-1, t-1- \hat{\xi}_t}\}$, which follows from expanding $(L_{t-1}+1)^2$. Thus, in follows that
\begin{align*}
	&\E(L_t^2 \mid \cF_{t-1}) - L_{t-1}^2\\
	 & \le (2L_{t-1} + 1) \frac{c}{L_{t-1}}
	\le 2c + \frac1{L_{t-1}}
	\le 2c + 1.
\end{align*}
By induction, we have that $\E L_t^2 \le (2c + 1)t + L_0^2$. Hence by Jensen's inequality we have $\E L_t \le \sqrt{\E(L_t^2)} \le \sqrt{(2c+1)t + L_0^2}$, which establishes the upper bound for $\E L_{t}$.
We now establish a lower bound. Observe that
\begin{align*}
	\E(L_t \mid \cF_{t-1}) \ge L_{t-1} + \frac{c_1}{L_{t-1}}
\end{align*}
for some constant $c_1 > 0$.
Since $\E L_t \le \sqrt{(2c+1)t + L_0^2}$ and $\E(1/L_{t-1})\ge 1/\E L_{t-1}$, it follows by induction that
\begin{align*}
	\E L_t \ge \sum_{i = 1}^{t-1}\frac{c_1}{\sqrt{(2c+1)i + L_0^2}} = \Theta(\sqrt{t}).
\end{align*}

\subsection{Proof of Theorem~\ref{thm:f_2-recurrent}}
Let $\cA_t$ be the event that $f \neq f_1$ at time $t$.
Clearly, $(\cA_t)_{t\ge 1}$ are i.i.d.

\subsubsection{Case $1$: $r = 1$}\label{sssec:f2-r1}
Recall that $\hat{L}_{k}$ denotes the number of leaves at time $\gt_{k}$, for $k\ge 1$. Moreover, $(\hat{L}_k)_k$ is an $\dN$-valued Markov Chain. Thus, it suffices to show that it is positive recurrent.

Note that, at any time if the delay is $1$ and $f_1$ is not chosen at that time, the number of leaves goes down by at least one. Thus
\begin{align*}
	\pr( \hat{L}_2 = \ell-1 \mid \hat{L}_1 = \ell) \ge  \pr(\xi=1, \cA_1)>0
\end{align*}
and similarly $\pr( \hat{L}_2 = \ell+1 \mid \hat{L}_1 = \ell)>0$ for all $\ell\ge1$. Thus, $\hat{L}_{k}$ is irreducible.

If $(\hat{L}_k)_{k\ge 1}$ is positive recurrent, there exists a sequence of regeneration times $(\gt_{j_l})_{l\ge 1}$ such that $\hat{L}_{j_l} = 1$.
It follows that all infinite paths in $G_\infty(f)$ pass through the vertices added at the times $(\gt_{j_l})_{l\ge 1}$, which in turn establishes the result.

We show that $(\hat{L}_k)_k$ is positive recurrent using Foster's Theorem.
As the Markov chain is time homogenous, it suffices to show the following.

\begin{lemma}
	There exists $\ell_{0} < \infty$ such that $\E(\hat{L}_2 - \hat{L}_1 \mid \hat{L}_1=\ell) \le - \eps$ for some $\eps > 0$, whenever $\ell \ge \ell_{0}$.
\end{lemma}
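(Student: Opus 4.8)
The plan is to verify Foster's drift condition by analysing a single regeneration cycle. Shift time so that $\gt_1=0$ is a regeneration time (permissible since $\gt_1$ is tight and $(\hat L_k)_k$ is time-homogeneous), and recall from Lemma~\ref{lem:reg-prop} that, conditionally, the delays are independent with $\hat\xi_t\sim(\xi\mid\xi\le t)$ and that for $t\in(0,\gt_2]$ every read index $t-\hat\xi_t$ lies in $[0,\gt_2)$. At each step the new vertex creates exactly one leaf and destroys those selected leaves that are still leaves at time $t-1$; writing $m_t$ for the number of such destroyed leaves gives the exact balance
\begin{align*}
	\hat L_2-\hat L_1=\gt_2-\sum_{t=1}^{\gt_2}m_t.
\end{align*}
Since $\E\gt_2=\E\gc_1=1/q$ is a constant independent of $\ell$, it suffices to show $\E(\sum_{t=1}^{\gt_2}m_t\mid \hat L_1=\ell)\ge 1/q+\eps$ for all large $\ell$.

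The engine is a per-step drift estimate. Letting $K$ denote the (independent) mixture index and abbreviating $L=L_{t-\hat\xi_t}$, $I=I_{t-1,t-\hat\xi_t}$, the chosen leaves form a uniform subset of the $L$ read-leaves, of which $L-I$ survive to time $t-1$; hence
\begin{align*}
	\E(m_t\mid\cF_{t-1})=\Bigl(1-\tfrac{I}{L}\Bigr)\,\E(K\wedge L).
\end{align*}
When $L$ is large one has $\E(K\wedge L)\ge 1+\pr(K\ge 2)$, while the collision factor $I/L$ is small in expectation, controlled using $\E\xi<\infty$ (for the $\hat\xi_t=1$ steps, which occur with positive frequency when $r=1$, one even has $I=0$ exactly). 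This yields $\E(1-m_t\mid\cF_{t-1})\le-\pr(K\ge 2)+o(1)$, the exact analogue of the $f_1$ computation in Theorem~\ref{thm:f_1-growth} but now with at least two deletions per non-$f_1$ step; this is the source of the strictly negative drift, and $\pr(K\ge 2)>0$ is used precisely here.

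To turn the per-step estimate into a cycle bound I exploit the decoupling of the delay and graph dynamics: conditioning on the whole delay sequence $\mvxi$ makes $\gt_2$ deterministic and fixes every read index, while the graph randomness remains. With $\gt_2$ now constant I may use the genuine stopping time $\sigma:=\inf\{t>0:L_t<\ell/2\}$: before $\sigma$ all reads see at least $\ell/2$ leaves, so $L_{t\wedge\sigma}+\pr(K\ge 2)(t\wedge\sigma)$, corrected by the accumulated collision terms, is a supermartingale, and optional stopping at the deterministic time $\gt_2$ gives, after averaging over $\mvxi$,
\begin{align*}
	\E\bigl(\hat L_2-\hat L_1\bigr)\le -\pr(K\ge 2)\,\E(\sigma\wedge\gt_2)+\frac{C}{\ell}\,\E\sum_{t=1}^{\gt_2}(\hat\xi_t-1)+\E\bigl[\gc_1\,\ind_{\{\sigma<\gt_2\}}\bigr],
\end{align*}
where the middle term is finite and vanishes as $\ell\to\infty$ since $\E\sum_{t=1}^{\gt_2}(\hat\xi_t-1)<\infty$ by renewal--reward together with $\E\xi<\infty$.

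The main obstacle is the last term, which accounts for the leaf count dipping below $\ell/2$ inside the cycle and then recovering by up to $\gt_2-\sigma$ steps. The key observation is that a drop below $\ell/2$ forces $\hat L_2<\ell/2+\gc_1$, a net decrease of order $\ell$ that dwarfs the $O(1)$ expected cycle length, so this event can only help the drift. Making this precise requires crediting the \emph{actual} (possibly large) magnitude of the downward jump at $\sigma$ rather than the uniform bound $\pr(K\ge 2)$ per step, which is the delicate point when the mixture has a heavy-tailed index $K$; the careful bookkeeping of these large excursions (and of the collision corrections above) is the technical heart of the argument. Once the recovery term is absorbed in this way the right-hand side is at most $-\pr(K\ge 2)+o(1)\le-\eps$ for all $\ell\ge\ell_0$, which is the desired inequality and lets Foster's theorem deliver positive recurrence of $(\hat L_k)_k$.
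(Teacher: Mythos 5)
Your architecture is sound and would deliver the lemma, but it is a genuinely different --- and substantially heavier --- route than the paper's, and the step you defer as ``the technical heart'' is precisely the step the paper's decomposition is designed to avoid. The paper splits the cycle increment as $\E(\hat{L}_2-\hat{L}_1\mid\hat{L}_1=\ell)=\E((L_{\gc_1}-\ell)_{+}\mid\cdot)-\E((L_{\gc_1}-\ell)_{-}\mid\cdot)$ and treats the two parts asymmetrically. For the positive part it needs no stopping time: writing the telescoping sum in terms of $L_t\vee\ell$ builds the localization ``$L_{t-1}\ge\ell$'' directly into each summand, the per-step probability of a leaf increase is bounded by $c/\sqrt{L_{t-1}}\le c/\sqrt{\ell}$ (by splitting the sum over delay values at a cutoff $k\approx\sqrt{L_{t-1}}$ --- the analogue of your collision estimate), and a H\"older inequality in $t$ against the tail of $\gc_1$ (whose $(1+\gd)$-moment is finite by Propositions~\ref{prop:gamma-tau} and~\ref{prop:tau-xi}) gives $O(\ell^{-1/(2p)})$. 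For the negative part the paper does not attempt a per-step drift at all: a single event of probability bounded away from zero (delay one and $f\ne f_1$ at the first step, ending the cycle with at least one fewer leaf) already gives a constant lower bound, which is all Foster's theorem requires. Your exact balance $\hat{L}_2-\hat{L}_1=\gt_2-\sum_t m_t$ and the identity $\E(m_t\mid\cF_{t-1})=(1-I/L)\,\E(K\wedge L)$ extract the quantitatively stronger drift $-\pr(K\ge 2)$ per step, which is more informative than what the paper proves.

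The cost of that strength is the part you leave open. Introducing $\sigma=\inf\{t:L_t<\ell/2\}$ obliges you to control the recovery term $\E[\gc_1\ind_{\{\sigma<\gt_2\}}]$ and, inside the supermartingale, the accumulated collision corrections. Both are fine when the mixture index $K$ has finite mean: Markov's inequality gives $\pr(\sigma<\gt_2)=O(1/\ell)$ and H\"older with $\E\gc_1^{1+\gd}<\infty$ kills the recovery term, while $\E I_{t-1,t-\hat\xi_t}\le \E K\cdot\E(\hat\xi_t-1)$ controls the collisions. But Theorem~\ref{thm:f_2-recurrent} allows an arbitrary mixture, so $K$ may be heavy-tailed; then a single step can delete many leaves, $I_{t-1,t-i}$ is no longer $O(i)$, and your ``credit the actual magnitude of the downward jump'' device is exactly where the argument still has to be written. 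The paper's $(\cdot)_{+}/(\cdot)_{-}$ split, with its deliberately crude one-event bound on the negative part, is what lets it bypass the excursion bookkeeping entirely; if you adopt that split you can keep your per-step deletion formula and discard the stopping time altogether.
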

\begin{proof}
	\ Recall that, $\gc_1=\gt_2-\gt_1$ is the gap between the first two consecutive regeneration times.
	Using the Markov structure, thus we have
	\begin{align*}
		&\E(\hat{L}_2 - \hat{L}_1 \mid \hat{L}_1=\ell)\\ & = \E(L_{\gc_1} - L_{0} \mid L_{0}=\ell, \cE_{0}) \\
		 & = \E((L_{\gc_1} - \ell)_{+} \mid L_{0}=\ell, \cE_{0}) - \E((L_{\gc_1} - L_{0})_{-} \mid L_{0}=\ell, \cE_{0}).
	\end{align*}
	We consider the two terms separately. First, we upper bound the term
	\begin{align*}
		 & \E((L_t\vee \ell - L_{t-1}\vee \ell)_{+} \mid L_{0}=\ell, \cE_{0}) \\
		 & \qquad
		= \pr(L_{t}-L_{t-1}=1, L_{t-1}\ge \ell\mid L_{0}=\ell, \cE_{0}).
	\end{align*}

	Recall from the proof of Theorem~\ref{thm:f_1-growth} that $I_{t, s}$ is the number of vertices which are leaves at time $s$ and are not leaves at time $t > s$.
	We also continue the notation from there $(\xi_{t}\mid \cE_{0})\sim \hat{\xi_t} \equald (\xi_t \mid \xi_t \le t)$; or equivalently, we use $\hat{\xi}_{t}$ when conditioning on the event that $0$ is a regeneration time.

	As in the proof of Theorem~\ref{thm:f_1-growth}, we have
	\begin{align}\label{eq:mdiff}
		\pr(L_{t} - L_{t-1} = 1 \mid \cF_{t-1}) = \sum_{i=1}^{t}\pr(\hat{\xi}_t = i)\cdot \frac{I_{t-1, t-i}}{L_{t-i}}.
	\end{align}
	Recall from the process dynamics that $L_{t-i}> L_{t-1} - i$ and that $I_{t-1, t-i} < i$.	We can substitute these bounds into~\eqref{eq:mdiff}, which yields
	\begin{align*}
		\pr(L_{t} - L_{t-1} = 1 \mid \cF_{t-1}) \le \sum_{i=1}^t \pr(\hat{\xi}_{t} = i) \cdot \left( \frac{i-1}{L_{t-1}-i+1} \wedge 1 \right).
	\end{align*}
	For any $k\in \{1,2,\ldots,t\}$, we can upper bound the rhs as
	\begin{align*}
		\frac{k-1}{L_{t-1} - k+1} + \pr(\hat{\xi}_t>k)\le \frac{k-1}{L_{t-1} - k+1} + \frac{\E\hat{\xi}_t}{k}.
	\end{align*}
	Note that $\E\hat{\xi}_t \le \sup_{s\ge 1}\E\hat{\xi}_{s}<\infty$. We can now optimize over $k$ by choosing $k$ to be the nearest integer to $\sqrt{L_{t-1}}$.
	Thus, for some universal constant $c\in (0,\infty)$, we have
	\[
		\pr(L_{t} - L_{t-1} = 1 \mid \cF_{t-1}) \le \frac{c}{\sqrt{L_{t-1}}}\text{ for all } t.
	\]

	It follows that
	\begin{align*}
		 & \E\left( (L_{t} \vee \ell - L_{t-1} \vee \ell)_+ \mid \cF_t,L_{0}=\ell, \cE_{0} \right) \\
		 & \qquad= \pr(L_{t}-L_{t-1}=1, L_{t-1}\ge \ell\mid L_{0}=\ell, \cE_{0}) \le c\;\ell^{-1/2}.
	\end{align*}

	We can now bound $\E((L_{\gc_1} - \ell)_{+} \mid L_{0}=\ell, \cE_{0})$.
	Indeed, we have
	\begin{align*}
		&\E((L_{\gc_1} - \ell)_{+} \mid L_{0}=\ell, \cE_{0})\\
		 & \le \sum_{t=1}^{\infty} \E((L_t\vee \ell - L_{t-1}\vee \ell)_{+}\cdot \ind_{\gc_1\ge t} \mid L_{0}=\ell, \cE_{0}) \\
		 & = \sum_{t=1}^{\infty} \pr(L_t - L_{t-1}=1, L_{t-1}\ge \ell, \gc_1\ge t \mid L_{0}=\ell, \cE_{0})
	\end{align*}
	Using H\"older's inequality for $p, q > 1, \frac1{p} + \frac1{q} = 1$, we get
	\begin{align*}
		\E((L_{\gc_1} - \ell)_{+} \mid L_{0}=\ell, \cE_{0})
		 & \le
		\left( c \ell^{-1/2} \right)^{1/p} \cdot \sum_{t=1}^\infty \pr(\gc_1 \ge t)^{1/q}\\
		&\le c^{1/p}\ell^{-1/(2p)}\sum_{i=1}^{\infty}\left(\E \gc_1^\gb/t^\gb \right)^{1/q},
	\end{align*}
	for $\gb > q > 1$ with $\E\gc_1^{\gb}<\infty$. Using Propositions~\ref{prop:gamma-tau} and~\ref{prop:tau-xi}, we have $\E\xi^{\gb}<\infty$ implies that $\E\gc_1^{\gb}<\infty$. Under our assumption that $\E\xi^{1+\gd}<\infty$ for some $\gd>0$, taking $\gb=1+\gd>q>1$, we get that
	\begin{align}\label{eq:ubd}
		\E((L_{\gc_1} - \ell)_{+} \mid L_{0}=\ell, \cE_{0})
		 & \le c_1\ell^{-1/(2p)}
	\end{align}
	for some constants $c_1>0, p>1$.

	Now, we bound $\E((L_{\gc_1} - \ell)_{-} \mid L_0 =\ell,\cE_{0}).$	Observe that when $\ell \ge 2$, we have
	\begin{align}\label{eq:lbd}
		\E((L_{\gc_1} - \ell)_{-} \mid L_0 =\ell,\cE_{0})
		\ge \pr(\xi_1 = 1, \cA_1) > 0
	\end{align}
	as when $f \neq f_1, \xi_1=1$ there is a decrease in the number of leaves.
	Combining the bounds~\eqref{eq:ubd} and~\eqref{eq:lbd}, for sufficiently large $\ell$, we have
	\begin{align*}
		\E(\hat{L}_2 - \hat{L}_1 \mid \hat{L}_1=\ell)
		 & \le c_1\ell^{-1/(2p)} - \pr(\xi_1 = 1, \cA_1) \le -\eps,
	\end{align*}
	for some constants $\eps > 0$. The result follows.
\end{proof}

\subsubsection{Case $2$: $r > 1$}
\label{sssec:f2-r}

We consider the Markov chain $X(t)$, given by the leaf geometry of the sequence $(G_t, G_{t+1}$, $\ldots, G_{t + r - 1})$.
Specifically, $X(t)$ takes values of $r$--tuples of finite DAGs with maximum path length $r$.
When $r = 1$, there is a natural bijection of this state space with the natural numbers $\dN$, which identifies the following analysis with the previous one.
One-endedness of the limit $G_\infty(f_2)$ follows again from the positive recurrence of $X_t$.

To see this, suppose that $L_t = r$ for some $t$ such that the interval $[t, t + r)$ is a regeneration interval.
We define an event of positive probability such that all the vertices in the regeneration interval $[t, t + r)$ satisfy the property that all vertices marked at least $t + r(r+1)$ have a path to each vertex in this regeneration interval. Hence, all vertices in this regeneration interval are confirmed in $G_\infty(f_2)$; and from the positive recurrence of $X(t)$ it follows that this event occurs infinitely often.

For convenience, we temporarily renumber the vertices in the window $[t, t + r(r+1))$ by $[1,2, \ldots, 1 + r(r+1))$. With positive probability, the function $f_1$ is not chosen in this interval.
When the number of leaves is at the minimum value $r$, the vertex number $1$ must have out-degree $1$; vertex number $2$ has out-degree $2$, and one of the vertices connected to by vertex number $2$ is also connected to by vertex number $1$.
We proceed similarly so that each of the $r$ leaves present when vertex $1$ is added is connected to by at least one of the vertices $1,2, \ldots, r$.
All other vertices $t$ connect to $t - r$ and $t - r - 1$.
It is obvious that since the last $r$ vertices correspond to a regeneration window, each of the first $r$ vertices is confirmed.
This is shown pictorially in Figure~\ref{fig:3}, for $r = 3$.
In the figure, we only draw $2$ outgoing edges for the vertices; as any $f \neq f_1$ adds at least 2 edges for each vertex whenever possible.
\begin{figure}[!ht]
    \centering
    \includegraphics[width=0.3\columnwidth,page=3]{GS.pdf}
    \caption{Confirmed blocks in a regeneration interval.}
    \label{fig:3}
\end{figure}

Thus, let $[t_i, t_i + r)$ be a sequence of disjoint regeneration intervals such that all vertices in each interval are confirmed in $G_\infty(f_2)$.
From the previous property of our event, any two infinite rays in $G_\infty(f_2)$ each pass through the regeneration intervals $[t_{2i}, t_{2i} + r)$ for all $i \in \dN$.
For $i \ge 2$, the vertices in the regeneration interval along the infinite rays each have a path to a vertex in the regeneration interval $[t_{2i - 1}, t_{2i-1} + r)$; which in turn has a path to vertices along the infinite paths contained in the regeneration interval $[t_{2(i-1)}, t_{2(i-1) + r})$.
This establishes one-endedness of the limit.

We now show that $X(t)$ is recurrent.
For a regeneration interval $[t, t + r)$, we define \[
V_{t} := \sum_{i = 0}^{r-1}\hat{L}_{t + i}.
\]
Once again, we apply Foster's theorem to get the required result. We will prove the following result.

\begin{lemma}
	For sufficiently large $\ell$, $\E(V_{\gt_2} - V_{\gt_1} \mid V_{\gt_1} =\ell) \le -\eps$ for some $\eps > 0$.
\end{lemma}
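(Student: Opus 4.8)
The plan is to verify the Foster--Lyapunov negative drift condition for the Lyapunov function $V$, closely paralleling the $r = 1$ argument of Section~\ref{sssec:f2-r1}. Writing $\gc_1 := \gt_2 - \gt_1$ for the gap between the two consecutive regeneration-interval starts (which has a finite $(1+\gd)$-th moment by Lemma~\ref{lem:reg-prop-r}), I would decompose the drift into its positive and negative parts,
\begin{align*}
	\E(V_{\gt_2} - V_{\gt_1} \mid V_{\gt_1} = \ell)
	= \E\bigl((V_{\gt_2} - \ell)_+ \mid \cE_0\bigr) - \E\bigl((V_{\gt_2} - \ell)_- \mid \cE_0\bigr),
\end{align*}
and bound the two terms separately, showing that the positive part is $o(1)$ in $\ell$ while the negative part stays bounded away from $0$.

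For the positive part, I would first observe that $V_{\gt_1} = \ell$ forces each of the $r$ leaf counts in the window to be $\Omega(\ell)$: since the leaf count changes by a bounded amount per time step and the window has fixed length $r$, the $r$ coordinates of $V_{\gt_1}$ differ from one another by at most a constant, so each is at least $\ell/r - O(r)$. I would then reuse the per-step growth estimate from the proof of Theorem~\ref{thm:f_1-growth}, namely $\pr(L_t - L_{t-1} = 1 \mid \cF_{t-1}) \le c/\sqrt{L_{t-1}}$, which now reads $O(\ell^{-1/2})$ throughout the gap as long as all the leaf counts remain $\Omega(\ell)$. Summing the per-step increase over the random gap $\gc_1$ and applying H\"older's inequality exactly as in~\eqref{eq:ubd} (using $\E\gc_1^{1+\gd} < \infty$) yields $\E((V_{\gt_2} - \ell)_+ \mid \cE_0) \le c_1 \ell^{-\kappa}$ for some $\kappa > 0$; tracking all $r$ coordinates simultaneously only costs a fixed multiplicative factor of $r$.

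For the negative part, I would exhibit a fixed, $\ell$-independent, positive-probability event on which $V$ strictly decreases by at least a constant. The natural choice is the event that $f \ne f_1$ is selected at some step of the window $[\gt_1, \gt_1 + r)$: there the new vertex attaches to at least two leaves, so the merge reduces the summed leaf count $V$ by at least one, provided enough distinct leaves are available (guaranteed once $\ell \ge 2r$). Since the events $(\cA_t)_t$ are i.i.d.\ and independent of the regeneration structure, this has probability bounded below by a constant $c_2 > 0$, giving $\E((V_{\gt_2} - \ell)_- \mid \cE_0) \ge c_2$. Combining, $\E(V_{\gt_2} - V_{\gt_1} \mid V_{\gt_1} = \ell) \le c_1 \ell^{-\kappa} - c_2 \le -\eps$ for all sufficiently large $\ell$, which is the claim, and positive recurrence of $X(t)$ then follows from Foster's theorem.

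The main obstacle I anticipate is the negative-part estimate. In the $r = 1$ case a single delay-$1$ step with $f \ne f_1$ instantly lowers the leaf count, but when $r > 1$ every vertex inside a regeneration interval attaches to the past (since $\xi \equiv r$ there), so one must track the net effect across the whole window and check that a leaf-merging event is genuinely compatible with the constraints $\xi_{\gt_1 + s} = r$ defining the regeneration interval, and that the decrement is not cancelled by simultaneous leaf creation elsewhere in the window. Making the choice of event precise --- ensuring that at least two of the $\Omega(\ell)$ available leaves are merged and that this decrease survives to the next regeneration-interval boundary --- is where the genuine bookkeeping lies; the positive-part bound, by contrast, is a routine adaptation of the $r = 1$ computation.
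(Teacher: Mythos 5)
Your overall strategy coincides with the paper's: the same decomposition of the drift into positive and negative parts, and the same bound $\E((V_{\tilde{\gt}_2}-V_{\tilde{\gt}_1})_+\mid\cdot)=O(\ell^{-\kappa})$ obtained by recycling the per-step estimate $\pr(L_t-L_{t-1}=1\mid\cF_{t-1})\le c/\sqrt{L_{t-1}}$ together with H\"older's inequality and the moment bound on $\tilde{\gc}_1$ from Lemma~\ref{lem:reg-prop-r}. That half of your argument is sound; the paper simply observes that the Case~1 computation carries over term by term.

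The gap is in the negative part, and it is precisely the difficulty you flag at the end but do not resolve. The event ``$f\ne f_1$ is selected at some step of the window'' does not force a decrease of $V$, for two reasons. First, a window vertex $t$ attaches to leaves of $G_{t-r}$, and the two leaves it selects there may already have been consumed by the vertices added during $(t-r,t)$; in that case the step \emph{increases} the current leaf count by one (the new vertex is a fresh leaf and nothing is removed). Your proviso that ``enough distinct leaves are available once $\ell\ge 2r$'' guards against having too few leaves, not against staleness of the chosen ones. Second, even when the single merging step does net $-1$, the other $r-1$ window steps are uncontrolled and each contributes $0$ or $+1$, so the window need not show any net decrease (for $r\ge 3$ it can net a strict increase). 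The paper closes both holes with an explicit event $\cO$: \emph{all} $r$ window vertices select $f\ne f_1$ and choose pairwise \emph{disjoint} pairs from the $\ell-r$ \emph{oldest} leaves --- which are guaranteed to be present and still unconsumed at every step of the window --- so that every step nets $-1$ and the window nets $-r$, with probability bounded below uniformly in $\ell\ge 3r$. Some construction of this kind is indispensable; without it the negative part is not bounded away from zero and the Foster drift condition does not follow.
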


Note that there are only finitely many states such that $V_{\gt_1} < k$ for any $k$.

\begin{proof}{Proof.}
	Similarly to the proof in Subsection~\ref{sssec:f2-r1}, we express the expectation into its positive and negative components,
	\begin{align*}
	&\E(V_{\tilde{\gt}_{2}} - V_{\tilde{\gt}_1} \mid V_{\tilde{\gt}_1} =\ell)\\
        &= \E((V_{\tilde{\gt}_{2}} - V_{\tilde{\gt}_1})_{+} \mid V_{\tilde{\gt}_1} =\ell) - \E((V_{\tilde{\gt}_{2}} - V_{\tilde{\gt}_1})_{-} \mid V_{\tilde{\gt}_1} =\ell).
	\end{align*}
We now analyze each one separately.
For the positive component, the proof and conclusions in Case $1$ hold here, which can be seen by noting that for the sums in the previous proof, $(\sum_{i=r}^{t}\cdot)_{+} \le \sum_{i=1}^{t}(\cdot)_{+}$, and the same for infinite sums.

We analyze the negative component as follows.
	Observe that when $\ell \ge 2r + r$, we have
	\begin{align*}
	&\E((V_{\tilde{\gt}_{2}} - V_{\tilde{\gt}_1})_{-} \mid V_{\tilde{\gt}_1} =\ell)\\
		 &\ge \pr\bigl( \xi_r = r, \bigcap_{i=1}^r \cA_{i}, \cO \ \bigl|\  V_{0}=\ell, \tilde{\gt}_{1}=0\bigr) \ge \eps > 0.
	\end{align*}
	Here the event $\cO$ given $\tilde{\gt}_1=0$ is as follows.
	Number all the $\ell \ge 2r + r$ leaves present in $G_{0}$ by $1,2, \ldots, \ell$ in order of oldest-to-newest.
	Then $\cO$ is the event that each vertex added in the interval $[0,r)$ chooses a disjoint pair of leaves from the set $1,2, \ldots, \ell - r$.
	Notice that the probability of this event is non-decreasing in $\ell \ge 2r + r$.
	
	The remainder of this proof follows exactly as for Case $1$ in Section~\ref{sssec:f2-r1}.
\end{proof}

\subsection{Proof of Theorem~\ref{thm:state-varying}}
We observe, using the results of Theorems~\ref{thm:f_1-growth} and~\ref{thm:f_2-recurrent}, that
\begin{align*}
	\frac{c_1}{\sqrt{l}} - \frac{k\ga}{\sqrt{{l}}}
	\le \E(L_{t+1} - L_t \mid L_t = l) \le \frac{c_2}{\sqrt{l}}
	- \frac{k\ga}{\sqrt{{l}}},
\end{align*}
for appropriate positive constants $c_1 < c_2$.

Define $c'_1 = c_1 - k \ga$ and $c'_2 = c_2 - k \ga$.
Using the identity $a^{3/2} - b^{3/2} = (\sqrt{a} - \sqrt{b})(a+b+\sqrt{ab})$, we observe that
\begin{align*}
	L_{t+1}^{3/2} - L_{t}^{3/2} \approx (L_{t+1}+L_{t})\cdot \frac{L_{t+1} - L_{t}}{\sqrt{L_{t+1}} + \sqrt{L_{t}}}.
\end{align*}
Conditioning on $L_t = l$, it follows that
\begin{align*}
	c'_1 \le \E(L_{t+1}^{3/2} - L_t^{3/2} \mid L_t = l) \le c'_2.
\end{align*}
Since for any $l$, it occurs with positive probability that $L_t = l$ and $L_{t+1} = 1$, the result follows from Foster's theorem when $k\ga > c_2$; the result is obvious when $k\ga < c_1$.

\subsection{Proof of Theorem~\ref{thm:commuting-diagram}}
The convergence in time is an immediate consequence of one-endedness and we omit the proof for brevity.
The remainder of the result may be expressed as the following lemmas:

\begin{lemma}
	For any time $t \ge 0$, $\lim_{k\to\infty} G_{t}(f_k) = G_t(f_\infty)$ a.s.~when the driving sequences for each function are coupled.
\end{lemma}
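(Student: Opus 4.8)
The plan is to exploit the fact that for a fixed $t$ the graph $G_t(f_\infty)$ has at most $t+1$ vertices, hence at most $t+1$ leaves at any intermediate instant, and to upgrade the claimed convergence to \emph{eventual exact equality}. Concretely, set
$k_t := \max_{0\le s\le t} L(G_s(f_\infty))$,
the largest number of leaves ever present in the coupled $f_\infty$-process over the window $\{0,1,\dots,t\}$; since $G_t(f_\infty)$ has at most $t+1$ vertices we have $k_t\le t+1<\infty$ surely. I would then prove the key claim that for every $j\ge k_t$ one has $G_s(f_j)=G_s(f_\infty)$ for all $0\le s\le t$ under the coupling of the driving sequences $\mvxi,\mvtheta$.

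I would establish this claim by induction on $s$. The base case $s=0$ is immediate, since both processes start from the single root $G_0$. For the inductive step, assume $G_{s'}(f_j)=G_{s'}(f_\infty)$ for all $s'<s$ (with $s\le t$). Because $\xi_s\ge 1$ we have $(s-\xi_s)_+\le s-1<s$, so the inductive hypothesis gives $G_{(s-\xi_s)_+}(f_j)=G_{(s-\xi_s)_+}(f_\infty)$; in particular the two processes see \emph{the same} past graph, whose leaf set has size $L(G_{(s-\xi_s)_+}(f_\infty))\le k_t\le j$. Since this graph has at most $j$ leaves, the construction $f_j$ selects \emph{all} of them by definition, irrespective of the driving value $\theta_s$, which is exactly the set chosen by $f_\infty$. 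Hence the edges appended at time $s$ coincide, yielding $G_s(f_j)=G_s(f_\infty)$ and closing the induction.

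Given the claim, the lemma follows at once: for every $k\ge k_t$ we have $G_t(f_k)=G_t(f_\infty)$ exactly, so $d_*\bigl(G_t(f_k),G_t(f_\infty)\bigr)=0$, and therefore $\lim_{k\to\infty}G_t(f_k)=G_t(f_\infty)$ almost surely. Note this gives eventual exact equality of the finite graphs, which is strictly stronger than convergence in $(\cB_*,d_*)$ and in particular implies the asserted a.s.\ convergence.

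There is essentially no hard step here; the only points requiring care are (i) the a.s.\ finiteness of $k_t$, which is immediate from the deterministic bound $k_t\le t+1$, and (ii) making the coupling precise, namely that in the ``at most $j$ leaves'' regime the random $j$-subset chosen by $f_j$ degenerates to the full leaf set and so ceases to depend on $\theta_s$, matching $f_\infty$. The one place to watch the bookkeeping is that the two processes' leaf sets must agree at the \emph{past} index $(s-\xi_s)_+$ rather than at $s$ itself; this is guaranteed precisely because $\xi_s\ge 1$ forces that index to be strictly smaller than $s$, so the inductive hypothesis applies.
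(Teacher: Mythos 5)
Your proposal is correct and follows essentially the same route as the paper: the paper's (one-line) proof likewise observes that the graphs up to time $t$ have fewer than $t+1$ leaves, so for $n>t$ the construction $f_n$ degenerates to choosing the full leaf set and the coupled processes coincide exactly, $G_t(f_n)=G_t(f_\infty)$. Your version merely makes the underlying induction on $s$ explicit; there is no substantive difference.
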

\begin{proof}{Proof.}
	For any time $t$, the sequence of DAGs $(G_i(f_n))_{1 \le i \le t}$ have strictly less than $t+1$ leaves; hence the DAGs $(G_t(f_n))_{n > t}$ are all equal to $G_t(f_\infty)$; the result follows.
\end{proof}

\begin{lemma}
	$\lim_{k\to\infty} d_*(G_\infty(f_k), G_\infty(f_\infty)) = 0$ a.s.~when the driving sequences for each function are coupled.
\end{lemma}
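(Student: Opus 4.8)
The plan is to exploit the coupling observation already isolated above: at a \emph{special time} $t$ (a regeneration time at which $G_t(f_\infty)$ has a single leaf), the non-decreasing quantity $k_t := \max_{0 \le u \le t} L_u$ satisfies $G_t(f_j) = G_t(f_\infty)$ for every $j \ge k_t$, since whenever the relevant graph $G_{(s-\xi_s)_+}$ has at most $k_t \le j$ leaves the rule $f_j$ deterministically selects all of them, exactly as $f_\infty$ does. I would first record that there are infinitely many special times. By Corollary~\ref{cor:regeneration-fixed-gaps} there are infinitely many pairs of consecutive regeneration times $t,t+1$ (I take $r=1$; the general $r$ case uses consecutive regeneration intervals and is identical). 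Since $t$ being a regeneration time forces $\xi_{t+1}=1$, vertex $t+1$ attaches under $f_\infty$ to every leaf of $G_t$, so $G_{t+1}(f_\infty)$ has the single leaf $t+1$ at the regeneration time $t+1$; hence $t+1$ is special. These special vertices $v$ are precisely the anchor vertices of $G_\infty(f_\infty)$ furnished by Lemma~\ref{lem:finf-inf}.

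The second ingredient is that each special vertex $v$ occurring at a special time $t$ is a genuine \emph{cut vertex} in every process $G_\infty(f_j)$ with $j \ge k_t$ (and in $G_\infty(f_\infty)$). Indeed, for such $j$ we have $G_t(f_j)=G_t(f_\infty)$, so $v$ is the unique leaf at the regeneration time $t$; vertex $t+1$ then attaches to $v$ (its only choice), and the edge $(t+1,v)$ is the only edge joining a post-$t$ vertex to $G_t$, because every vertex of $G_t$ other than $v$ already has positive in-degree and in-degrees never decrease. Consequently, removing $v$ disconnects all vertices born after time $t$ from the root, so in the undirected hop metric every post-$t$ vertex, and every edge emitted after time $t$, lies at distance at least $d(v,\mathrm{root})$ from the root. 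Since the special vertices are distinct anchors on the single end of $G_\infty(f_\infty)$ — each later anchor has a directed path to each earlier one — their distances to the root strictly increase and therefore diverge.

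With these two facts the convergence is routine. Fix a radius $R$. Choose a special time $t$ so large that its anchor $v$ satisfies $d(v,\mathrm{root})>R$; the cut-vertex property applied to the $f_\infty$ process then freezes the $R$-ball after time $t$, so the $R$-balls of $G_t(f_\infty)$ and $G_\infty(f_\infty)$ coincide. Set $K:=k_t$. For every $k \ge K$ we have $G_t(f_k)=G_t(f_\infty)$, and the same cut-vertex property, now applied in the $f_k$ process (whose delays are coupled to those of $f_\infty$ beyond time $t$, and whose attachment at $t+1$ is again forced to be $v$), shows that no vertex or edge created after time $t$ meets the $R$-ball of $G_\infty(f_k)$. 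Chaining the equalities yields that the $R$-balls of $G_\infty(f_k)$ and $G_\infty(f_\infty)$ agree, whence $d_*(G_\infty(f_k),G_\infty(f_\infty)) \le \tfrac{1}{1+R}$ for all $k \ge K$. As $R$ is arbitrary, the left-hand side tends to $0$ almost surely.

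I expect the main obstacle to be the bookkeeping in the cut-vertex step: verifying in the \emph{undirected} hop metric that the single leaf $v$ separates the post-$t$ vertices from the root, and that all attachment points used after time $t$ stay outside the frozen ball. This is exactly where the identity $G_t(f_k)=G_t(f_\infty)$ and the monotonicity of in-degrees carry the argument; the remaining steps are only a matter of choosing $t$ large enough and invoking the divergence of the anchor distances.
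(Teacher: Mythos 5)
Your proof is correct, and for the convergence in $k$ it takes a genuinely different route from the paper's. Both arguments share the same engine: at a special time $t$ the quantity $k_t=\max_{u\le t}L_u(f_\infty)$ controls the coupling, so that $G_t(f_j)=G_t(f_\infty)$ for all $j\ge k_t$, and both rely on the almost sure existence of infinitely many special times via Corollary~\ref{cor:regeneration-fixed-gaps}. The paper then goes quantitative: it introduces the leaf maxima $M_i$ and the inter-anchor hop distances $D_i$ over consecutive special times $\gk_i$, bounds $\sup_{j\ge k}d_*(G_\infty(f_j),G_\infty(f_\infty))$ by $1/(1+D_1+\cdots+D_h)$ on the event $\max_{1\le l\le h}M_l\le k$, and controls the two error probabilities by a union bound and Chebyshev's inequality; this is exactly where the hypothesis $\E\xi^{1+\delta}<\infty$ enters, through the moments of $\gk_2-\gk_1$. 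You instead argue pathwise: the single leaf $v$ at a special time is a cut vertex of every coupled process $G_\infty(f_j)$ with $j\ge k_t$ (the only pre-$t$/post-$t$ edge being $(t+1,v)$, since every other pre-$t$ vertex already has positive in-degree and, $t$ being a regeneration time, all later references point to graphs $G_u$ with $u\ge t$), so the $R$-ball around the root freezes at time $t$ once $d(v,\mathrm{root})>R$, and the anchor distances increase deterministically along the infinite sequence of special times. This removes the concentration step entirely: on the single almost-sure event that there are infinitely many special times you get $\sup_{j\ge K}d_*\le 1/(1+R)$ for the finite $K=k_t$, for every $R$. What your version buys is a shorter argument needing no moment assumption beyond what already guarantees the regeneration structure, whereas the paper's proof as written invokes $\E\xi^{1+\delta}<\infty$ even for the almost sure statement; what the paper's version buys is an explicit rate, namely a bound on $\pr(\sup_{j\ge k}d_*>\eps_k)$ in terms of the tail of $\gk_2-\gk_1$. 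Two small points to tighten: since $d_*$ is built on undirected hop balls, you should state explicitly that $d(w,\mathrm{root})=d(w,v)+d(v,\mathrm{root})$ for every post-$t$ vertex $w$ and that no simple undirected path between two pre-$t$ vertices can visit a post-$t$ vertex (it would have to traverse the bridge $(t+1,v)$ twice), which is what guarantees the ball truly freezes; and for $r>1$ the cut vertex becomes a cut set of $r$ vertices spanning a regeneration interval, so ``identical'' is slightly optimistic, though the paper is equally brief on that case.
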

\begin{proof}{Proof.}
	 We call a time $t$~\emph{special} if
	\begin{enumerate}
		\item $[t, t + r)$, and $[t + r, t + 2r)$ are regeneration intervals, and
		\item $|\cL(G_t(f_\infty))| = r$.
	\end{enumerate}

	From the same argument in the $r > 1$ case in the proof of Theorem~\ref{thm:f_2-recurrent}, there exist, almost surely, infinitely many special times $(\gk_i)_{i\ge 1}$ with $0 \le \gk_1 <\gk_2< \cdots$.

	Let $M_i := \min_{t \in [\gk_i, \gk_{i+1}]}|\cL(G_t(f_\infty))|$, and let $D_i$ be the hop distance between the vertices added at times $\gk_i$ and $\gk_{i+1}$.
	Both $(M_i)_i$ and $(D_i)_i$ are sequences of positive finite integer-valued random variables with infinite support. Furthermore it is clear that both $M_i , D_i \le \gk_{i+1} - \gk_{i}$.

	Assume, without loss of generality, that $0$ is a special time. For any $i$, we have that $\max_{t \le \gk_i}|\cL(G_t(f_\infty))| \le \max_{1 \le k \le i}M_k$; furthermore we have that the hop distance of the $\gk_i$-th vertex from the root is given by $D_1 +D_2+ \cdots + D_i$.

	The key step of this proof is the fact that $\max_{t \le \gk_i}|\cL(G_t(f_\infty))| \le k$ implies that for all $j \ge k$, and with coupled delays, $G_t(f_j) = G_t(f_\infty)$ for times $t \le \gk_i$.
	In particular, we have that
	\begin{align*}
		&\sup_{j \ge k}d_*(G_\infty(f_j), G_\infty(f_\infty)) \\&\le \frac1{1 + D_1 +D_2+ \cdots + D_h}\text{ whenever } \max_{1 \le l \le h}M_l \le k.
	\end{align*}
	Thus, for any $\eps>0$ and positive integer $h$, we have
	\begin{align*}
		 & \pr(\sup_{j \ge k}d_*(G_\infty(f_j), G_\infty(f_\infty))>\eps/(1+\eps)) \\
		 & \qquad\le \pr(\max_{1 \le l \le h}M_l > k) + \pr(D_1 +D_2+ \cdots + D_h <1/\eps).
	\end{align*}
 The remainder of the result is an application of concentration inequality.
	
	First, we assume that $\E\xi^2 < \infty$; this can be easily relaxed.
	We want to choose $h$ such that $h\E D_1>1/\eps$, say $h\E D_1=2/\eps$. We have, by Chebyshev's inequality
	\begin{align}\label{eq:62}
		 \pr\left(\sum_{j = 1}^h D_j < 1/\eps\right)
		&= \pr\left(\sum_{j = 1}^h (\E D_1-D_j) > \frac12h\E D_1\right) 			\notag\\&\le \frac{4\var(D_1)}{(\E D_1)^2}\cdot \frac1h,
	\end{align}
	whenever $\E D_1^2 < \infty$ and
	\begin{align*}
		\pr(\max_{1 \le l \le h}M_l > k)
        &= 1-(1-\pr(M_1>k))^{h}\\&\le h\pr(M_1>k)\le h\pr(\gk_2-\gk_1>k).
	\end{align*}
	We choose $h\approx \pr(\gk_2-\gk_1>k)^{-1/2}$ and $\eps_{k}=2/(h\E D_{1}) \approx \sqrt{\pr(\gk_2-\gk_1>k)}$ to get the bound
	\begin{align*}
	    &\pr(\sup_{j \ge k}d_*(G_\infty(f_j), G_\infty(f_\infty))>\eps_{k}/(1+\eps_{k}))\\&
	\le \text{constant}\cdot \sqrt{\pr(\gk_2-\gk_1>k)}.
		\end{align*}
	Thus the a.s.~convergence result follows when $\E D_{1}^2 < \infty$.

	Note that if two consecutive times are regeneration times, the second is special. Hence $\gk_{2} - \gk_1$ is bounded by geometric many i.i.d.~sum of $\tilde{\gc}_{i}$'s.
	Clearly, the  condition $\E D_{1}^2 < \infty$ holds when $\E(\gk_2 - \gk_1)^2 < \infty$.
	From Propositions~\ref{prop:gamma-tau} and~\ref{prop:tau-xi}, this holds when $\pr(\xi = 1) > 0$ and $\E\xi^2 < \infty$.

	More generally, if $\E\xi^{1 + \delta} < \infty$ for some $\delta > 0$, we have $\E(\gk_2 - \gk_1)^{1+\gd} < \infty$. Moreover, we can obtain a bound of the order $O(h^{-\delta})$ in equation~\eqref{eq:62} and the rest of this proof follows by taking $h\approx \pr(\gk_2-\gk_1>k)^{-1/(1+\gd)}$.
	Note that for convergence in distribution, we only require finiteness of the first moment of $\xi$.
\end{proof}

\bibliographystyle{plain}
\bibliography{acm}

\end{document}